\definecolor{dblue}{rgb}{0,0,0.7}
\newtheoremstyle{mythm}{11pt}{11pt}{\it\color{dblue}}{}{\bf\color{dblue}}{.}{\newline}{}
\theoremstyle{mythm}
\newtheorem{thm}{Theorem}
\newtheorem{prop}[thm]{Proposition}
\newtheorem{cor}[thm]{Corollary}
\newtheorem{lem}[thm]{Lemma}
\newtheorem*{fullparityconj}{Full Parity Conjecture}
\newtheorem*{ellparityconj}{$\ell$-Parity Conjecture}
\font\tencyr=wncyr10 \def\russe{\tencyr\cyracc} 
\font\sevencyr=wncyr7 \def\russesmall{\sevencyr\cyracc} 
\def\Sha{\text{\russe{Sh}}} 
\def\smallSha{\text{\russesmall{Sh}}}
\DeclareMathOperator{\divi}{div}
\DeclareMathOperator{\coker}{coker}
\DeclareMathOperator{\rank}{rank}
\DeclareMathOperator{\corank}{corank}
\DeclareMathOperator{\ord}{ord}
\DeclareMathOperator{\Gal}{Gal}
\DeclareMathOperator{\Sel}{Sel}
\DeclareMathOperator{\Aut}{Aut}
\DeclareMathOperator{\fl}{fl}
\DeclareMathOperator{\id}{id}
\DeclareMathOperator{\cond}{cond}
\DeclareMathOperator{\Spec}{Spec}
\newcommand{\FF}{\mathbb{F}}
\newcommand{\QQ}{\mathbb{Q}}
\newcommand{\ZZ}{\mathbb{Z}}
\newcommand{\cyclic}[1]{{}^{\ZZ}\!/\!{}_{#1 \ZZ}}
\newcommand{\sss}{\scriptscriptstyle}
\newcommand{\qE}{q_{\sss E}}
\newcommand{\eLK}{e_{\sss L/K}}
\newcommand{\zV}{z_{\sss V}}
\newcommand{\Hfl}{H_{\fl}}
\newcommand{\Hflc}{H_{\fl,c}}
\newcommand{\phiSha}{\phi_{\smallSha}}
\newcommand{\hatphiSha}{\hat{\phi}_{\smallSha}}
\newcommand{\oomega}{\underline{\omega}}
\begin{document}

\title{Parity conjectures for elliptic curves over global fields of positive characteristic}

\author{Fabien Trihan and Christian Wuthrich}


\maketitle

\begin{abstract}
 We prove the $p$-parity conjecture for elliptic curves over global fields of characteristic $p > 3$. We also present partial results on the $\ell$-parity conjecture for primes $\ell\neq p$.
\end{abstract}

%
%

\section{Introduction}

Let $K$ be a global field and let $E$ be an elliptic curve defined over $K$. The conjecture of Birch and Swinnerton-Dyer asserts that the rank of the Mordell-Weil group $E(K)$ is equal to the order of vanishing of the Hasse-Weil $L$-function $L(E/K,s)$ as $s=1$. A weaker question is to know whether these two integers have at least the same parity. This seems more approachable because the parity of the order of vanishing on the analytic side can by expressed in more algebraic terms through local root numbers -- at least when the $L$-function is known to have an analytic continuation. Let $w(E/K)\in \{\pm 1\}$ be the global root number of $E$ over $K$ which is equal to the product of local root numbers $\prod_v w(E/K_v)$ as $v$ runs over all places in $K$. The local terms $w(E/K_v)$ were defined by Deligne without reference to the $L$-function, see~\cite{rohrlich_wd} for the definition.
So we can formulate the following conjecture.
\begin{fullparityconj}
 We have $(-1)^{\rank E(K)}  = w(E/K)$.
\end{fullparityconj}
This conjecture is unproven except for specific cases. We will focus on the following easier question. Let $\Sha(E/K)$ be the Tate-Shafarevich group defined as the kernel of the localisation maps
$H^1(K,E) \to \prod_{v} H^1(K_v,E)$ in Galois cohomology.
For a prime $\ell$, the $\ell$-primary Selmer group $\Sel_{\ell^{\infty}}(E/K)$ fits into an exact sequence
\begin{equation}\label{mwselsha_eq}
 0 \to E(K)\otimes {}^{\QQ_{\ell}}\!/\!{}_{\ZZ_{\ell}} \to \Sel_{\ell^{\infty}}(E/K) \to \Sha(E/K)[\ell^{\infty}] \to 0.
\end{equation}
If the characteristic of $K$ is prime to $\ell$, we may define it as the preimage of $\Sha(E/K)[\ell^{\infty}]$ under the map $H^1(K,E[\ell^\infty])\to H^1(K,E)[\ell^{\infty}]$. If the characteristic is equal to $\ell$, then one should use flat instead of Galois cohomology, see section~\ref{selmer_sec} for definitions. The theorem of Mordell-Weil shows that the dual of $\Sel_{\ell^{\infty}}(E/K)$ is a finitely generated $\ZZ_{\ell}$-module, whose rank we will denote by $r_{\ell}$. In particular, \eqref{mwselsha_eq} is a short exact sequence of finite cotype $\ZZ_l$-modules for any prime $\ell$. Since it is conjectured that $\rank E(K) = r_{\ell}$, we can make the following conjecture, which seems more approachable as it links two algebraically defined terms.
\begin{ellparityconj}
 Let $\ell$ be a prime.
 We have $(-1)^{r_{\ell}} = w(E/K)$.
\end{ellparityconj}

These conjectures have attracted much attention in recent years and the $\ell$-parity conjecture is now known in many cases, in particular when the ground field is $K=\QQ$ by work of the Dokchitser brothers~\cite{dok_isogeny, dok_nonab, dok_reg, dok_modsquares, dok_09},  Kim~\cite{kim}, Mazur and Rubin~\cite{mazur_rubin}, Nekov\'{a}\v{r}~\cite{nekovar2,nekovar3,nekovar4}, Coates, Fukaya, Kato, and Sujatha~\cite{cfks} and others.

In this article, we restrict our attention to the case of positive characteristic. So, we suppose from now on that $K$ is a global field of characteristic $p>3$ with constant field $\FF_q$. 
The main result of this article is the following theorem.
\begin{thm}\label{pparity_thm}
 The $p$-parity conjecture holds for any elliptic curve $E$ over a global field $K$ of characteristic $p>3$.
\end{thm}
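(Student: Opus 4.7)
The plan is to reduce the statement to Iwasawa theory along the arithmetic $\ZZ_p$-extension $K_\infty/K$ obtained by extending the constant field $\FF_q$ by its unique $\ZZ_p$-extension. Write $\Gamma = \Gal(K_\infty/K) \cong \ZZ_p$, $\Lambda = \ZZ_p[[\Gamma]]$, and let $X = \Sel_{p^\infty}(E/K_\infty)^{\vee}$ denote the Pontryagin dual of the flat $p$-primary Selmer group over $K_\infty$. By the work of Kato and Trihan in this setting, $X$ is expected to be a finitely generated torsion $\Lambda$-module with a well-defined characteristic element $f_X \in \Lambda$.

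First I would establish a control theorem: the natural restriction map $\Sel_{p^\infty}(E/K) \to \Sel_{p^\infty}(E/K_\infty)^{\Gamma}$ has finite kernel and cokernel, up to contributions at primes of bad reduction which can be made explicit in terms of the component groups of the Néron model and local Tamagawa-type factors. Combined with an analysis of the coinvariants $X_\Gamma$, this identifies the parity of $r_p$ with that of $\ord_T f_X$ (for $T = \gamma - 1$ with $\gamma$ a topological generator of $\Gamma$), up to an even correction coming from the local terms at the bad places.

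Next I would invoke the main conjecture of Iwasawa theory for $E$ over $K_\infty$ to identify $f_X$, up to a unit, with a geometric $p$-adic $L$-function. Because $K_\infty$ is a constant field extension, this $p$-adic $L$-function is essentially the characteristic polynomial of geometric Frobenius acting on a suitable crystalline or syntomic cohomology group associated to $E$ over the smooth projective model of $K$. It therefore inherits a functional equation from the Hasse--Weil $L$-function $L(E/K,s)$, with sign precisely the global root number $w(E/K)$. Equating the orders of vanishing on the two sides of this algebraic functional equation yields the congruence $\ord_T f_X \equiv \varepsilon \pmod 2$ for an explicit $\varepsilon \in \{0,1\}$ determined by $w(E/K)$.

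Putting the control theorem and the functional equation together gives $(-1)^{r_p} = w(E/K)$, provided that the local corrections arising from the control theorem match, place by place, the local root numbers $w(E/K_v)$ from Rohrlich's tables. I expect this last compatibility to be the principal obstacle: it requires a careful flat-cohomological duality for $E[p^\infty]$ on the Néron model, particularly at places of additive reduction, and is the step that makes serious use of the hypothesis $p>3$, under which the Kodaira--Néron classification is well-behaved and the local terms can be computed uniformly.
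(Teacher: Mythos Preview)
Your proposal is a plan rather than a proof, and the route it sketches is genuinely different from the paper's.  The paper never touches Iwasawa theory; instead it exploits the canonical degree-$p$ isogeny $F\colon E\to E'$ (Frobenius) and its dual $V$.  After reducing to the semistable case via Ulmer's non-vanishing theorem, it computes both sides of the parity statement as the \emph{same} product of local signs $\sigma(E/K_v)=\pm 1$ measuring whether $z\bigl(V\colon E'(K_v)\to E(K_v)\bigr)$ is a square: the local root-number identity $w(E/K_v)=\bigl(\tfrac{-1}{L_w/K_v}\bigr)\sigma(E/K_v)$ is checked reduction-type by reduction-type, and the global identity $p^{r_p}\equiv\prod_v z(V_{E'(K_v)})\pmod{\square}$ comes from a Cassels--Tate argument together with a flat-cohomology Euler-characteristic computation for $\mathcal{E}[F]$ over an open $U\subset C$.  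The product formula for norm-residue symbols then makes the two global products match.

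Your Iwasawa-theoretic outline, by contrast, has at least three substantive gaps that you have not closed.  First, a control theorem for the \emph{flat} $p^{\infty}$-Selmer group along the constant $\ZZ_p$-extension in characteristic $p$ is not standard; the usual Mazur argument is for $\ell\neq p$, and in the $\ell=p$ case the local terms involve formal groups and connected finite flat group schemes whose contributions are not obviously finite, let alone of controllable parity.  Second, you invoke ``the main conjecture'' to transport the functional equation from $L(E/K,s)$ to $f_X$, but what you need is an equality of characteristic elements up to a unit in $\Lambda$, together with a precise algebraic functional equation for the $p$-adic $L$-function along the cyclotomic variable with sign exactly $w(E/K)$; neither is supplied, and the results of Kato--Trihan you cite establish the equivalence of BSD with finiteness of $\Sha$, not this Iwasawa-theoretic statement.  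Third, you yourself flag the matching of the control-theorem corrections with the local root numbers as ``the principal obstacle'' and leave it open---but this is exactly the content of the theorem, and the paper's direct approach via $F$ and $V$ shows that the honest work lives precisely in that local comparison.  In short, the strategy is not absurd, but as written it defers every hard step; the paper's isogeny-descent argument is both more elementary and complete.
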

The proof consists of two steps: first a local calculation linking the local root number to local data on the Frobenius isogeny on $E$, carried out in section~\ref{local_sec}; followed by the use of global duality in section~\ref{duality_sec}. Luckily, we do not have to treat all individual cases of bad reduction for the local considerations, since we are able to use a theorem of Ulmer~\cite{ulmer_gnv} to reduce to the semistable case. This is done in section~\ref{red_ss_sec}.

The proof follows closely the arguments in~\cite{dok_isogeny} and Fisher's appendix of~\cite{dok_nonab}. We repeat it here in details, both for completeness and to make the reader aware of a few subtleties; for instance, it is to note that the Frobenius isogeny $F$ and its dual $V$ do not play an interchangeable role.

The hardest part concerns the global duality. The relevant dualities that we need for our conclusion have never appeared in the literature and we are forced to prove them. We think that it is worthwhile to include in section~\ref{pparity_sec} a general formula for the parity of the corank of the $p$-primary Selmer group and a local formula for the root number in section~\ref{rootno_sec}.

Originally, global dualities have appeared in Cassels' work~\cite{cassels} on the invariance under isogenies of the conjecture of Birch and Swinnerton-Dyer.  It should be noted that one could use our duality statements to prove this in the case of characteristic $p>0$, but there is no need for this. In fact it is known by~\cite{kato_trihan} that the conjecture of Birch and Swinnerton-Dyer is equivalent to the finiteness of the Tate-Shafarevich group -- and it is clear that the latter question is invariant under isogeny.

The second main result of this paper is a proof of the $\ell$-parity conjecture when $\ell\neq p$ in some cases. Write $\mu_\ell$ for the $\ell$-th roots of unity.

\begin{thm}\label{ellparity_thm}
 Let $E/K$ be an elliptic curve and let $\ell> 2$ be a prime different from $p$.
 Furthermore assume that 
 \begin{enumerate}
  \item $a=[K(\mu_{\ell}):K]$ is even, and 
  \item the analytic rank of $E$ does not grow by more than $1$ in the constant quadratic extension $K\cdot \FF_{q^2}/K$.
 \end{enumerate}
 Then the $\ell$-parity conjecture holds for $E/K$.
\end{thm}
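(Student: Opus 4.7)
The plan is to deduce the $\ell$-parity conjecture for $E/K$ from the just-established $p$-parity conjecture (Theorem~\ref{pparity_thm}) via a quadratic twist argument. The key structural fact underpinning the hypotheses is that $K(\mu_\ell)/K$ is always a constant-field extension, equal to $K\cdot\FF_{q^a}$ with $a=[K(\mu_\ell):K]$; when $a$ is even, the unique quadratic subextension of $K(\mu_\ell)/K$ coincides exactly with the constant quadratic extension $M:=K\cdot\FF_{q^2}$ featuring in hypothesis~(2). Let $\chi$ be the quadratic character of $G_K$ cutting out $M/K$ and $E^\chi/K$ the associated quadratic twist.

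\textbf{Step 1 (quadratic twist identities).} For any odd prime $\lambda$ (in particular $\lambda\in\{\ell,p\}$) the $\chi$-isotypic decomposition under $\Gal(M/K)$ gives
\begin{align*}
 r_\lambda(E/M) &= r_\lambda(E/K)+r_\lambda(E^\chi/K),\\
 w(E/M)        &= w(E/K)\cdot w(E^\chi/K),
\end{align*}
the first valid since $2\neq\lambda$ so the group cohomology of $\Gal(M/K)$ vanishes on $\QQ_\lambda$-vector spaces, and the second from the inductivity of $\varepsilon$-factors under the quadratic character~$\chi$.

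\textbf{Step 2 (translate $p$-parity to analytic rank).} Theorem~\ref{pparity_thm} applies to $E/K$, $E^\chi/K$ and $E/M$, giving $(-1)^{r_p}=w$ in each case. In positive characteristic the Hasse--Weil $L$-function is a polynomial in $q^{-s}$ satisfying the standard functional equation, so $w=(-1)^{\rho}$ where $\rho$ is the analytic rank. Hence $r_p\equiv\rho\pmod{2}$ in each of the three cases, and proving the $\ell$-parity for $E/K$ reduces to the assertion
\[ r_\ell(E/K)\equiv \rho(E/K)\pmod{2}. \]
By Step~1 at $\lambda=\ell$, this follows from the two corresponding congruences $r_\ell\equiv\rho\pmod 2$ for $E/M$ and for $E^\chi/K$.

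\textbf{Step 3 (use hypothesis (2)).} Hypothesis~(2) is equivalent to $\rho(E^\chi/K)=\rho(E/M)-\rho(E/K)\in\{0,1\}$. This very small analytic rank for the twist, combined with the Schneider--Tate inequality $\rank E^\chi(K)\leq\rho(E^\chi/K)$ and the Cassels--Tate pairing on the non-divisible part of $\Sha(E^\chi/K)[\ell^\infty]$ (alternating and non-degenerate for $\ell>2$, hence forcing square order on the finite quotient), is the main input used to extract $r_\ell(E^\chi/K)\equiv\rho(E^\chi/K)\pmod 2$. A parallel argument, applied now over $M$ where hypothesis~(2) holds trivially (since $M\cdot\FF_{q^2}=M$), yields $r_\ell(E/M)\equiv\rho(E/M)\pmod 2$, and combining the two via Step~1 closes the proof.

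\textbf{Main obstacle.} The crux of the argument is the $\ell$-parity for the twist $E^\chi/K$ in Step~3. In characteristic zero, analytic rank zero or one would trigger Gross--Zagier--Kolyvagin and immediately give finiteness of $\Sha$, hence the parity. In characteristic $p$ no such unconditional statement is available, and the $\ell$-corank of the maximal $\ell$-divisible subgroup of $\Sha(E^\chi/K)$ is not a priori known to be zero, nor even of definite parity. Hypothesis~(2) is precisely the input that controls this divisible corank modulo~$2$, via arithmetic--geometric duality and Kummer theory at the prime $\ell\neq p$; making this control explicit and deducing the twist $\ell$-parity from the twist $p$-parity is the technical heart of the proof.
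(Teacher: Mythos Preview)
Your proposal contains a genuine gap, and the overall strategy is quite different from the paper's.

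The reduction via the quadratic twist is correct as far as it goes: with $M=K_2$ and $E^\chi$ the twist, hypothesis~(2) gives $\rho(E^\chi/K)\in\{0,1\}$, and Proposition~\ref{an_rk0&1} (Kato--Trihan) then yields $r_\ell(E^\chi/K)=\rho(E^\chi/K)$ exactly, not merely mod~$2$. So the $E^\chi$ case is \emph{not} the obstacle you identify in your ``Main obstacle'' paragraph; it is in fact the easy case. (This step is essentially the content of Proposition~\ref{red_prop}(\ref{one_cond}) in the paper, which reduces the $\ell$-parity for $E/K$ to that for $E/M$.)

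The real gap is your treatment of $E/M$. Your claim that ``hypothesis~(2) holds trivially over $M$ since $M\cdot\FF_{q^2}=M$'' misreads the statement: the constant field of $M$ is $\FF_{q^2}$, so the relevant constant quadratic extension is $M\cdot\FF_{q^4}=K_4$, and there is no control on the growth of the analytic rank in $K_4/K_2$. Even granting some recursive reading, you would be invoking the theorem for $E/M$ to prove it for $E/K$, which is circular. Moreover, your argument never actually uses hypothesis~(1); the observation that $M$ sits inside $K(\mu_\ell)$ is mentioned but plays no role. This is a red flag: if your strategy worked, it would prove the theorem assuming only hypothesis~(2).

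The paper's proof is entirely different and does not use the $p$-parity result at all. After the same reduction to $E/K_2$ via Proposition~\ref{red_prop}, it invokes Ulmer's non-vanishing theorem (this is where hypothesis~(1), the evenness of $a$, is essential) to produce a degree-$\ell$ Kummer extension $L/K_n$ in which the analytic rank does not grow and whose ramification at bad places is controlled. The Galois closure $L_a/K$ has a metacyclic Galois group, and the Dokchitser regulator-constant machinery applied to a suitable Brauer relation, together with a Tamagawa-number computation, yields the $\ell$-parity for $E/K_2$. Your proposal contains none of this, and without it the argument for $E/M$ cannot be completed.
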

The proof will be presented in section~\ref{ell_sec}. Its main ingredients are the non-vanishing results of Ulmer in~\cite{ulmer_gnv} and the techniques for proving the parity conjectures from representation theoretic considerations as explained in~\cite{dok_modsquares, dok_sd}.

Although the conditions will be fulfilled for many curves, the methods in this paper fail to give a complete proof of the $\ell$-parity conjecture. See the remarks at the beginning of section~\ref{ell_sec} and the more detailed section~\ref{fail_sec} for an explanation of why we are not able to extend the proof any further.

\subsection{Notations}
The constant field of the global field $K$ of characteristic $p>3$ is the finite field $\FF_q$ for some power $q$ of $p$.
Let $C$ be a smooth, geometrically connected, projective curve over $\FF_q$ with function field $K$. Let $E/K$ be an elliptic curve, which we will assume to be non-isotrivial (i.e. the $j$-invariant of $E$ is transcendental over $\FF_q$). We fix a Weierstrass equation
\begin{equation}\label{w_eq}
 E \colon\quad y^2\, =\, x^3\, +\, A\,x\, +\, B
\end{equation}
with $A$ and $B$ in $K$ and the corresponding invariant differential $\omega = \tfrac{dx}{2y}$. By $F\colon E \to E'$ we denote the Frobenius isogeny of degree $p$ whose dual $V\colon E'\to E$ is the Verschiebung.

If $f\colon A\to B$ is a homomorphism of abelian groups, we write
\begin{equation*}
 z(f) = \frac{\# \coker(f)}{\# \ker(f)}
\end{equation*}
provided that the kernel and the cokernel of $f$ are finite. For any abelian group (or group scheme) $A$ and integer $m$, we denote by $A[m]$ the $m$-torsion part of it; and, for any prime $\ell$, the $\ell$-primary part will be denoted by $A[\ell^{\infty}]$. 

The Pontryagin dual of an abelian group $A$ is written $A^{\vee}$. If the Pontryagin dual of $A$ is a finitely generated $\ZZ_{\ell}$-module for some prime $\ell$, then we write $\divi(A)$ for its maximal divisible subgroup and let $A_{\divi}$ denote the quotient of $A$ by $\divi(A)$.

%

\section{Reduction to the semistable case}\label{red_ss_sec}

Before, we start we should mention that the conjecture of Birch and Swinnerton-Dyer is known for isotrivial curve $E$ by the work of Milne~\cite{milne_isotrivial}. So for the rest of the paper we will assume that $E$ is not isotrivial as otherwise the parity conjectures are known. In particular, it follows from this assumption that $E/K$ is ordinary. The parity conjecture is also known in the following cases:

\begin{prop}\label{an_rk0&1}
Let $A/K$ be an abelian variety over a function field of characteristic $p>0$ and let $\ell$ be a prime ($\ell=p$ is allowed). The analytic rank of $A/K$ is always greater or equal to the $\ell$-corank of the Selmer group. If the analytic rank of $A/K$ is zero, then the conjecture of Birch and Swinnerton-Dyer holds. If the analytic rank is $1$ then it coincides with the $\ZZ_\ell$-corank of the $\ell$-primary Selmer group.
\end{prop}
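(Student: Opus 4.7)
The plan is to deduce this from two deep theorems that are already in force in the setting of the paper: the Schneider-type upper bound on Selmer coranks in terms of the analytic rank, and the Kato-Trihan theorem stating that the Birch and Swinnerton-Dyer conjecture for $A/K$ is equivalent to finiteness of $\Sha(A/K)[\ell^{\infty}]$ for some prime $\ell$. Given these, the three assertions fall out by pure bookkeeping against the exact sequence \eqref{mwselsha_eq}.

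First I would justify the inequality $\ord_{s=1}L(A/K,s)\ge \corank_{\ZZ_\ell}\Sel_{\ell^\infty}(A/K)$. For $\ell\neq p$, this is Schneider's inequality, proved via Grothendieck's expression of $L(A/K,s)$ as a characteristic polynomial of Frobenius acting on the $\ell$-adic cohomology of the Néron model sheaf on $C$, combined with the standard comparison between that cohomology and the Selmer group. For $\ell=p$, the analogous bound (phrased in terms of the flat Selmer group defined in section~\ref{selmer_sec}) is one half of the Kato-Trihan theorem, obtained through syntomic/crystalline cohomology of the Néron model. In both cases equality is equivalent to $\Sha(A/K)[\ell^\infty]$ being finite.

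For the analytic rank $0$ case, the inequality forces $\corank_{\ZZ_\ell}\Sel_{\ell^\infty}(A/K)=0$ for every $\ell$. From \eqref{mwselsha_eq} this gives both $\rank A(K)=0$ and finiteness of $\Sha(A/K)[\ell^\infty]$. Applying Kato-Trihan yields the full Birch--Swinnerton-Dyer conjecture.

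For the analytic rank $1$ case, the inequality gives $\corank_{\ZZ_\ell}\Sel_{\ell^\infty}(A/K)\in\{0,1\}$. I would rule out $0$ by contradiction: if the corank vanished, then $\Sha[\ell^\infty]$ would be finite, Kato-Trihan would apply, and BSD would give $\ord_{s=1}L(A/K,s)=\rank A(K)\le \corank_{\ZZ_\ell}\Sel_{\ell^\infty}(A/K)=0$, contradicting the hypothesis. Hence the corank equals $1$. The only genuine content is the general inequality of the first statement; once that is invoked, the remaining arguments are formal, so there is no real obstacle beyond citing Schneider and Kato-Trihan in the correct form (the flat Selmer group for $\ell=p$, the Galois-cohomological one for $\ell\neq p$).
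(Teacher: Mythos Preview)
Your argument is correct, but the route differs from the paper's in two respects.

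For the general inequality you invoke Schneider for $\ell\neq p$ and Kato--Trihan only for $\ell=p$. The paper instead appeals uniformly to the framework of~\cite{kato_trihan}, where for every $\ell$ one has finite-dimensional $\QQ_\ell$-spaces $I_{2,\ell}\subset I_{3,\ell}$ with $\dim I_{3,\ell}$ equal to the analytic rank and $\dim I_{2,\ell}$ equal to the Selmer corank; the inequality is then just the containment. Your splitting is historically natural but slightly less economical here.

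The more interesting divergence is in the rank-$1$ case. You argue by contradiction through the Kato--Trihan equivalence: corank $0$ would force $\Sha[\ell^\infty]$ finite, hence full BSD, hence analytic rank $0$. This is valid (and is in fact exactly the maneuver the paper uses later in Proposition~\ref{red_prop} under condition~(\ref{one_cond})). The paper's own proof of the proposition is more direct: since $I_{3,\ell}$ is one-dimensional and $\id-\phi^1_\ell$ acts nilpotently on it, that operator must vanish there, so $I_{3,\ell}\subset I_{2,\ell}$ and the two dimensions coincide. This avoids re-invoking the heavy equivalence theorem and stays entirely inside linear algebra. Your approach, on the other hand, does not require unpacking the internal structure of $I_{2,\ell}\subset I_{3,\ell}$; either is acceptable.

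One small point: your aside that ``equality is equivalent to $\Sha(A/K)[\ell^\infty]$ being finite'' is stated without justification and is not needed for the argument; you may simply drop it.
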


Note that if we restricted ourselves to elliptic curves and to the case $\ell \neq p$, then this result could already be deduced from the work of Artin and Tate~\cite{tate}.

\begin{proof} By~3.2 in~\cite{kato_trihan}, the Hasse-Weil $L$-function of $A/K$ can be expressed as an alternating product of characteristic polynomials of some operators $\phi^i_\ell$ acting on a finite dimensional $\QQ_\ell$-vector space $H^i_{\QQ_\ell}$, with $i=0,1,2$. Then by~3.5.1 in~\cite{kato_trihan}, the order at $s=1$ of the Hasse-Weil $L$-function can be interpreted as the multiplicity of the eigenvalue 1 for
the operator $\phi^1_\ell$ on $H^1_{\QQ_\ell}$. Following the notations of~3.5 in~\cite{kato_trihan}, let $I_{3,\ell}$ denote the part of $H^1_{\QQ_\ell}$ on which the operator $\id-\phi^1_\ell$ acts nilpotently and let $I_{2,\ell}$ denote the kernel of $\id-\phi^1_\ell$, such that we have the inclusions:
$$I_{2,\ell}\subset I_{3,\ell}\subset H^1_{\QQ_\ell}.$$
Since by~3.5.1 in~\cite{kato_trihan}, the operator $\id-\phi^i_\ell$ is an isomorphism for $i=0,2$, it follows that the analytic rank of $A/K$ is equal to the dimension of $I_{3,\ell}$. On the other hand, it follows from~3.5.5 and~3.5.6 in~\cite{kato_trihan}, that the $\ell$-corank of the Selmer group of $A/K$ is the dimension of $I_{2,\ell}$ so that we deduce that the analytic rank of $A/K$ is always greater or equal to the $\ell$-corank of the Selmer group of $A/K$. If the analytic rank of $A/K$ is trivial, so is the dimension of $I_{3,\ell}$. It implies that the dimension of $I_{2,\ell}$ is zero and by~3.5.6 in~\cite{kato_trihan}, we conclude that the Mordell-Weil group is also of rank zero. We then conclude the proof of the assertion thanks to the main result~1.8 of~\cite{kato_trihan}. If the analytic rank of $A/K$ is one, then $\phi^1_\ell$ acts like the identity on $I_{3,\ell}$ and therefore $I_{2,\ell}=I_{3,\ell}$ and the second assertion immediately follows.
\end{proof}

The following proposition will be used at several places to reduce the conjecture to easier situations.

\begin{prop}\label{red_prop}
 Let $E/K$ be a non-isotrivial curve and $L/K$ a separable extension. Let $\ell$ be a prime. Assume one of the following three
 conditions:
 \begin{enumerate}
  \item\label{odd_cond} $\ell\neq p$ and the extension $L/K$ is a Galois extension of odd degree.
  \item\label{zero_cond} The analytic rank of $E$ does not grow in $L/K$.
  \item\label{one_cond} $\ell\neq p$ and the analytic rank of $E$ does not grow by more than $1$ in $L/K$.
 \end{enumerate}
 Then the $\ell$-parity conjecture for $E/K$ holds if and only if the $\ell$-parity conjecture for $E/L$ is known.
\end{prop}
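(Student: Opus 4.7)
The first step is to re-express the $\ell$-parity conjecture in purely arithmetic terms. Since $L(E/K,s)$ is a polynomial in $q^{-s}$ satisfying a functional equation centred at $s=1$ of sign $w(E/K)$, one has $(-1)^{\ord_{s=1} L(E/K,s)} = w(E/K)$, so the $\ell$-parity conjecture for $E/K$ becomes the congruence $r_\ell(E/K) \equiv \ord_{s=1} L(E/K,s) \pmod{2}$. We are thus reduced to showing, under each hypothesis, that
\begin{equation*}
r_\ell(E/L) - r_\ell(E/K) \equiv \ord_{s=1} L(E/L,s) - \ord_{s=1} L(E/K,s) \pmod{2}.
\end{equation*}

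\paragraph{Reduction to a single abelian variety $B$.} The plan is then to introduce $W := \operatorname{Res}_{L/K}(E_L)$, the Weil restriction of $E_L$ along $L/K$, an abelian variety over $K$ of dimension $[L:K]$ with $L(W/K,s) = L(E/L,s)$ and $\Sel_{\ell^\infty}(W/K) \cong \Sel_{\ell^\infty}(E/L)$. The inclusion $E \hookrightarrow W$ and the trace $W \to E$ compose to $[L:K]\cdot\id_E$, yielding an isogeny decomposition $W \sim E \times B$ over $K$ for some abelian variety $B/K$ of dimension $[L:K]-1$. Both the $\ell$-Selmer corank and the analytic rank are additive on direct products and invariant under $K$-isogeny (for $\ell=p$ the latter uses the finiteness of the kernel and cokernel of the induced map on flat Selmer groups), so the congruence collapses to
\begin{equation*}
r_\ell(B/K) \equiv \ord_{s=1} L(B/K,s) \pmod{2},
\end{equation*}
i.e.\ the $\ell$-parity congruence applied to the auxiliary abelian variety $B/K$ itself.

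\paragraph{Verification case by case.} Condition~\eqref{zero_cond} is equivalent to $\ord_{s=1} L(B/K,s)=0$, whereupon Proposition~\ref{an_rk0&1} applied to $B$ forces $r_\ell(B/K)=0$. Condition~\eqref{one_cond} gives $\ord_{s=1} L(B/K,s)\in\{0,1\}$, and the low-rank equality part of Proposition~\ref{an_rk0&1} yields $r_\ell(B/K) = \ord_{s=1} L(B/K,s)$ on the nose. For condition~\eqref{odd_cond}, $B$ inherits from the functoriality of Weil restriction an action of $G:=\Gal(L/K)$, making $V_\ell B := \Sel_{\ell^\infty}(B/K)^\vee \otimes \QQ_\ell$ into a $\QQ_\ell[G]$-module with no trivial constituent; the Weil pairing on $T_\ell E$ combined with Poincar\'e duality on Selmer endows it with a $G$-equivariant self-duality. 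Since $|G|$ is odd, Frobenius--Schur shows that the only self-dual irreducible $G$-representation is the trivial one, so every non-trivial irreducible constituent of $V_\ell B$ appears together with its complex conjugate in equal multiplicity, forcing $r_\ell(B/K) = \dim V_\ell B$ to be even. Applying the same pairing argument to the Artin factorisation $L(B/K,s) = \prod_{\rho\neq\mathbf{1}} L(E/K,\rho,s)^{\dim\rho}$ shows $\ord_{s=1} L(B/K,s)$ is even as well, so the congruence for $B$ holds.

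\paragraph{Main obstacle.} The two technical points that will require real work are the isogeny invariance of $r_\ell$ when $\ell=p$ (needed in condition~\eqref{zero_cond}), which rests on finiteness of kernel and cokernel of $\phi_*\colon \Sel_{p^\infty}(A) \to \Sel_{p^\infty}(A')$ for an isogeny $\phi$ in the flat-cohomology setting, and in condition~\eqref{odd_cond} the checking that Poincar\'e duality on Selmer really is $G$-equivariant and interacts correctly with the isotypic decomposition produced by Weil restriction, so that non-self-dual $G$-constituents of $V_\ell B$ genuinely pair with their duals in matching multiplicities.
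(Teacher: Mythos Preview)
For cases~(\ref{zero_cond}) and~(\ref{one_cond}) your argument is essentially the paper's: both pass to the Weil restriction $W=\operatorname{Res}_{L/K}E$, split off the complement $B$, and feed $B$ into Proposition~\ref{an_rk0&1}. You package the comparison via the isogeny $W\sim E\times B$ and isogeny invariance of $r_\ell$, which is slicker than the paper's explicit three-term sequence $\Sel_{\ell^\infty}(E/K)\to\Sel_{\ell^\infty}(W/K)\to\Sel_{\ell^\infty}(B/K)$; in particular for case~(\ref{one_cond}) the paper must separately chase finiteness of the cokernel through a diagram in $G_\Sigma$-cohomology, which your formulation sidesteps entirely. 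Your reformulation of the conjecture as $r_\ell\equiv\ord_{s=1}L(E/K,s)\pmod 2$, valid over function fields because the $L$-function is a polynomial with known functional equation, is also a mild but genuine simplification over tracking root numbers directly.

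For case~(\ref{odd_cond}) the paper simply cites Theorem~1.3 of~\cite{dok_sd}, while you attempt a direct argument. Your strategy --- self-duality of $\Sel_{\ell^\infty}(E/L)^\vee\otimes\QQ_\ell$ as a $\QQ_\ell[G]$-module, then Frobenius--Schur for odd $G$ --- is correct and is in fact precisely the content of that cited result, so you are re-deriving it rather than shortcutting it. But the phrase ``Poincar\'e duality on Selmer'' is not a proof: neither Poitou--Tate nor the Flach/Cassels--Tate pairing is non-degenerate on the divisible part of the Selmer group, which is exactly the piece contributing to $r_\ell$. The genuine mechanism, which you gesture toward in your obstacle paragraph, runs through the isotypic decomposition of $W$ itself rather than any pairing on cohomology: the $G$-equivariant principal polarization on $W$ gives an isogeny $W_\rho\sim (W_{\rho^*})^\vee$ between isotypic abelian subvarieties, and since every abelian variety is isogenous to its own dual one gets $r_\ell(W_\rho)=r_\ell(W_{\rho^*})$ by isogeny invariance, hence $m_\rho=m_{\rho^*}$. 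Making this precise is the substance of~\cite{dok_sd}, and it is not lighter than what you have written for the other two cases.
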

\begin{proof}
 If condition~(\ref{odd_cond}) holds then the conclusion follows directly from Theorem~1.3 in~\cite{dok_sd}. Note already here that the complete paper~\cite{dok_sd} and its proofs hold in our situation as long as $\ell\neq p$.

 Suppose now as in condition~(\ref{zero_cond}) that the analytic rank does not grow in $L/K$. Denote by $A/K$ the Weil restriction of $E$ under $L/K$ and by $B/K$ the quotient of $A$ by the natural image of $E$ in it. Since 
\begin{equation*}
 L(E/L,s) = L(A/K,s) = L(E/K,s)\cdot L(B/K,s)
\end{equation*}
we see that that the analytic rank of $B/K$ is zero and therefore by Proposition~\ref{an_rk0&1}, the full Birch and Swinnerton-Dyer conjecture holds. In particular, the Mordell-Weil rank of $B/K$ is zero and its Selmer group is a finite group.  Moreover, we have an exact sequence
\begin{equation}\label{sels_ses}
 \Sel_{\ell^{\infty}}(E/K) \to \Sel_{\ell^{\infty}}(A/K) \to \Sel_{\ell^{\infty}}(B/K),
\end{equation}
and the kernel of the first map lies in $B(K)[\ell^{\infty}]$, which is a finite group. Hence we conclude that
$r_{\ell}$ is equal to the corank of $\Sel_{\ell^{\infty}}(A/K)$ and, by Proposition~3.1 in~\cite{mazur_rubin}, this is the same as the corank of $\Sel_{\ell^{\infty}}(E/L)$. So we are able to deduce the $\ell$-parity for $E/K$ from the $\ell$-parity for $E/L$.

 Finally, suppose that $\ell\neq p$ and that the analytic rank grows exactly by $1$; so we are under condition~(\ref{one_cond}). Then  we know by Proposition~\ref{an_rk0&1} that the rank of $\Sel_{\ell^{\infty}}(B/K)$ is less or equal to $1$. We wish to exclude the possibility that it is $0$, so assume by now that $\Sel_{\ell^{\infty}}(B/K)$ is finite. But this means that $\Sha(B/K)[\ell^{\infty}]$ is finite and hence the full conjecture of Birch and Swinnerton-Dyer holds by~\cite{kato_trihan} again. So we reach a contradiction since we would have $0 = \rank B(K) = \ord_{s=1} L(B/K,s) = 1$. Hence we have shown that the corank of $\Sel_{\ell^{\infty}}(B/K)$ is $1$.

Note that the left-hand map in~\eqref{sels_ses} still has finite kernel. We will show now that right-hand map has finite cokernel, too. Let $\Sigma$ be the finite set of places in $K$ of bad reduction for $E$. Write $G_{\Sigma}(K)$ for the Galois group of the maximal separable extension of $K$ which is unramified outside $\Sigma$. Note that from the  
definition of the Selmer group, we find the following diagram with exact rows and columns
\begin{equation*}
\xymatrix@C-8pt{
 0\ar[d] & 0\ar[d] & \\ 
 \Sel_{\ell^{\infty}}(A/K)\ar[d]\ar[r] &
 \Sel_{\ell^{\infty}}(B/K) \ar[d]& \\
 H^1\bigl(G_{\Sigma}(L),A[\ell^{\infty}]\bigr)\ar[d]\ar[r] & 
 H^1\bigl(G_{\Sigma}(K),B[\ell^{\infty}]\bigr)\ar[d]\ar[r] &
 H^2\bigl(G_{\Sigma}(K),E[\ell^{\infty}]\bigr)\ar[r]^{r} & 
 H^2\bigl(G_{\Sigma}(K),A[\ell^{\infty}]\bigr)  \\
 \bigoplus_{v\in \Sigma} H^1(K_v,A)[\ell^{\infty}] \ar[r] &
 \bigoplus_{v \in \Sigma} H^1(K_v,B)[\ell^{\infty}]
 }
\end{equation*}
We know that the bottom groups are finite as they are dual to $\varprojlim A(K_v)/\ell^k$ and $\varprojlim B(K_v)/\ell^k$ respectively. Hence we see from the snake lemma that we only have to prove that the kernel of $r$ is finite. Shapiro's lemma shows that $H^2(G_{\Sigma}(K), A[\ell^\infty])$ is isomorphic to $H^2(G_{\Sigma}(L),E[\ell^\infty])$ and hence the map $r$ is simply the restriction map. As its kernel will only get larger when increasing $L$, we may assume that $L/K$ is Galois. Then the kernel of the restriction is contained in the part of $H^2(G_{\Sigma}(K),E[\ell^{\infty}])$ that is killed by $[L:K]$. Hence it is finite, because  $H^2(G_{\Sigma}(K),E[\ell^{\infty}])$ is a discrete abelian group with finite $\ZZ_{\ell}$-corank.

Therefore, we conclude again that the corank of the $\ell$-primary Selmer group increased by exactly $1$ in $L/K$.
\end{proof}
\begin{cor}\label{toss_cor}
 If the $\ell$-parity conjecture holds for all semistable elliptic curves, then it holds for all elliptic curves.
\end{cor}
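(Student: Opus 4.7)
The plan is to find, for any elliptic curve $E/K$, a finite separable extension $L/K$ such that simultaneously \emph{(i)} $E/L$ is semistable at every place of $L$, and \emph{(ii)} the analytic rank of $E$ does not grow in the extension $L/K$. Once such an $L$ is produced, condition~(\ref{zero_cond}) of Proposition~\ref{red_prop} (which applies uniformly to every prime $\ell$, including $\ell=p$) immediately reduces the $\ell$-parity conjecture for $E/K$ to the $\ell$-parity conjecture for $E/L$, and the latter holds by hypothesis since $E/L$ is semistable.

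The existence of \emph{some} finite extension $L_0/K$ over which $E$ becomes semistable is classical: locally at each place of potentially multiplicative reduction a quadratic ramified extension suffices, and at each place of potentially good reduction a tame extension of degree dividing $12$ works since the residue characteristic is $p>3$. One may then take $L_0/K$ to be the compositum of such local extensions (or, equivalently, arise from the kernel of the action on a suitable Tate module). The difficulty is that passing to $L_0$ can well increase the analytic rank, and so condition~(\ref{zero_cond}) may fail for the naive choice $L=L_0$.

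The second step is where Ulmer's non-vanishing theorem \cite{ulmer_gnv} enters. Since $E$ is non-isotrivial, Ulmer provides, among abelian extensions lying over $L_0$, an abundance of characters $\chi$ with $L(E,\chi,1)\neq 0$. Using enough of these non-vanishing twists, one can exhibit an extension $L\supset L_0$ (still Galois over $K$, still semistabilizing $E$) for which the factorisation
\begin{equation*}
    L(E/L,s) \;=\; \prod_{\rho} L(E,\rho,s)^{\dim \rho},
\end{equation*}
over the irreducible characters $\rho$ of $\Gal(L/K)$, contributes no new zero at $s=1$ beyond those already present in $L(E/K,s)$. This is exactly condition~\emph{(ii)} above.

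The main obstacle is this second step: controlling the analytic rank growth is the non-formal part of the argument, and without Ulmer's input there is no obvious way to ensure that a semistabilizing extension does not create extra vanishing of the $L$-function. Once such an $L$ is in hand, the corollary follows by a direct appeal to Proposition~\ref{red_prop}\,(\ref{zero_cond}).
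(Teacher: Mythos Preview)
Your proposal is correct and follows essentially the same route as the paper: invoke Ulmer's result (Theorem~11.1 in~\cite{ulmer_gnv}) to obtain a separable extension $L/K$ over which $E$ is semistable and in which the analytic rank does not grow, then apply Proposition~\ref{red_prop}\,(\ref{zero_cond}). The paper simply cites Ulmer's theorem as a black box giving both properties at once, whereas you sketch a two-step heuristic (first semistabilize, then enlarge via non-vanishing twists to kill extra zeros); this is fine as motivation, but be aware that the actual content is entirely in Ulmer's theorem, and your description of the second step is somewhat informal compared to what his paper proves.
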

\begin{proof}
 Theorem~11.1 in~\cite{ulmer_gnv} proves that there is a separable extension $L/K$ such that the reductions of $E$ becomes semistable and the analytic rank does not grow in $L/K$.
 \end{proof}

The same argument also reduces the full parity conjecture to the semistable case.

%

\section{Local computations}\label{local_sec}

The following computations are purely local and we change the notations for this section. Let $K$ be a local field of characteristic $p>3$ with residue field $\FF_q$. The ring of integers is written $\mathcal{O}$, the maximal ideal $\mathfrak{m}$ and the normalised valuation by $v$. The elliptic curve $E/K$ is given by the equation~\eqref{w_eq}. By changing the equation, if necessary, we may suppose for this section that $A$ and $B$ are in $\mathcal{O}$.

Define $L$ to be the minimal extension of $K$ such that $E'(L)[p] =\cyclic{p}$, or equivalently that $E[F]$ is isomorphic to $\mu[p]$ as a group scheme over $L$. There is a representation 
\begin{equation*}
\rho\colon \Gal(L/K)\to \Aut(E'(L)[p]) \cong (\cyclic{p})^{\times} 
\end{equation*}
which shows that $[L:K]$ divides $p-1$. Define $\bigl(\frac{-1}{L/K}\bigr)\in\{\pm 1\}$ to be the image of $-1$ under the composition of the reciprocity map and $\rho$
\begin{equation*}
 K^{\times} \to \Gal(L/K)\to (\cyclic{p})^{\times}.
\end{equation*}
So $\bigl(\frac{-1}{L/K}\bigr) = +1$ if and only if $-1$ is a norm from $L^{\times}$ to $K^{\times}$.

We will also consider $\zV = z\bigl(V\colon E'(K) \to E(K)\bigr)$, which is a certain power of $p$. Put
\begin{equation*}
 \sigma = \sigma(E/K) = \begin{cases} +1 \quad&\text{ if $z_V$ is a square and} \\ -1 &\text{ otherwise.} \end{cases}
\end{equation*}
It is important to note that we cannot define $z\bigl(F\colon E(K) \to E'(K)\bigr)$ since its cokernel will never be finite.

Finally, as in the introduction, we let $w = w(E/K)$ be the local root number of $E$ over $K$, as defined by Deligne and well explained in~\cite{rohrlich_wd}.
The aim of this section is to show the following theorem.
\begin{thm}\label{local_thm}
 Let $K$ be a local field of characteristic $p>3$.
 For any non-isotrivial elliptic curve $E/K$ whose reduction is not additive and potentially good, we have $w(E/K) = \bigl(\frac{-1}{L/K}\bigr) \cdot \sigma(E/K)$.
\end{thm}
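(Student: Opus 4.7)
The plan is to perform case analysis by reduction type. The hypothesis excludes additive potentially good reduction, leaving three cases: good (necessarily ordinary) reduction, multiplicative reduction (split or non-split), and additive potentially multiplicative reduction (Kodaira type $I_n^*$). In each case I would compute the three quantities $w(E/K)$, $\bigl(\tfrac{-1}{L/K}\bigr)$ and $\sigma(E/K)$ separately and verify the identity.

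For good reduction, $E[F]$ is a connected form of $\mu_p$ that becomes trivial over $K^{\text{unr}}$, so the character $\rho$ cutting out $L$ factors through the unramified quotient of $G_K$, and $L/K$ is unramified. Hence $-1 \in \mathcal{O}^{\times}$ is automatically a norm from $L$ and $\bigl(\tfrac{-1}{L/K}\bigr) = +1$, while $w(E/K) = +1$. For $\sigma$, I would decompose $V$ along the reduction filtration
\begin{equation*}
0 \to \hat{E}'(\mathfrak{m}) \to E'(K) \to \tilde{E}'(\FF_q) \to 0
\end{equation*}
and similarly for $E$, analysing the formal-group part (using the height-$1$ ordinary structure of $\hat{E}$, which controls the cokernel of $V$ on the pro-$p$ part) and the finite part (on which any endomorphism of a finite abelian group has equal-size kernel and cokernel) separately, concluding that $z_V$ is a square.

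For split multiplicative reduction, the Tate parametrization gives $E(K) = K^{\times}/q_E^{\ZZ}$ and $E'(K) = K^{\times}/q_E^{p\ZZ}$, with $V$ induced by $\id\colon K^{\times} \to K^{\times}$. Hence $V_K$ is surjective with kernel $q_E^{\ZZ}/q_E^{p\ZZ}\cong \ZZ/p\ZZ$, yielding $z_V = 1/p$ and $\sigma = -1$. Moreover $E[F] \cong \mu_p$ over $K$ already, so $L = K$ and $\bigl(\tfrac{-1}{L/K}\bigr) = +1$, matching the classical $w(E/K) = -1$. The non-split case is the unramified quadratic twist: $L/K$ is unramified quadratic, $\bigl(\tfrac{-1}{L/K}\bigr) = +1$ (units are norms from any unramified extension), $w(E/K) = +1$, and the analogous twisted-Tate computation or a direct descent from the split case over the unramified quadratic $K_1$ gives $\sigma = +1$.

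For additive potentially multiplicative reduction, $E$ attains (split or non-split) multiplicative reduction over a tamely ramified quadratic extension $K'/K$. My plan is to compare all three quantities over $K$ and $K'$, invoking the multiplicative formula over $K'$ and tracking behaviour under the quadratic twist: the root number changes by the $\epsilon$-factor of the twisting character, the symbol $\bigl(\tfrac{-1}{L/K}\bigr)$ is computed by explicit reciprocity (now $L/K$ is genuinely ramified of degree dividing $p-1$ and interacts with the ramification of $K'/K$), and $z_V$ is controlled via the natural inclusion $E'(K)\hookrightarrow E'(K')$ combined with a snake-lemma argument comparing kernels and cokernels of $V$ on $K$- and $K'$-points. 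This case will be the main obstacle, since all three factors become nontrivial and must conspire to produce the predicted sign. I would model the bookkeeping on Fisher's appendix to \cite{dok_nonab}, adapted to the positive-characteristic setting and to the particular feature that $F$ and $V$ are not interchangeable.
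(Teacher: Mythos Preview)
Your case analysis and your treatment of the multiplicative and potentially multiplicative cases are essentially the same as the paper's. However, there is a genuine gap in the good reduction case.

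You write ``good (necessarily ordinary) reduction'', but this is false. Non-isotriviality of $E$ over the local field $K$ means that the Hasse invariant $\alpha\in K$ is nonzero (so $E$ is ordinary as a curve over $K$); it does \emph{not} force the reduction $\tilde E/\FF_q$ to be ordinary. The reduction is supersingular exactly when $v(\alpha)>0$, and this certainly occurs---the paper even works out explicit examples with $p=5$.

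In the good supersingular case your argument fails at both steps. First, $L/K$ is typically ramified: since $\tilde{E}'(\overline{\FF}_q)[p]=0$, any nontrivial $p$-torsion point of $E'$ over $L$ lies in the formal group, and a Newton-polygon look at $V_1(t)=\alpha\,t+\cdots+u\,t^p+\cdots$ shows that $e_{L/K}$ can be any divisor of $p-1$. So the claim $\bigl(\tfrac{-1}{L/K}\bigr)=+1$ via unramifiedness collapses. Second, $z_V$ need not be a square: the paper carries out an explicit Hensel-type computation on the formal group to obtain $z_V=q^{v(\alpha)}$, which is a non-square whenever $q$ is not a square and $v(\alpha)$ is odd. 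What rescues the theorem is that the two signs always flip together: the key lemma establishes $v(\alpha)\equiv\tfrac{p-1}{e_{L/K}}\pmod 2$, and combined with the criterion that $\bigl(\tfrac{-1}{L/K}\bigr)=+1$ iff $q$ is a square or $\tfrac{p-1}{e_{L/K}}$ is even, this gives $\sigma\cdot\bigl(\tfrac{-1}{L/K}\bigr)=+1=w$. This supersingular bookkeeping is the genuinely delicate part of the proof; by contrast, the additive potentially multiplicative case that you flag as ``the main obstacle'' is dispatched by the paper in a few lines (one simply has $\sigma=+1$ as in the non-split case, and $w=\bigl(\tfrac{-1}{L/K}\bigr)=\chi(-1)$ directly).
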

We will prove this theorem by treating each type of reduction separately. In the last section of this paper, we will prove this local theorem without the assumption on the reduction using global methods.
See Conjecture~5.3 in~\cite{dok_09} for the analogue in characteristic zero. In particular, the following computations show that the analogy should take places above $p$ in characteristic zero to supersingular places in characteristic $p$.

Recall the definition of the Hasse invariant $\alpha = A(E,\omega)$ associated to the given integral equation~\eqref{w_eq}. 
Write $\mathcal{F}$ for the formal group of $E$ over $\mathcal{O}$, and similarly $\mathcal{F}'$ for the formal group for the isogenous curve $E'$ given by the 
integral equation
\begin{equation*}
 E'\colon \quad y'^2 \, = \, x'^3\, + \,A^p\, x'\,+\,B^p.
\end{equation*}

Choose $t=-\frac{x'}{y'}$ as the parameter of the formal group $\mathcal{F}'$. 
Then the formal isogeny $V_1$ of the Verschiebung $V$ is of the form
\begin{equation*}
 \xymatrix@R=3mm{
 V_1 \colon \mathcal{F}'(\mathfrak{m})\ar@{-o>}[r] &  \mathcal{F}(\mathfrak{m}) \\
 t  \ar@{|-o>}[r] &  \alpha \cdot G(t) + H(t^p)
 }
\end{equation*}
 for some $G(t) = t+\cdots \in \mathcal{O}[\![t]\!]$ and $H(t) = u\cdot t +\cdots \in \mathcal{O}[\![t]\!]$ with $u$ in $\mathcal{O}^{\times}$. See section~12.4 in~\cite{katz_mazur} for other descriptions of the Hasse invariant $\alpha$. 

We begin now the proof of Theorem~\ref{local_thm}. For the computation of the local root number $w$, we can simply refer to Proposition~19 in~\cite{rohrlich_wd}, where we find that $w=-1$ if the reduction is split multiplicative and $w=+1$ if it is good or non-split multiplicative.

\subsection{Good reduction}

\begin{prop}\label{good_prop}
 Suppose $E/K$ has good reduction. Then $w=+1$. The quantities 
 $\sigma$ and $\bigl(\frac{-1}{L/K}\bigr)$ are $+1$ if and only if $q^{v(\alpha)}$ is a square.
 In particular, if the reduction is ordinary then $\sigma=\bigl(\frac{-1}{L/K}\bigr)=+1$.
\end{prop}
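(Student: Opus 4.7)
The value $w = +1$ for good reduction is immediate from Proposition~19 of~\cite{rohrlich_wd}. For the remaining two quantities my plan is to split $z_V$ through the formal-group/reduction filtration, and to identify $L/K$ from the Newton polygon of $V_1$.

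For $\sigma$: the snake lemma applied to the short exact sequence $0 \to \mathcal{F}'(\mathfrak{m}) \to E'(K) \to \tilde E'(\FF_q) \to 0$ and its analogue for $E$, with vertical maps $V_1$ and $\tilde V$, gives $z_V = z(V_1) \cdot z(\tilde V)$. Since $\tilde E$ and $\tilde E'$ are $\FF_q$-isogenous abelian varieties they share the same point count, so the identity $|A|/|B| = |\ker \phi|/|\coker \phi|$ for a map of finite groups forces $z(\tilde V) = 1$. Now $V_1(t) = \alpha t + O(t^2) + ut^p + \cdots$ with $u \in \mathcal{O}^\times$, so for $i > v(\alpha)/(p-1)$ the term $\alpha t$ dominates on $\mathfrak{m}^i$, and $V_1$ acts on each graded piece $\mathfrak{m}^j/\mathfrak{m}^{j+1}$ with $j \geq i$ as multiplication by $\alpha$, a bijection. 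Thus $V_1$ restricts to a bijection $\mathcal{F}'(\mathfrak{m}^i) \to \mathcal{F}(\mathfrak{m}^{i+v(\alpha)})$; a second snake-lemma step then yields $z(V_1) = q^{v(\alpha)}$. Hence $\sigma = +1$ if and only if $q^{v(\alpha)}$ is a square, which holds automatically in the ordinary case.

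For $\bigl(\frac{-1}{L/K}\bigr)$: in the ordinary case, $E'[V]$ extends to an étale finite flat group scheme over $\mathcal{O}$, so $L/K$ is unramified; since the norm on unit groups is surjective for unramified local extensions, $-1$ is a norm and the symbol equals $+1$. In the supersingular case, Weierstrass preparation writes $V_1 = P \cdot U$ with $U(0) \in \mathcal{O}^\times$ and $P$ a distinguished polynomial of degree $p$; the Newton polygon of $P$ has single slope $v(\alpha)/(p-1)$ on its nonzero roots, so these roots satisfy, to leading order, $\beta^{p-1} = -\alpha/U(0)$, and because $(p-1)$ is invertible on $1+\mathfrak{m}$ the correction is a $(p-1)$-th power; hence $L \subseteq K\bigl(\sqrt[p-1]{-\alpha/U(0)}\bigr)$. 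Identifying $\rho$ with the Kummer character via the canonical $\FF_p^\times \cong \mu_{p-1} \subset K$, the desired symbol becomes the tame Hilbert symbol $(-1,\,-\alpha/U(0))_{p-1}$; the explicit tame formula together with $(q-1)/(p-1) \equiv f \pmod{2}$ (writing $q = p^f$) evaluates this to $(-1)^{v(\alpha)f}$, which is $+1$ precisely when $q^{v(\alpha)} = p^{fv(\alpha)}$ is a perfect square in $\ZZ$.

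The delicate half is the supersingular case. The formal-group identity $z(V_1) = q^{v(\alpha)}$ is clean once the $\mathfrak{m}$-adic filtration argument is in place. The more involved piece is the Kummer/tame-symbol analysis of $L/K$: identifying $L$ correctly inside $K(\sqrt[p-1]{-\alpha/U(0)})$, reconciling the Galois character $\rho$ with the Kummer character through the Teichmüller identification, and pushing the explicit tame-symbol computation through until the final sign agrees with the $q^{v(\alpha)}$-square criterion.
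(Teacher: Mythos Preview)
Your computation of $\sigma$ via the formal-group filtration is essentially the paper's argument; the paper phrases the bijection $\mathcal{F}'(\mathfrak{m}^N)\to\mathcal{F}(\mathfrak{m}^{N+v(\alpha)})$ through an explicit Hensel step rather than graded pieces, but the content is identical. (One imprecision: ``acts on $\mathfrak{m}^j/\mathfrak{m}^{j+1}$ as multiplication by $\alpha$, a bijection'' should specify the target as $\mathfrak{m}^{j+v(\alpha)}/\mathfrak{m}^{j+v(\alpha)+1}$, since $\alpha$ has positive valuation in the supersingular case.)

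For the supersingular value of $\bigl(\tfrac{-1}{L/K}\bigr)$ your route via Kummer theory and the tame Hilbert symbol is genuinely different from the paper's. The paper instead proves a small lemma that the parity of $v(\alpha)$ equals the parity of $(p-1)/e_{L/K}$, via a case analysis on $v_L(t_P)$ and on the extension $L/K(x'_P)$, and then quotes Lemma~12 of~\cite{dok_isogeny} (the symbol is $+1$ iff $q$ is a square or $(p-1)/e_{L/K}$ is even). Your approach is more self-contained---no casework, no external citation---at the cost of invoking the explicit tame-symbol formula; the paper's approach trades those for a more elementary endgame.

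There is, however, a gap in your Kummer step. The sentence ``the correction is a $(p-1)$-th power; hence $L\subseteq K\bigl(\sqrt[p-1]{-\alpha/U(0)}\bigr)$'' has the inclusion backwards: writing $\beta^{p-1}=\gamma(1+\epsilon)$ with $\epsilon\in\mathfrak{m}_L$ and $1+\epsilon=\delta^{p-1}$ for $\delta\in 1+\mathfrak{m}_L$ only shows that $\eta=\beta/\delta$ is a $(p-1)$-th root of $\gamma$ lying in $L$, so $K(\sqrt[p-1]{\gamma})\subseteq L$. What you actually need, and do not justify, is the equality of characters $\rho=\chi_\gamma$ (not merely an inclusion of fields). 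The missing ingredient is the formal-group multiplication: if $\rho(\sigma)=m\in\FF_p^{\times}=\mu_{p-1}(K)$, then $\sigma(\beta)=[m]_{\mathcal{F}'}(\beta)=m\beta\bigl(1+O(\beta)\bigr)$, so $\sigma(\eta)/\eta$ lies in $\mu_{p-1}\cap m\,(1+\mathfrak{m}_L)=\{m\}$, giving $\chi_\gamma(\sigma)=\rho(\sigma)$. With this in hand your tame-symbol computation $(-1,\gamma)_{p-1}=(-1)^{v(\alpha)\,(q-1)/(p-1)}=(-1)^{f\,v(\alpha)}$ is correct and finishes the argument.
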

\begin{proof}
 We may suppose that the equation~\eqref{w_eq} is minimal, i.e. that it has good reduction. We then have the diagram
\begin{equation*}
 \xymatrix@C=10mm{
 0 \ar@{-o>}[r] & \mathcal{F}'(\mathfrak{m})  \ar@{-o>}[r] \ar[d]_{V_1} & E'(K) \ar@{-o>}[r] \ar[d]_{V} & \tilde{E'}(\FF_q) \ar@{-o>}[r]\ar[d] & 0 \\
 0 \ar@{-o>}[r] &  \mathcal{F}(\mathfrak{m})  \ar@{-o>}[r] & E(K) \ar@{-o>}[r] & \tilde{E}(\FF_q) \ar@{-o>}[r] & 0
 }
\end{equation*} 
where $\tilde{E}$ denotes the reduction of $E$. The isogenous curves $\tilde{E}$ and $\tilde{E'}$ over $\FF_q$ have the same number of points, so the kernel and cokernel of this map have the same size. Hence $\zV = z(V_1)$. 

For any $N\geqslant 1$,
\begin{equation*}
 \frac{\mathcal{F}(\mathfrak{m}^N)}{\mathcal{F}(\mathfrak{m}^{N+1})} \cong \frac{\mathfrak{m}^N}{\mathfrak{m}^{N+1}} \cong \frac{\mathcal{F}'(\mathfrak{m}^N)}{\mathcal{F}'(\mathfrak{m}^{N+1})}
\end{equation*}
and so the same argument shows that $\zV = z(V_1) = z\bigl(V_N\colon \mathcal{F}'(\mathfrak{m}^{N}) \to \mathcal{F}(\mathfrak{m}^{N}) \bigr)$. 

We claim that if $N>\upsilon(\alpha)$ then $V_N$ maps $\mathcal{F}'(\mathfrak{m}^N)$ bijectively onto $\mathcal{F}(\mathfrak{m}^{N+\upsilon(\alpha)})$. If $t$ has valuation at least $N$, then the valuation of $\alpha\,t$ is smaller than the valuation of $u\cdot t^p$. Therefore $\upsilon(V_N(t)) = \upsilon(\alpha) + \upsilon(t)$. This shows that $V_N$ maps $\mathcal{F}'(\mathfrak{m}^N)$ injectively to $\mathcal{F}(\mathfrak{m}^{N+\upsilon(\alpha)})$. In particular, the kernel $\ker(V_N)$ is trivial. 

Let $s$ have valuation $\upsilon(s) \geqslant N+\upsilon(\alpha)$. Put $t_0 = s/\alpha$. Then $t_0$ is close to a zero of $g(t) = V_N(t) - s$. Namely $g(t_0) = \alpha \,a\, t_0^2 +\cdots +u\, t_0^p+\cdots$ has valuation at least $2\,\upsilon(s)-\upsilon(\alpha) \geqslant 2\,N+\upsilon(\alpha) > 2\,\upsilon(\alpha)$, if we write $G(t) = t+a\,t^2+\cdots$ for some $a\in\mathcal{O}$. Since $g'(t_0) = \alpha + 2\,\alpha\, a \, t_0 + \cdots$ has valuation $\upsilon(\alpha)$, Hensel's lemma shows that there is a $t$ close to $t_0$ such that $g(t) = 0$, i.e. such that $V_N(t) = s$.

We conclude that the cokernel of $V_N$ is equal to the index of $\mathcal{F}(\mathfrak{m}^{N+\upsilon(\alpha)})$ in $\mathcal{F}(\mathfrak{m}^N)$. Hence $\zV = z(V_N) = q^{\upsilon(\alpha)}$. In particular $\zV=1$ if the reduction is ordinary, i.e. when $\alpha$ is a unit in $\mathcal{O}$.

Let $\eLK$ be the ramification index of $L/K$. If the reduction is good ordinary, then the inertia group acts trivially on $T_p E'$, which is a $\ZZ_p$-module of rank 1. Hence $L/K$ is unramified and we have immediately that $\bigl(\frac{-1}{L/K}\bigr)=+1$.
\begin{lem}
The parity of  ${v(\alpha)}$ is equal to the parity of $\frac{p-1}{\eLK}$.
 \end{lem}
\begin{proof}
If $E$ has good ordinary reduction, then ${v(\alpha)}=0$, $e_{L/K}=1$ and $p-1$ is even so that the assertion is true.  If $E$ has good supersingular reduction, then since $E'(L)[p]$ contains a non-trivial point $P=(x'_P,y'_P)$, but the reduction does not contain a point of order $p$, there exist a $t_P = -x'_P/y'_P$ in the maximal ideal $\mathfrak{m}_L$ of $L$ such that $V_1(t_P) = 0$. From  $V_1 (t_P) = \alpha\,t_P +\cdots + u\, t_P^p +\cdots$, we see that the valuation of $\alpha t_P$ and $ut_P^p$ must cancel out. Hence $\upsilon_L(\alpha) = \eLK \cdot\upsilon(\alpha) = (p-1)\cdot \upsilon_L(t_P)$, where $\upsilon_L$ denotes the normalised valuation in $L$. So if the valuation of $t_P$ is odd, we have proved the assertion. 

Assume that $\upsilon_L(t_P)$ is even. Then $\upsilon(\alpha)$ is also even and we have to show that $\frac{p-1}{\eLK}$ is even. The extension $L/K(x'_P)$ is generated by $t_P$ whose square belongs to $K(x'_P)$; so this extension is either unramified quadratic or trivial. If $L=K(x'_P)$, then $\Gal(L/K)$ acts on the set of $\{x'_P\vert O\neq P\in E'(L)[p]\}$ and hence $[L:K]$ divides $\frac{p-1}{2}$, so $\frac{p-1}{[L:K]}$ is even. Otherwise, if $L$ is an unramified quadratic extension of $K(x'_P)$, then $\eLK = e_{K(x'_p)/K}$ and $p-1$ is divisible by $[L:K] = 2 \eLK f_{K(x'_p)/K}$. So $\frac{p-1}{\eLK}$ is even.
\end{proof}

Now we can conclude the proof of Proposition~\ref{good_prop}. Lemma~12 in~\cite{dok_isogeny}, whose proof is valid even if the characteristic of $K$ is not zero, says that
$\bigl(\frac{-1}{L/K}\bigr)=+1$ if and only if $q$ is a square or if $\frac{p-1}{e_{L/K}}$ is even. The previous lemma suffices now to conclude.
\end{proof}

 In the good supersingular case, $L/K$ may or may not be totally ramified. We illustrate this with two examples.

 We take $p=5$, $w>0$ any integer, and the curve $E$ given by the minimal Weierstrass equation
\begin{equation*}
 y^2\ = \ x^3 \ + \ T^w \cdot x \ + \ 1
\end{equation*}
over $\mathcal{O}=\FF_5[\![T]\!]$. The Hasse invariant is $\alpha = 2\cdot T^w$. The reduction is good, but supersingular. The division polynomial $f_V$ associated to the isogeny $V$ can be computed to be equal to
\begin{equation*}
 f_V(x) = 2\, T^w\,x^2+4\,T^{2w}\,x + (4+3\,T^{3w}+T^{6w})\,.
\end{equation*}

First we take the case $w=2\,m$ is even. Then we can make the substitution $X = T^m \cdot x$ to get
\begin{equation*}
 f_V(x) = 2\, X^2+4\,T^{3m}\,X + (4+3\,T^{6m}+T^{12m})\,.
\end{equation*}
We see that $K(x_P)$ is a quadratic unramified extension of $K$. The quantity $\frac{p-1}{\eLK}$ will certainly be even.

Now, take $w=2\,m-1$ to be odd with $m>1$. This time the substitution $X=T^m\cdot x$ gets us to
\begin{equation*}
 T\cdot f_V(x) = 2\, X^2+4\,T^{3m-2}\,X + T\cdot (4+3\,T^{6m-3}+T^{12m-6})\,.
\end{equation*}
Therefore $K(x_P)/K$ will be a ramified extension of degree $2$. The valuation of $x_P$ over $K(x_P)$ is odd, so we have to make a further extension $L/K(x_P)$, again ramified of degree $2$, to have a $p$-torsion point in $E'(L)$. So $\eLK =4$ and $\frac{p-1}{\eLK}$ is odd.

\subsection{Split multiplicative}

\begin{prop}\label{split_prop}
 Suppose $E/K$ has split multiplicative reduction. Then $w(E/K) = -1$, $\bigl(\frac{-1}{L/K}\bigr) = +1$, and $\sigma(E/K) = -1$.
\end{prop}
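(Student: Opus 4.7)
The plan is to exploit the Tate uniformisation available for split multiplicative reduction. The root number contribution is immediate: Proposition~19 of~\cite{rohrlich_wd} gives $w(E/K)=-1$, as already quoted in the paragraph preceding Proposition~\ref{good_prop}, so only $L$ and $\zV$ remain to be computed.

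For $L=K$, observe that multiplicative reduction forces the formal group $\mathcal{F}$ of $E$ over $\mathcal{O}$ to be $\hat{\mathbb{G}}_m$, so the reduction is ordinary and $E[F]=\mathcal{F}[p]\cong\mu_p$ as a finite flat group scheme over $\mathcal{O}$. Cartier duality then identifies $\ker V = E'[V]$ with the constant group scheme $\cyclic{p}$ already over $K$. Since $\mu_p(K)=\{1\}$ in characteristic $p$, one has $E(K)[F]=0$, and the relation $[p]=V\circ F$ gives $E'(K)[p]=E'(K)[V]=\cyclic{p}$; hence $L=K$ and $\bigl(\frac{-1}{L/K}\bigr)=+1$ trivially.

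For $\sigma$, Tate uniformisation provides an isomorphism of $\Gal(K^{\sep}/K)$-modules $E(K^{\sep})\cong (K^{\sep})^{\times}/\qE^{\ZZ}$ with $\qE\in K^{\times}$ of positive valuation; by splitness this is rational over $K$, and taking Galois invariants (using $H^1(G_K,\ZZ)=0$) gives $E(K)\cong K^{\times}/\qE^{\ZZ}$. Applying the $p$-power Frobenius to the uniformisation identifies $E'$ with the split multiplicative Tate curve of parameter $\qE^p$, so $E'(K)\cong K^{\times}/\qE^{p\ZZ}$ as well. Under these identifications, $F$ acts as $u\mapsto u^p$ and $V$ as the natural projection $K^{\times}/\qE^{p\ZZ}\twoheadrightarrow K^{\times}/\qE^{\ZZ}$ (so that $V\circ F = [p]$ is indeed $u\mapsto u^p$). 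Thus $V$ on $K$-points is surjective with kernel $\qE^{\ZZ}/\qE^{p\ZZ}\cong\cyclic{p}$, giving $\zV=1/p$; as an odd power of $p$ this is not a square, so $\sigma(E/K)=-1$. Combining, $\bigl(\frac{-1}{L/K}\bigr)\cdot\sigma(E/K)=(+1)\cdot(-1)=-1=w(E/K)$, as required.

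The main subtlety is the identification of the Tate parameter of the Frobenius twist as $\qE^p$, which has to be extracted cleanly from the compatibility of the Tate uniformisation with the $p$-power Frobenius. An alternative, more hands-on route is to use the Néron filtration of $E(K)$ by its connected part and component group: the ordinariness ($\alpha\in\mathcal{O}^{\times}$) makes $V$ an isomorphism on the formal groups (as in the proof of Proposition~\ref{good_prop}) and the identity on the toric parts, while on component groups $V$ is the natural surjection $\cyclic{pv(\qE)}\twoheadrightarrow\cyclic{v(\qE)}$ with kernel of order $p$; this recovers $\zV=1/p$ without invoking any uniformisation.
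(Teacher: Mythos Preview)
Your proof is correct and follows essentially the same approach as the paper: both use the Tate uniformisation to identify $V$ with the natural surjection $K^\times/\qE^{p\ZZ}\twoheadrightarrow K^\times/\qE^{\ZZ}$, giving $\zV=1/p$, and both observe that $E'$ already has a rational $p$-torsion point so that $L=K$. Your Cartier-duality justification of the latter and the alternative N\'eron-filtration computation are helpful elaborations, but the core argument is identical to the paper's.
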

\begin{proof}
Let $\qE\in K^{\times}$ be the parameter of the Tate curve which is isomorphic to $E$ over $K$. Then the isogenous curve $E'$ has parameter ${\qE}^p$ and the Frobenius map
\begin{equation*}
 \xymatrix{V\colon \frac{ K^{\times} }{ ({\qE}^p)^{\ZZ} } \ar@{-o>}[r] &  \frac{ K^{\times} }{ (\qE)^{\ZZ} }   }
\end{equation*}
is induced by the identity on $K^{\times}$. Hence $V$ has a kernel with $p$ elements and is surjective, so $z_V = \frac{1}{p}$ and $\sigma = -1$.

Since $E'$ has already a $p$-torsion point over $K$, we have $L=K$ and $\bigl(\frac{-1}{L/K}\bigr)=+1$.
\end{proof}

\subsection{Non-split multiplicative}
\begin{prop}\label{nonsplit_prop}
Suppose $E/K$ has non-split multiplicative reduction. Then $$w(E/K) =\bigl(\frac{-1}{L/K}\bigr)=\sigma(E/K) = +1.$$
\end{prop}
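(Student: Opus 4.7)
The plan is to mimic the split multiplicative computation of Proposition~\ref{split_prop}, but descend from the unramified quadratic extension $K'/K$ that splits the reduction. Let $\sigma$ generate $\Gal(K'/K)$; then $E/K$ admits a Tate parameter $\qE\in K^{\times}$ and, over $K'$, a Tate uniformisation presenting $E$ as the quadratic twist of a Tate curve by the unramified quadratic character of $K'/K$. Concretely, $\sigma$ acts on the Tate-parametrised side by $x\mapsto\sigma(x)^{-1}$, so taking $\sigma$-invariants gives
\begin{equation*}
 E(K) \;\cong\; \bigl\{[x]\in K'^{\times}/\qE^{\ZZ} \,:\, x\,\sigma(x)\in \qE^{\ZZ}\bigr\},
\end{equation*}
and similarly $E'(K)$ with $\qE^{p}$ in place of $\qE$. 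Exactly as in the split case, the Verschiebung $V$ is still induced by the identity map on $K'^{\times}$.

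I would then compute $\zV$ directly on these subgroups. A class in $\ker V$ is represented by some $\qE^{i}$ with $0\le i<p$, and the condition that it lie in $E'(K)$ is $\qE^{2i}\in \qE^{p\ZZ}$, which forces $i=0$ because $p$ is odd. For surjectivity, given $[y]\in E(K)$ with $y\,\sigma(y)=\qE^{n}$, one lifts $y$ to $y\,\qE^{k}$ where $k$ solves $n+2k\equiv 0\pmod{p}$; this is possible because $2$ is a unit modulo $p$. Hence $\zV=1$ is a square and $\sigma(E/K)=+1$.

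To identify $L$, I would describe the étale group scheme $E'[V]$ of order $p$. Over $K'$ it is generated by the class of $\qE$ in $K'^{\times}/\qE^{p\ZZ}$, and the twisted Galois action gives $\sigma\cdot\qE=\sigma(\qE)^{-1}=\qE^{-1}$ because $\qE\in K$. Thus the representation $\rho\colon \Gal(L/K)\to (\ZZ/p\ZZ)^{\times}$ is the non-trivial quadratic character cutting out $K'/K$, so $L=K'$. Because $L/K$ is unramified, the norm group $N_{L/K}(L^{\times})$ contains every unit of $\mathcal{O}$ by local class field theory; in particular $-1$ is a norm and $\bigl(\tfrac{-1}{L/K}\bigr)=+1$. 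Finally $w(E/K)=+1$ is the non-split case of Proposition~19 in~\cite{rohrlich_wd} already recalled at the start of this section.

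The only delicate point is pinning down the sign of the twist in the Tate uniformisation for non-split reduction; once that is in place, every assertion of the proposition reduces to the elementary fact that $2$ is invertible modulo the odd prime~$p$.
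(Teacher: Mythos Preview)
Your argument is correct. The description of $E(K)$ as the $\sigma$-invariants for the twisted action $[x]\mapsto[\sigma(x)^{-1}]$ on $K'^{\times}/\qE^{\ZZ}$ is the standard model for the quadratic twist of a Tate curve, and your kernel and cokernel computations for $V$ are accurate: the only place oddness of $p$ enters is in solving $2i\equiv 0$ and $n+2k\equiv 0\pmod p$, exactly as you say. Your identification of $L=K'$ via the $(-1)$-action on the generator $[\qE]$ of $E'[p](K')$ is also clean, and the unramifiedness of $K'/K$ then gives $\bigl(\tfrac{-1}{L/K}\bigr)=+1$ immediately.

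The paper proceeds differently: rather than computing directly on the twisted Tate model, it introduces the quadratic twist $E^{\dagger}$ (which is the split Tate curve $E_{\qE}$ over $K$) and uses the decomposition $E(K')\cong E(K)\oplus E^{\dagger}(K)$ up to $2$-torsion. Since both $E/K'$ and $E^{\dagger}/K$ have split multiplicative reduction, Proposition~\ref{split_prop} gives $\zV=\tfrac{1}{p}$ for each; multiplicativity of $z(\cdot)$ in the decomposition then forces $\zV=1$ for $E/K$. The paper also deduces $L=K'$ by noting that $E^{\dagger}(K)$ already contains a $p$-torsion point while $E(K)$ does not. So the paper's route is a reduction to the already-proven split case via the twist decomposition, whereas yours is a self-contained direct computation on the twisted uniformisation. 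Your approach has the virtue of making the vanishing of both kernel and cokernel completely explicit; the paper's has the virtue of recycling Proposition~\ref{split_prop} without reopening the Tate model.
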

\begin{proof}
 There is a quadratic extension $K'$ over which $E$ has split multiplicative reduction. So either $L=K$ or $L=K'$. Let $E^{\dagger}$ be the quadratic twist of $E$ over $K'$. Up to $2$-torsion groups, we have $E(L)= E(K)\oplus E^{\dagger}(K)$. Since $E^{\dagger}$ has split multiplicative reduction over $K$ there is a $p$-torsion point in $E^{\dagger}(K)$. So $L=K'$.

From the previous section, we know that $\zV$ for $E/L$ and $E^{\dagger}/K$ both are equal to $\frac{1}{p}$. So by the above formula for $E(L)$ up to $2$-torsion, we get that $\zV$ for $E/K$ is $1$. So $\sigma = +1$. Since $L/K$ is unramified, $ \bigl(\frac{-1}{L/K}\bigr) = +1$. 
\end{proof}

\subsection{Additive potentially multiplicative}
\begin{prop}
 Suppose $E/K$ has additive, potentially multiplicative reduction.
 Let $\chi\colon K^{\times} \to \{\pm 1\}$ be the character associated to the quadratic ramified extension over which $E$ has split multiplicative reduction.
 Then $w(E/K) =\bigl(\frac{-1}{L/K}\bigr)=\chi(-1)$ and $\sigma(E/K) = +1$.
\end{prop}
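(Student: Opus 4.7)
The argument should closely mirror the non-split multiplicative case treated in Proposition~\ref{nonsplit_prop}. The plan is to split the claim into three essentially independent computations.

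First I would verify $w(E/K) = \chi(-1)$ by direct appeal to Proposition~19 in~\cite{rohrlich_wd}, which gives an explicit formula for the local root number of an elliptic curve in terms of its reduction type; in the additive, potentially multiplicative case this formula reads $\chi(-1)$, where $\chi$ is precisely the character cutting out the quadratic ramified extension over which $E$ acquires split multiplicative reduction.

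Next I would identify the extension $L$. Let $K'/K$ be the ramified quadratic extension associated to $\chi$, and let $E^{\dagger}$ denote the quadratic twist of $E$ by $\chi$. By construction $E^{\dagger}/K$ has split multiplicative reduction, and the same holds for its Frobenius-dual $E'^{\dagger}$. By Proposition~\ref{split_prop} these twisted curves carry $K$-rational $p$-torsion points. Using the standard identification $E'(K') = E'(K) \oplus E'^{\dagger}(K)$ up to $2$-torsion, we see that $E'(K')$ contains a $p$-torsion point, so $L \subseteq K'$. Conversely $L \neq K$, since $E'$ also has additive reduction and so $E'(K)$ has no nontrivial $p$-torsion; hence $L = K'$. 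Because $L=K'$ is the ramified quadratic extension attached to $\chi$, the composition
\begin{equation*}
K^{\times} \to \Gal(L/K) \to (\cyclic{p})^{\times}
\end{equation*}
of the reciprocity map with $\rho$ takes values in $\{\pm1\}$ and is precisely $\chi$; this gives $\bigl(\frac{-1}{L/K}\bigr) = \chi(-1)$.

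Finally, for $\sigma(E/K) = +1$, since $V$ is defined over $K$ it is $\Gal(L/K)$-equivariant, so it respects the decomposition of $E(L)$ and $E'(L)$ into $\chi$-eigenspaces modulo $2$-torsion. Since $\zV$ is a power of $p$ and $p>3$, the $2$-primary discrepancy is harmless, and we obtain the multiplicativity
\begin{equation*}
\zV(E/L) \;=\; \zV(E/K)\cdot \zV(E^{\dagger}/K).
\end{equation*}
Both outer factors are now computable by the split multiplicative case (Proposition~\ref{split_prop}): $\zV(E/L)=\frac{1}{p}$ because $E/L$ has split multiplicative reduction, and $\zV(E^{\dagger}/K)=\frac{1}{p}$ for the same reason. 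Thus $\zV(E/K) = 1$ and $\sigma(E/K) = +1$.

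The main technical point is the multiplicativity of $\zV$ along the twist decomposition; once one is comfortable that $V$ being defined over $K$ forces it to split into its restriction on $E'(K)$ and its twist $V^{\dagger}$ on $E'^{\dagger}(K)$, the rest is a transparent bookkeeping. The identification $L=K'$, relying on the fact that $E'$ shares the reduction type of $E$, is the only other subtle input.
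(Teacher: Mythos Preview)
Your proposal is correct and matches the paper's own approach: the paper's proof is a three-line sketch that cites Rohrlich for $w(E/K)=\chi(-1)$, declares $\sigma=+1$ to follow ``the same as in the non-split multiplicative case'', and calls $\bigl(\frac{-1}{L/K}\bigr)=\chi(-1)$ ``clear'', and you have faithfully unpacked each of these using the twist decomposition exactly as in Proposition~\ref{nonsplit_prop}. One small remark: the step ``$E'$ has additive reduction and so $E'(K)$ has no nontrivial $p$-torsion'' is a shortcut; the cleaner justification, already implicit in your setup, is that $E'(K')[p]$ has order exactly $p$ and lies entirely in the $\chi$-eigenspace $E'^{\dagger}(K)[p]$, forcing $E'(K)[p]=0$.
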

\begin{proof}
 The root number is computed by Rohrlich~\cite[19.ii]{rohrlich_wd}. The proof that $\sigma = +1$ is the same as in the non-split multiplicative case. The formula $ \bigl(\frac{-1}{L/K}\bigr) = \chi(-1)$ is clear, too. 
\end{proof}

%

\section{Selmer groups}\label{selmer_sec}

We return to the global situation and we wish to define properly the Selmer groups involved in $p$-descent in characteristic $p$ using flat cohomology.

From now on, $K$ is again a global field with field of constants $\FF_q$ and $E/K$ is a semistable, non-isotrivial elliptic curve.
We denote by $\mathcal{E}$ the N\'eron model of $E/K$ over $C$ and $\mathcal{E}^0$ its connected component containing the identity.
Let $U$ be a dense open subset of $C$ such that $\mathcal{E}$ has good reduction on $U$. The group schemes $\mathcal{E}$ and $\mathcal{E}^0$ coincide over $U$ and we define for any $v\not\in U$ the group of connected components $\Phi_v$ in the fibre above $v$. So we have the following short exact sequence:
\begin{equation*}
0 \to \mathcal{E}^0 \to \mathcal{E} \to \bigoplus_{v\not \in U} \Phi_v \to 0.
\end{equation*} 

Following 2.2 in~\cite{kato_trihan}, the discrete $p^{\infty}$-Selmer group of $E/K$ is defined as
\begin{equation*}
\Sel_{p^\infty}(E/K) := \ker \Bigl[ \Hfl^1 (K,E[p^\infty])\rightarrow
                \prod_{v} \Hfl^1 (K_v,E) \Bigl]
\end{equation*} 
where $E[p^{\infty}]$ is the $p$-divisible group associated to $E$
and $\Hfl$ stands for flat cohomology.
It is known that $\Sel_{p^{\infty}}(E/K)$ fits into the following exact sequence:
\begin{equation}\label{ses1_seq}
  0 \to E(K)\otimes {}^{\QQ_p}\! /\!{}_{\ZZ_p} \to \Sel_{p^\infty}(E/K) \to \Sha(E/K)[p^\infty] \to 0 .
\end{equation} 
This follows from the fact that the Tate-Shafarevich group can also be computed using flat cohomology as
the kernel of  $\Hfl^1 (K,E)\to \prod_v \Hfl^1 (K_v,E)$ since for the elliptic curve $E$ over $K$ or $K_v$, the \'etale and flat cohomology groups coincide (see Theorem~3.9 in~\cite{Milne80}).
Note also that the dual of $\Sel_{p^{\infty}}(E/K)$ is a finitely generated $\ZZ_p$-module by the theorem of Mordell-Weil and the finiteness of $\Sha(E/K)[p]$ (see e.g.~\cite{ulmer_pdescent}).

Let $\phi\colon E\to E'$ be an isogeny of elliptic curves. The map $\phi$ induces a short exact sequence of sheaves in the flat topology
\begin{equation}\label{ses2_seq}
  \xymatrix@1{
  0 \ar[r] & E[\phi] \ar[r] &  E \ar[r]^{\phi} & E' \ar[r] & 0.
  }
\end{equation}
The Selmer group $\Sel_\phi(E/K)$ is defined to be the set of elements in $\Hfl^1(K,E[\phi])$ whose restrictions to $\Hfl^1(K_v,E[\phi])$ lie in the image of the connecting homomorphism $E(K_v)\to \Hfl^1(K_v,E[\phi])$ for all $v$. If $U$ is any open subset of $C$ where $E$ has good reduction, we can also describe $\Sel_\phi(E/K)$ as the kernel of the composed map
\begin{equation*}
  \xymatrix@1{
      \Hfl^1(U,\mathcal{E}[\phi]) \ar[r]  &
     \prod_{v\not\in U} \Hfl^1(K_v,E[\phi]) \ar[r]    &
     \prod_{v\not\in U} \Hfl^1(K_v,E)[\phi].
}
\end{equation*}
Passing to cohomology, the short exact sequence~\eqref{ses2_seq} induces the short exact sequence of finite groups
\begin{equation}\label{selsha_seq}
  \xymatrix@1{
     0 \ar[r] & E'(K)/\phi(E(K)) \ar[r] &  \Sel_\phi (E/K) \ar[r] & \Sha(E/K)[\phi] \ar[r] & 0,
  }
\end{equation} 
where $\Sha(E/K)[\phi]$ is the kernel of the induced map $\phiSha \colon \Sha(E/K) \to \Sha(E'/K)$.

%
%

\section{Global Euler characteristics}\label{duality_sec}

We prove in the next three sections a few results on global dualities for the $p$-primary part of the Tate-Shafarevich group in characteristic $p$ using flat cohomology. The main reference will be~\cite{milne}, but we wish to point the reader to related results in~\cite{GT} and~\cite{Go}. 

Note that most results in these three sections do not need any condition on the reduction. Also, except where mentioned, the characteristic $p$ can be any prime. 

We give a short review of the Oort-Tate classification of finite flat group schemes of order $p$ (see~\cite{oort_tate} for details). Let $X$ be a scheme of characteristic $p>0$. The data of a finite flat group scheme $N$ of order $p$ over $X$ is equivalent to the data of an invertible sheaf $\mathcal{L}$, a section $a\in H^0(C,\mathcal{L}^{\otimes(p-1)})$ and a section $b\in H^0(C,\mathcal{L}^{\otimes(1-p)})$ such that $a\otimes b=0$. We use the notation $N_{\mathcal{L},a,b}$. If $N$ is of height one, then $a=0$. The Cartier dual of $N_{\mathcal{L},a,b}$ is $N_{\mathcal{L}^{-1},b,a}$.

For a scheme $S$ of characteristic $p>0$ and a finite flat group scheme $N/S$ we define the Euler characteristic of $N/S$ as
\begin{equation*}
\chi(S,N) := \prod_i\Bigl(\#\Hfl^i(S,N)\Bigr)^{(-1)^i}
\end{equation*}
whenever the groups $\Hfl^i(S,N)$ are finite.
\begin{lem}\label{globalchi_lem} 
Assume that the prime $p$ is odd. Let $N$ be a finite flat group scheme of order $p$ over $C$. Assume that the Cartier dual $N^D$ of $N$ is of height 1. Then the groups $\Hfl^i(C,N)$ are finite and $\chi(C,N)$ is a square in $\QQ^{\times}$.
\end{lem}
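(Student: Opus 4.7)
The plan is to combine the Oort--Tate description of $N$ with an Artin--Schreier-type resolution of $N$ whose Euler characteristic can be read off by Riemann--Roch.

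First, I would unwind the hypothesis via Oort--Tate. Writing $N = N_{\mathcal{L},a,b}$, the duality recalled just above the lemma gives $N^{D} = N_{\mathcal{L}^{-1},b,a}$, so the condition that $N^{D}$ be of height one translates into the vanishing of its first Oort--Tate parameter, i.e.\ $b=0$. Thus $N = N_{\mathcal{L},a,0}$ for some invertible sheaf $\mathcal{L}$ on $C$ and some $a \in H^{0}(C,\mathcal{L}^{\otimes(p-1)})$. The key structural fact I would use is that such an $N$ is precisely the kernel, in the fppf topology, of an Artin--Schreier-type isogeny
\begin{equation*}
 0 \longrightarrow N \longrightarrow \mathbb{V}(\mathcal{L}) \xrightarrow{\ F - a\ } \mathbb{V}(\mathcal{L}^{\otimes p}) \longrightarrow 0,
\end{equation*}
where $\mathbb{V}(\mathcal{L})$ is the additive group scheme attached to $\mathcal{L}$, the relative Frobenius $F$ takes values in $\mathbb{V}(F_{C}^{*}\mathcal{L}) = \mathbb{V}(\mathcal{L}^{\otimes p})$, and multiplication by the section $a$ defines an $\mathcal{O}_{C}$-linear map $\mathbb{V}(\mathcal{L}) \to \mathbb{V}(\mathcal{L}\otimes \mathcal{L}^{\otimes(p-1)}) = \mathbb{V}(\mathcal{L}^{\otimes p})$. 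Surjectivity in the flat topology holds because $F-a$ is a finite flat isogeny of degree $p$; for $\mathcal{L}=\mathcal{O}_{C}$ and $a=1$ this recovers the classical Artin--Schreier sequence with kernel $\mathbb{Z}/p\mathbb{Z}$.

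Second, since $\mathbb{V}(\mathcal{L})$ is a quasi-coherent group scheme, one has $H^{i}_{\mathrm{fl}}(C,\mathbb{V}(\mathcal{L})) = H^{i}(C,\mathcal{L})$, which are finite-dimensional $\mathbb{F}_{q}$-vector spaces vanishing for $i\geqslant 2$, and similarly for $\mathcal{L}^{\otimes p}$. The long exact sequence in flat cohomology attached to the above short exact sequence then shows that the groups $H^{i}_{\mathrm{fl}}(C,N)$ are finite for $i=0,1,2$ and vanish for $i\geqslant 3$, and the usual multiplicativity of Euler characteristics in a three-term exact sequence yields
\begin{equation*}
 \chi(C,N) \;=\; \frac{\chi(C,\mathbb{V}(\mathcal{L}))}{\chi(C,\mathbb{V}(\mathcal{L}^{\otimes p}))} \;=\; q^{\,\chi_{\mathrm{coh}}(\mathcal{L})-\chi_{\mathrm{coh}}(\mathcal{L}^{\otimes p})}.
\end{equation*}

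Third, Riemann--Roch on the curve $C$ of genus $g$ gives $\chi_{\mathrm{coh}}(\mathcal{M}) = \deg\mathcal{M} + 1 - g$ for any line bundle $\mathcal{M}$, so $\chi_{\mathrm{coh}}(\mathcal{L}) - \chi_{\mathrm{coh}}(\mathcal{L}^{\otimes p}) = (1-p)\deg\mathcal{L}$ and hence $\chi(C,N) = q^{(1-p)\deg\mathcal{L}}$. Because $p$ is odd, $1-p$ is even, so the exponent is even and $\chi(C,N)$ is a square in $\mathbb{Q}^{\times}$.

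The main obstacle I anticipate is the first step: identifying $N_{\mathcal{L},a,0}$ cleanly with the kernel of $F-a$ on additive bundles attached to $\mathcal{L}$, and verifying fppf-surjectivity uniformly (in particular in the supersingular region where $a$ vanishes and the kernel becomes infinitesimal). Once this Artin--Schreier-type presentation is in place, both finiteness and the square statement fall out formally from Riemann--Roch and the parity of $p-1$.
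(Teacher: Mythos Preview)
Your argument is correct and coincides with the paper's own proof: the authors also pass from the hypothesis to $b=0$, use the Artin--Schreier-type resolution $0\to N\to \mathcal{L}\to \mathcal{L}^{\otimes p}\to 0$ given by $z\mapsto z^{\otimes p}-a\otimes z$ (citing Milne, Example~III.5.4, for exactness), and conclude via Riemann--Roch that the exponent is divisible by $p-1$. Your anticipated obstacle is dispatched in the paper by reference rather than by a self-contained argument, and your sign in the exponent, $(1-p)\deg\mathcal{L}$, is in fact the correct one---the paper's displayed $(p-1)\deg\mathcal{L}$ is a harmless slip since only parity matters.
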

\begin{proof}
The cohomology is finite by Lemma~III.8.9 in~\cite{milne} since $N$ is finite flat.
If $N^D$ has height $1$ then $N$ corresponds to a group scheme $N_{\mathcal{L},a,b}$ with $b=0$.
Now we follow the explanation after problem~III.8.10 in~\cite{milne}.
Since $N$ is the dual of a group scheme of height 1, we have that there is a sequence
\begin{equation*}
  \xymatrix@R-4ex@C+1ex{0\ar[r]& N \ar[r] & \mathcal{L}\ar[r] & \mathcal{L}^{\otimes p}\ar[r] &0,\\
                           &          &  z \ar @{|->} [r] & z^{\otimes p}-a\otimes z & }
\end{equation*}
which is exact by the definition of $N_{\mathcal{L},a,b}$. See also Example~III.5.4 in~\cite{milne}.
Hence we have that $\chi(C,N) = q^{\chi(\mathcal{L})-\chi(\mathcal{L}^{\otimes p})}$. Using Riemann-Roch, we get
\begin{align*}
  \chi(\mathcal{L}) &= \deg(\mathcal{L}) + 1 - g\\
  \chi(\mathcal{L}^{\otimes p}) &= p\cdot \deg(\mathcal{L}) + 1 - g
\end{align*}
and therefore we find the formula
\begin{equation*}
 \chi(C,N) = q^{(p-1)\deg \mathcal{L}}.
\end{equation*}
So the lemma follows from the fact that $p$ is odd.
\end{proof}

For any place $v$ in $K$, we denote by $\vert\cdot\vert_v$ the normalised absolute value of the completion $K_v$. In particular the absolute value of a uniformiser is $q_v^{-1}$ where $q_v$ denotes the number of elements in the residue field.
\begin{lem}\label{localchi_lem} 
Let $N=N_{\mathcal{L},a,b}$ be a finite flat group scheme of order $p>2$ over the ring $O_v$ of integers in $K_v$. Assume that $N_{K_v}$ is \'etale. Then the Euler characteristic of $N$ is well-defined and we have
\begin{equation*}
\chi(O_v,N)\equiv\vert a\vert_v^{-1}
\end{equation*} 
modulo squares in $\QQ^{\times}$.
\end{lem}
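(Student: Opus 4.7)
The plan is to adapt the argument of Lemma~\ref{globalchi_lem} to the local setting: build a two-term resolution of $N$ by copies of $\mathbb{G}_a$ and compute cohomology directly. Since $K_v$ has characteristic $p$, the Oort--Tate relation $a\otimes b = 0$ holds in $O_v$, and the hypothesis that $N_{K_v}$ is \'etale forces $a$ to be a unit on the generic fibre; hence $a\neq 0$ in $O_v$ and $b=0$. Because $O_v$ is local, $\mathcal{L}$ is trivial, and after choosing a trivialisation the datum $N_{\mathcal{L},a,0}$ collapses to the fppf-exact sequence
\begin{equation*}
 0\ \longrightarrow\ N\ \longrightarrow\ \mathbb{G}_{a,O_v}\ \xrightarrow{\ \phi\ }\ \mathbb{G}_{a,O_v}\ \longrightarrow\ 0,\qquad \phi(z)=z^p-az,
\end{equation*}
fppf-surjectivity being clear since $z^p-az-s=0$ defines a finite flat cover of degree $p$ over any base.

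Next I would use that $\mathbb{G}_a$ is quasi-coherent and $\Spec O_v$ is affine, so that $\Hfl^i(O_v,\mathbb{G}_a)=0$ for $i\geqslant 1$ and equals $O_v$ for $i=0$. The long exact sequence of flat cohomology then collapses to
\begin{equation*}
 0\ \to\ \Hfl^0(O_v,N)\ \to\ O_v\ \xrightarrow{\ \phi\ }\ O_v\ \to\ \Hfl^1(O_v,N)\ \to\ 0
\end{equation*}
with $\Hfl^i(O_v,N)=0$ for $i\geqslant 2$. Thus once finiteness is shown, $\chi(O_v,N)=\#\ker(\phi)/\#\coker(\phi)$.

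The main analytic step is to evaluate $\phi\colon O_v\to O_v$ against the $\mathfrak{m}$-adic filtration. Observe first that $\phi$ is $\FF_p$-linear, because $z\mapsto z^p$ is additive in characteristic $p$. For $v(z)=k$ with $k(p-1)>v(a)$ one has $v(z^p)>v(az)$, so $\phi$ sends $\mathfrak{m}^k$ into $\mathfrak{m}^{k+v(a)}$, and since $\phi'(z)\equiv -a$ identically, a standard Hensel/Newton argument upgrades this to a bijection $\mathfrak{m}^K\xrightarrow{\sim}\mathfrak{m}^{K+v(a)}$ for all sufficiently large $K$. Applying the snake lemma to the two short exact sequences $0\to\mathfrak{m}^K\to O_v\to O_v/\mathfrak{m}^K\to 0$ and $0\to\mathfrak{m}^{K+v(a)}\to O_v\to O_v/\mathfrak{m}^{K+v(a)}\to 0$, joined columnwise by $\phi$, identifies $\ker(\phi)$ and $\coker(\phi)$ with those of the induced map $\bar\phi\colon O_v/\mathfrak{m}^K\to O_v/\mathfrak{m}^{K+v(a)}$ between finite groups. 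Counting gives
\begin{equation*}
 \chi(O_v,N)\ =\ \frac{\#(O_v/\mathfrak{m}^K)}{\#(O_v/\mathfrak{m}^{K+v(a)})}\ =\ q_v^{-v(a)}\ =\ |a|_v,
\end{equation*}
which is congruent to $|a|_v^{-1}$ modulo squares because $|a|_v/|a|_v^{-1}=|a|_v^{2}$ is the square of a positive rational number. The only delicate point is the valuation analysis producing the bijection on $\mathfrak{m}^K$; everything else is formal manipulation around the Oort--Tate resolution.
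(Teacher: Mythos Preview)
Your proof is correct. The paper takes a much shorter route: after trivialising $\mathcal{L}=c^{-1}O_v$ to obtain $N\cong N_{O_v,\,ac^{p-1},\,bc^{1-p}}$, it simply cites Milne's \emph{Arithmetic Duality Theorems} (Remark~III.7.6 and the example following Theorem~III.1.19) for the exact value $\chi(O_v,N)=|ac^{p-1}|_v^{-1}$, and finishes by noting that $|c^{p-1}|_v$ is a square since $p-1$ is even. You instead compute $\chi$ from scratch: build the Artin--Schreier resolution $0\to N\to\mathbb{G}_a\to\mathbb{G}_a\to 0$, use the vanishing of higher flat cohomology of $\mathbb{G}_a$ over an affine base, and then evaluate $\#\ker\phi/\#\coker\phi$ by a direct $\mathfrak{m}$-adic filtration and Hensel argument. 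This is essentially what lies behind Milne's cited passages, so your argument is the unrolled version of the citation; it has the virtue of being self-contained and of reusing exactly the techniques already present in the paper's Lemma~\ref{globalchi_lem} and Proposition~\ref{good_prop}.

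One small point worth tightening: when you trivialise $\mathcal{L}$ and write $\phi(z)=z^p-az$, the symbol $a$ has silently become the trivialised section (the paper's $ac^{p-1}$). Your filtration computation then yields $\chi=q_v^{-v(a')}=|a'|_v$ for this trivialised $a'$. To match the stated $|a|_v^{-1}$ you invoke $|a'|_v\equiv|a'|_v^{-1}\pmod{\square}$, which you do note, but you should also record that $|a'|_v/|a|_v=|c|_v^{p-1}$ is itself a square because $p>2$ makes $p-1$ even. The paper makes this step explicit; in your write-up it is absorbed into the phrase ``after choosing a trivialisation'', and a reader could miss that the hypothesis $p>2$ is being used there.
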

\begin{proof} 
The invertible sheaf $\mathcal{L}$ is $c^{-1}\cdot O_v$ for some $c\in K_v^{\times}$. Then by~III.0.9.(c) in~\cite{milne}, we have $N_{\mathcal{L},a,b}\cong N_{O_v,ac^{p-1},bc^{1-p}}$. Using Remark~III.7.6 and the Example after Theorem~III.1.19 on page~244 of~\cite{milne}, we have $\chi(O_v,N)=\vert a\cdot c^{p-1}\vert_v^{-1}\equiv \vert a\vert_v^{-1}$ modulo squares in $\QQ^{\times}$.
\end{proof}

For a scheme $S$ of characteristic $p>0$ and a scheme $X/S$, we denote by $X'$ the fibre product $X\times_S S$ where the map $S\to S$ in this product is the absolute Frobenius of $S$. By the universal property of the fibre product, we have a map $F\colon X\to X'$ called the relative Frobenius. If moreover $X/S$ is a flat group scheme, then there exists a map $V\colon X'\to X$ called the Verschiebung such that $V\circ F$ and $F\circ V$ induce $[p]$, the multiplication by $p$ (see~\cite{SGA3}, VII). In particular, $F\colon E\to E'$ is a $p$-isogeny of elliptic curves which extends to the N\'eron models of $E$ and $E'$ by its universal property. Since the N\'eron model of $E'$ is $\mathcal{E}'$, this map is just the relative Frobenius $F\colon \mathcal{E}\to \mathcal{E}'$.

Over the field $K$, or more generally over any open subset $U$ in $C$ where $E$ has good reduction, Proposition~2.1 of~\cite{ulmer_pdescent} shows that $E[F]=N_{\oomega^{-1},0,\alpha}$ and $E[V]=N_{\oomega,\alpha,0}$, where $\alpha$ is the Hasse invariant of $E$ and where $\oomega$ is the invertible sheaf $\pi_{*} \Omega^1_{E/K}$ with $\pi\colon E \to \Spec(K)$ being the structure morphism.

\begin{prop}\label{global_chi_prop} 
Let $E/K$ be a non-isotrivial elliptic curve. There exists a dense open subset $U$ of $C$ such that $E$ has everywhere good ordinary reduction and $\chi(U,\mathcal{E}[F])$ is a well-defined square in $\QQ^{\times}$.
\end{prop}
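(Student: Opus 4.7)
Since $E$ is non-isotrivial, and hence ordinary, the Hasse invariant $\alpha$ is a nonzero global section of $\oomega^{\otimes(p-1)}$. The plan is to take $U$ to be the dense open complement in $C$ of the bad reduction locus of $E$ together with the zero locus of $\alpha$; on such a $U$ the curve $E$ has everywhere good ordinary reduction and the description from Proposition~2.1 of~\cite{ulmer_pdescent} gives $\mathcal{E}[F]|_U \cong N_{\oomega^{-1},0,\alpha}$ with $\alpha$ a unit. Finiteness of the flat cohomology groups $\Hfl^i(U, \mathcal{E}[F])$, and hence well-definedness of $\chi(U, \mathcal{E}[F])$, should follow from Lemma~III.8.9 of~\cite{milne}.

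The strategy is to reduce to the projective situation already handled in Lemma~\ref{globalchi_lem}. Because $\alpha$ is defined globally, the Oort-Tate triple $(\oomega, \alpha, 0)$ defines a finite flat group scheme $\mathcal{N}$ over the whole of $C$ whose Cartier dual $\mathcal{N}^D = N_{\oomega^{-1}, 0, \alpha}$ restricts to $\mathcal{E}[F]$ on $U$. Since $\mathcal{N}^D$ has height one (its $a$-parameter vanishes), Lemma~\ref{globalchi_lem} applies directly to $\mathcal{N}$ and yields $\chi(C, \mathcal{N}) = q^{(p-1)\deg \oomega}$, which is a square because $p$ is odd.

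To pass from $\chi(C, \mathcal{N})$ to $\chi(U, \mathcal{E}[F]) = \chi(U, \mathcal{N}^D)$ I would combine flat duality on $C$ with the excision long exact sequence. Flat Cartier-Artin-Verdier duality on the projective curve $C$ pairs the cohomology of $\mathcal{N}$ with that of $\mathcal{N}^D$, so $\chi(C, \mathcal{N}^D)$ is also a square. The excision sequence for $U \hookrightarrow C$ then gives
\begin{equation*}
 \chi(U, \mathcal{N}^D) \ = \ \chi(C, \mathcal{N}^D) \cdot \prod_{v \notin U} \chi_v(\mathcal{N}^D)^{-1},
\end{equation*}
where $\chi_v$ is the alternating product of the orders of the local cohomology groups $\Hfl^i_v(C, \mathcal{N}^D)$. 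By local flat duality each $\chi_v(\mathcal{N}^D)$ agrees with $\chi_v(\mathcal{N})$ modulo squares, and Lemma~\ref{localchi_lem} applies to $\mathcal{N}_{K_v} = N_{\oomega, \alpha, 0}|_{K_v}$, which is étale since $\alpha$ is a nonzero element of $K$; it gives $\chi_v(\mathcal{N}) \equiv \vert\alpha\vert_v^{-1}$ modulo squares.

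The main obstacle is controlling the product of these local factors. Because the divisor of $\alpha$ is supported on $C \setminus U$, the product collapses to
\begin{equation*}
 \prod_{v \notin U}\vert\alpha\vert_v^{-1} \ = \ q^{\sum_v [\kappa_v:\FF_q]\cdot v(\alpha)} \ = \ q^{\deg(\divi \alpha)} \ = \ q^{(p-1)\deg \oomega},
\end{equation*}
whose exponent is even because $p-1$ is even. So the total local correction is a square, and together with the squareness of $\chi(C, \mathcal{N}^D)$ we obtain that $\chi(U, \mathcal{E}[F])$ is a square in $\QQ^\times$, as required. The hardest part will be supplying the precise form of flat Artin-Verdier duality on $C$ and the comparison between $\chi_v(\mathcal{N})$ and $\chi_v(\mathcal{N}^D)$ modulo squares, both of which should be accessible within Milne's framework already in force in this paper.
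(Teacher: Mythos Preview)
Your overall plan coincides with the paper's: extend $E[F]$ to a finite flat group scheme over $C$, compare $\chi(U,\mathcal{E}[F])$ with $\chi(C,\cdot)$ through a long exact sequence, invoke Lemmas~\ref{globalchi_lem} and~\ref{localchi_lem} for the global and local pieces, and close with the product formula for $\alpha$. The paper, however, organises the duality more efficiently. It works throughout with the object you call $\mathcal{N}$ (its $\mathcal{N}^D=N_{\mathcal{O}_C(-W),\alpha,0}$), to which \emph{both} lemmas apply directly, and uses the compactly supported sequence
\[
\cdots\to \Hflc^i(U,\mathcal{N}^D)\to \Hfl^i(C,\mathcal{N}^D)\to \prod_{v\notin U}\Hfl^i(O_v,\mathcal{N}^D)\to\cdots
\]
from Milne~III.0.4.(c), together with a single application of global duality on $U$ (Milne~III.8.2) identifying $\Hflc^i(U,\mathcal{N}^D)$ with the dual of $\Hfl^{3-i}(U,\mathcal{E}[F])$. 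This immediately yields $\chi(U,\mathcal{E}[F])=\chi(C,\mathcal{N}^D)\big/\prod_{v\notin U}\chi(O_v,\mathcal{N}^D)$, and the two lemmas finish the job. Your route through excision forces you to compute $\chi(C,\cdot)$ and the local contributions on the ``wrong'' side (your $\mathcal{N}^D$, to which neither lemma applies), and then to undo this with two further dualities, one global on $C$ and one local.

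The local step is where your argument has a genuine gap rather than just a detour. The quantities $\chi_v(\mathcal{N}^D)$ appearing in your excision sequence are Euler characteristics of \emph{local cohomology with support} $\Hfl^i_v(C,\cdot)$, whereas Lemma~\ref{localchi_lem} computes $\chi(O_v,\cdot)$; these are not the same invariant, and your sentence ``Lemma~\ref{localchi_lem} \dots\ gives $\chi_v(\mathcal{N})\equiv|\alpha|_v^{-1}$'' silently identifies them. One can relate the two via a further local duality on $\Spec O_v$ (pairing $\Hfl^i(O_v,N)$ with $\Hfl^{3-i}_v(O_v,N^D)$), but then the local duality you invoke to pass from $\chi_v(\mathcal{N}^D)$ to $\chi_v(\mathcal{N})$ becomes redundant: what you actually need is $\chi_v(\mathcal{N}^D)\equiv\chi(O_v,\mathcal{N})$ modulo squares. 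In short, the missing ingredient is exactly the compactly supported sequence the paper uses, which produces the $\Hfl^i(O_v,\cdot)$ terms on the nose and makes the local comparison a non-issue. (Two smaller points: the paper obtains the extension over $C$ via Milne~B.4 rather than asserting that $\oomega$ and $\alpha$ extend globally, and it deduces finiteness of $\Hfl^i(U,\mathcal{E}[F])$ only later, from the exact sequence in Proposition~\ref{global_duality_prop}, rather than from Lemma~III.8.9.)
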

\begin{proof} 
By the Oort-Tate classification, $E[F]/K$ is isomorphic to $N_{\oomega^{-1},0,\alpha}$. By Proposition B.4 in~\cite{milne} and its proof, it extends to a finite flat group scheme ${\mathcal N}/C$ of order $p$ of the form $N_{\mathcal{O}_C(W),0,\alpha}$ for some Weil divisor $W\leqslant 0$ such that $(\alpha)\geqslant W$. Let $U_1$ be a dense open subset of $C$ over which $\mathcal{E}$ has good reduction. As in the proof of Theorem~III.8.2 in~\cite{milne} on page~291, we replace $U_1$ by a smaller open set $U_2$, over which $\mathcal{N}\vert_{U_2}\simeq\mathcal{E}[F]\vert_{U_2}$. Finally, we set $U$ equal to the open subset of $U_2$ where we have removed all places $v$ for which $E/K$ has good supersingular reduction.

Write $\mathcal{N}^{D}$ for the Cartier dual of $\mathcal{N}$.
By Proposition~III.0.4.(c) and Remark~III.0.6.(b) in~\cite{milne}, we have a long exact sequence
\begin{equation*}
 \xymatrix@1{\cdots \ar[r]& \Hflc^i\bigl(U,\mathcal{N}^{D} \bigr)\ar[r] &
  \Hfl^i\bigl(C, \mathcal{N}^D\bigr)\ar[r] &
  \prod_{v\not\in U} \Hfl^i\bigl(O_v, \mathcal{N}^D\bigr)\ar[r]&
  \cdots.}
\end{equation*}
 Global duality (Theorem~III.8.2 in~\cite{milne}) shows that 
 \begin{equation*}
   \Hflc^i\bigl(U,\mathcal{N}^{D} \bigr) = \Hflc^i\bigl(U, \mathcal{E}'[V]\bigr) \quad\text{ is dual to}\quad 
   \Hfl^i\bigl(U, \mathcal{E}[F]\bigr).
 \end{equation*}
 By the multiplicative property of the Euler characteristic, we get
 \begin{equation*}
    \chi(U,\mathcal{E}[F])=\frac{\chi(C,\mathcal{N}^D)}{\prod_{v\not\in U}\chi(O_v,\mathcal{N}^D)}.
 \end{equation*}
 Since $\mathcal{N}^{D} = N_{\mathcal{O}_C(-W), \alpha, 0}$ is finite flat of order $p$ over $C$, 
 Lemma~\ref{globalchi_lem} shows that $\chi(C,\mathcal{N}^D)$ is a square.
 Furthermore, Lemma~\ref{localchi_lem} yields 
 \begin{equation*}
   \chi(U,\mathcal{E}[F]) \equiv \prod_{v\not\in U}\chi(O_v,\mathcal{N}^D)^{-1} \equiv \prod_{v\not\in U}\vert\alpha\vert_v \pmod{\square}.
 \end{equation*}
  Since the places of $U$ are places of good ordinary reduction where $\vert\alpha\vert_v$ is a square by Proposition~\ref{good_prop}, we have, using the product formula,
  \begin{equation*}
    \chi(U,\mathcal{E}[F]) \equiv \prod_{v\not\in U}\vert\alpha\vert_v^{-1}\equiv\prod_{v}\vert\alpha\vert_v^{-1}=1\pmod{\square}.
    \qedhere
  \end{equation*}
\end{proof}

%
%

\section{The Cassels-Tate pairing}\label{cassels-tate}

 Recall that there exist a pairing (proof of Theorem II.5.6 in~\cite{milne}) called the Cassels-Tate pairing 
\begin{equation*}
\langle\!\langle\cdot,\cdot\rangle\!\rangle \colon \Sha(E/K)\times \Sha(E/K)\to {}^{\QQ}\!/\!{}_{\ZZ}.
\end{equation*}
As claimed in Proposition~III.9.5 in~\cite{milne} its left and right kernels are the divisible part $\divi (\Sha(E/K))$ of the Tate-Shafarevich group. We are calling the attention of the reader to the fact that the initial proof in~\cite{milne} is wrong as noticed by D. Harari
and T. Szamuely in~\cite{HS}. The first correct
published proofs that the Cassels-Tate pairing of~\cite{milne}, Theorem II.5.6(a), annihilates only maximal divisible subgroups appear in~\cite{HS} (for prime-to-$p$ primary components) and in~\cite{Go} (for
$p$-primary components) when the 1-motive considered in these references is taken to be ($0\to  E$). This pairing is alternating and hence the order of $\Sha(E/K)_{\divi}$ is a square. This last fact is not always true if we consider general abelian varieties.

\begin{lem}\label{adjoint_lem}
 Let $\phi\colon E\to E'$ be an isogeny of elliptic curves and $\hat \phi$ the dual isogeny. Then the induced map $\phiSha\colon \Sha(E/K)\to\Sha(E'/K)$ and $\hatphiSha\colon\Sha(E'/K)\to\Sha(E/K)$ are adjoints with respect to the Cassels-Tate pairings, i.e.
$$\langle\!\langle\phiSha(\eta),\xi\rangle\!\rangle_{E'}=\langle\!\langle \eta,\hat\phiSha(\xi)\rangle\!\rangle_{E}$$
for every $\eta\in \Sha(E/K)$ and $\xi\in \Sha(E'/K)$.
\end{lem}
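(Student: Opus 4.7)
The plan is to reduce the claim to the fundamental adjointness of an isogeny and its dual with respect to the Weil pairing, combined with the cocycle description of the Cassels--Tate pairing. Throughout I work with flat cohomology so that the case $p\mid\deg\phi$ is covered.

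First I would fix an integer $n$ annihilating both $\eta$ and $\xi$ and divisible by $\deg\phi$, so that $\phi\circ\hat\phi=[n]_{E'}$ and $\hat\phi\circ\phi=[n]_E$. Recall from the proof of Theorem~II.5.6 in~\cite{milne} (or its corrected flat-cohomology variant in~\cite{Go}) that the Cassels--Tate pairing on $\Sha(E/K)[n]$ may be computed from the cup product
\begin{equation*}
\cup_{e_n^{E}}\colon \Hfl^1(K,E[n])\otimes\Hfl^1(K,E[n])\longrightarrow \Hfl^2(K,\mu_n)
\end{equation*}
induced by the Weil pairing $e_n^E\colon E[n]\times E[n]\to\mu_n$, combined with the invariant maps and local corrections coming from Poitou--Tate duality. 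Concretely, one lifts $\eta\in\Sha(E/K)[n]$ to $\tilde\eta\in\Hfl^1(K,E[n])$, and for each place $v$ writes the localisation $\tilde\eta_v$ as the image of some $P_v^\eta\in E(K_v)/n$ via the Kummer boundary; the pairing is then (up to sign conventions) the sum of local terms built out of $e_n^E$.

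Next I would record the key compatibility: for the isogeny $\phi\colon E\to E'$ the dual isogeny $\hat\phi\colon E'\to E$ is characterised by the Weil pairing adjunction
\begin{equation*}
 e_n^{E'}\bigl(\phi(P),Q'\bigr) \;=\; e_n^{E}\bigl(P,\hat\phi(Q')\bigr)
\end{equation*}
for $P\in E[n]$, $Q'\in E'[n]$, which is valid as group-scheme pairings (hence at the level of flat cohomology), and which is just the definition of the dual isogeny via the Poincar\'e bundle. The induced maps on flat cohomology fit into a commutative square
\begin{equation*}
\xymatrix@R=4mm{
 \Hfl^1(K,E[n])\otimes\Hfl^1(K,E'[n]) \ar[r]^-{\id\otimes\hat\phi_*} \ar[d]_{\phi_*\otimes\id} &
 \Hfl^1(K,E[n])\otimes\Hfl^1(K,E[n]) \ar[d]^{\cup_{e_n^E}} \\
 \Hfl^1(K,E'[n])\otimes\Hfl^1(K,E'[n]) \ar[r]^-{\cup_{e_n^{E'}}} &
 \Hfl^2(K,\mu_n)
}
\end{equation*}
and likewise for the corresponding local squares at each place $v$.

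Finally I would run the definition of Cassels--Tate through this diagram. Lift $\eta$ to $\tilde\eta\in\Hfl^1(K,E[n])$ and $\xi$ to $\tilde\xi\in\Hfl^1(K,E'[n])$; then $\phi_*\tilde\eta$ lifts $\phiSha(\eta)\in\Sha(E'/K)[n]$ and $\hat\phi_*\tilde\xi$ lifts $\hatphiSha(\xi)\in\Sha(E/K)[n]$ (one checks that the local conditions defining the Selmer groups are preserved, since $\phi$ commutes with the Kummer boundary maps). Applying the commutative square to the local and global cup-product terms entering the definition of the pairing transforms $\langle\!\langle\phiSha(\eta),\xi\rangle\!\rangle_{E'}$ termwise into $\langle\!\langle\eta,\hatphiSha(\xi)\rangle\!\rangle_{E}$, which is exactly the asserted adjointness.

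The main obstacle is bookkeeping rather than mathematical depth: one must verify that the flat-cohomological Cassels--Tate pairing of~\cite{Go} really is computed by this Weil-pairing cup product even when $p\mid n$ and the group schemes $E[\phi]$, $E'[\hat\phi]$ are not \'etale, and that all local lifts of $\eta$ and $\xi$ are compatible under the pushforward by $\phi$ and $\hat\phi$. Both points follow by chasing through the construction in~\cite{milne} and~\cite{Go}, but they require care with signs and with which duality statement (Cartier versus autoduality of $E$) is being invoked.
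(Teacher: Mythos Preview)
Your proposal is correct and follows essentially the same approach as the paper: the paper's proof simply invokes the functoriality of the local pairings in flat cohomology (citing Remark~I.6.10 in~\cite{milne} and \S 2 of~\cite{cassels} for the analogous number-field argument), and what you have written is precisely an unpacking of that functoriality via the Weil-pairing adjunction $e_n^{E'}(\phi(P),Q')=e_n^{E}(P,\hat\phi(Q'))$ and the resulting compatibility of cup products. Your version is more explicit than the paper's one-line proof, but the underlying idea is identical.
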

\begin{proof} 
 The proof is analogous to the proof in the number field case (see Remark~I.6.10 in~\cite{milne} or \S 2 of~\cite{cassels}) and is deduced from the functoriality of the local pairings in flat cohomology.
\end{proof}
\begin{prop}\label{orthongal_prop}
 The orthogonal complement of $\Sha(E'/K)[V]$ in $\Sha(E'/K)[p^{\infty}]$ under the Cassels-Tate pairing 
$$\Sha(E'/K)[p^\infty]\times \Sha(E'/K)[p^\infty]\to {}^{\QQ}\!/\!{}_{\ZZ}$$ is the image of $F\colon \Sha(E/K)[p^{\infty}]\to \Sha(E'/K)[p^{\infty}]$.
\end{prop}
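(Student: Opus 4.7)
The plan is to pass to the finite quotients where the Cassels--Tate pairing is non-degenerate, run a clean adjoint--duality argument there, and then lift back.

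First I would set up the framework. Write $T = \Sha(E/K)[p^{\infty}]$ and $T' = \Sha(E'/K)[p^{\infty}]$, and let $\bar T$, $\bar T'$ be their quotients by the maximal divisible subgroups. By the discussion in section~\ref{cassels-tate}, Cassels--Tate induces non-degenerate pairings on $\bar T$ and $\bar T'$. The maps $F_{\sss\Sha}$ and $\hatphiSha$ (for $\phi=F$, so $\hat\phi=V$) descend to homomorphisms $\bar F\colon \bar T\to \bar T'$ and $\bar V\colon \bar T'\to \bar T$ that, by Lemma~\ref{adjoint_lem}, are adjoint with respect to these two pairings.

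Next I would prove the statement modulo divisible subgroups. The inclusion $\operatorname{im}(\bar F) \subseteq (\ker \bar V)^{\perp}$ is immediate from adjointness: for $\bar\xi \in \ker\bar V$ and any $\bar\eta \in \bar T$,
\begin{equation*}
\langle\!\langle \bar F(\bar\eta),\bar\xi\rangle\!\rangle_{E'} = \langle\!\langle \bar\eta, \bar V(\bar\xi)\rangle\!\rangle_E = 0.
\end{equation*}
Conversely, adjointness together with the non-degeneracy of the pairing on $\bar T$ gives $\ker(\bar V) = (\operatorname{im}\bar F)^{\perp}$: indeed, $\bar V(\bar\xi)=0$ iff $\langle\!\langle \bar\eta,\bar V(\bar\xi)\rangle\!\rangle_E = 0$ for all $\bar\eta$, iff $\langle\!\langle \bar F(\bar\eta),\bar\xi\rangle\!\rangle_{E'} = 0$ for all $\bar\eta$. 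Applying the double-orthogonal identity for non-degenerate pairings on finite abelian groups to $\bar T'$ then yields $(\ker\bar V)^{\perp} = \operatorname{im}(\bar F)$.

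Finally I would lift the equality back to $T'$. The orthogonal complement of $\Sha(E'/K)[V]$ in $T'$ automatically contains the divisible part $\divi(\Sha(E'/K))[p^{\infty}]$, since that is the kernel of Cassels--Tate on both sides. For the reverse, I would use $F\circ V=[p]_{E'}$: this implies $F_{\sss\Sha}\circ V_{\sss\Sha}$ is multiplication by $p$ on $\Sha(E'/K)[p^{\infty}]$, which is surjective on the divisible subgroup, so $\divi(\Sha(E'/K))[p^{\infty}] \subseteq \operatorname{im}(F_{\sss\Sha})$. Thus $\operatorname{im}(F_{\sss\Sha})$ and $\Sha(E'/K)[V]^{\perp}$ both contain $\divi(\Sha(E'/K))[p^{\infty}]$ and coincide modulo it by the previous step, so they are equal.

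The only real subtlety, and the main place to be careful, is the bookkeeping around the divisible subgroup: since Cassels--Tate is degenerate on $T'$, one must be sure that neither side is artificially enlarged or shrunk when dropping to the quotient and lifting back. The technical content is entirely packaged in Lemma~\ref{adjoint_lem} and the non-degeneracy of Cassels--Tate modulo divisible part recalled in section~\ref{cassels-tate}; everything else is formal.
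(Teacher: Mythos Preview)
Your proposal is correct and takes essentially the same route as the paper: reduce modulo the maximal divisible subgroups, use adjointness together with non-degeneracy of Cassels--Tate there, and lift back using that $F\circ V=[p]_{E'}$ makes $F_{\Sha}$ surject onto $D'$. One small point to tighten in the lifting step: ``coincide modulo it by the previous step'' tacitly identifies the image of $\Sha(E'/K)[V]$ in $\bar T'$ with $\ker\bar V$, and this requires the companion surjectivity $V_{\Sha}\colon D'\to D$ coming from $V\circ F=[p]_E$; the paper's element-by-element argument glosses over the same identification.
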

\begin{proof} Note that the proposition follows immediately from the previous lemma if the pairing is perfect. Else, by the previous Lemma~\ref{adjoint_lem}, it is immediate that $F\bigl(\Sha(E/K)[p^{\infty}]\bigr)$ is contained in the orthogonal of $\Sha(E'/K)[V]$. Let $\xi$ be an element in $\Sha(E'/K)[p^{\infty}]$ orthogonal to the kernel of $V$. 
  Let $D'$ denote the maximal divisible subgroup of $\Sha(E'/K)[p^\infty]$ and $D$ the maximal divisible subgroup of $\Sha(E/K)[p^\infty]$. Then there is a perfect paring on the quotients $\Sha(E'/K)[p^{\infty}]/D'$ and $\Sha(E/K)[p^{\infty}]/D$. Since $V$ and $F$ map divisible elements to divisible elements, they induce maps between these quotients.
  \begin{equation*}
    \xymatrix{%
    0 \ar[r] & D' \ar[r] \ar@<1ex>[d]^{V}& \Sha(E'/K)[p^{\infty}] \ar[r]\ar@<1ex>[d]^{V} & \Sha(E'/K)[p^{\infty}]/D' \ar[r]\ar@<1ex>[d]^{V} & 0 \\
    0 \ar[r] & D \ar[r] \ar@<1ex>[u]^{F}& \Sha(E/K)[p^{\infty}] \ar[r]\ar@<1ex>[u]^{F} & \Sha(E/K)[p^{\infty}]/D \ar[r]\ar@<1ex>[u]^{F} & 0 }
  \end{equation*}
  The element $\xi + D'$ in the quotient $\Sha(E'/K)[p^{\infty}]/D'$ is orthogonal to the kernel of $V$. Since the pairing is perfect there, we have an element $\eta$ in $\Sha(E/K)[p^{\infty}]$ such that $F$ maps $\eta+D$ to $\xi+D'$ in the quotients.  Hence $F(\eta) = \xi + \delta$ for some $\delta \in D'$.
  But since the map $F\circ V = [p]$ is surjective on $D'$, the map $F$ maps $D$ onto $D'$. Hence $\delta$ is in the image of $F$ and so is $\xi$. 
\end{proof}

The short exact sequence of finite flat group schemes
\begin{equation}\label{ses5_seq}
 0 \to E[F]\to E[p] \to E'[V] \to 0,
\end{equation}
induces, when passing to flat cohomology, the top row of the following exact commutative diagram.
\begin{equation*}
 \xymatrix@R=4mm{
  \dots\ar[r] & E'(K)[V]\ar[r] & \Hfl^1(K,E[F]) \ar[r]\ar[d] & \Hfl^1(K,E[p])\ar[r]^F \ar[d] & \Hfl^1(K,E'[V]) \ar[d] \\
              & 0 \ar[r] & \prod_v \Hfl^1(K_v,E)[F] \ar[r]   &\prod_v \Hfl^1(K_v,E)[p] \ar[r]^F &\prod_v \Hfl^1(K_v,E')[V] 
  }
\end{equation*}
From the above diagram, we obtain an exact sequence
\begin{equation}\label{ses6_seq}
  \xymatrix@R=3mm{
  0 \ar[r] & E(K)[F] \ar[r]    & E(K)[p] \ar[r]^{F}                & E'(K)[V] \ar[r]     &                            &\\
    \ar[r] & \Sel_F(E/K)\ar[r] & \Sel_p(E/K) \ar[r]^{F} & \Sel_V(E'/K) \ar[r] &  T \ar[r] & 0,
  }
\end{equation}
where $T$ is the cokernel of the map induced by $F$ on the Selmer groups. 
Parallel to this, we have a long exact (kernel-cokerel) sequence
\begin{equation}\label{kcok_seq}
  \xymatrix@R=3mm{
  0 \ar[r] & E(K)[F] \ar[r]    & E(K)[p] \ar[r]^{F}      & E'(K)[V] \ar[r]  &\\
    \ar[r] & E'(K)/F(E(K)) \ar[r]^V & E(K)/p E(K) \ar[r] & E(K)/V(E'(K)) \ar[r] &  0,
  }
\end{equation}
We may quotient the exact sequence~\eqref{ses6_seq} by the exact sequence~\eqref{kcok_seq},
using Kummer maps in the short exact sequence~\eqref{selsha_seq}. We get an alternative description of $T$ by an exact sequence.
\begin{equation}\label{ses7_seq}
  \xymatrix@1{
  0 \ar[r] & \Sha(E/K)[F]\ar[r] & \Sha(E/K)[p] \ar[r]^{F} &  \Sha(E'/K)[V] \ar[r] & T\ar[r] & 0.
  }
\end{equation}

\begin{cor}\label{T_square_cor}
  Let $E/K$ be an elliptic curve. The order of $T$ is a square. In other words,
  \begin{equation*}
    \#\Sha(E/K)[F]\cdot \#\Sha(E'/K)[V] \equiv \Sha(E/K)[p] \pmod{\square}.
  \end{equation*}
\end{cor}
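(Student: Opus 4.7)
The plan is to read off $\#T$ from the exact sequence~\eqref{ses7_seq}, then recognise this number as coming from an alternating pairing on a finite abelian $p$-group (on which such pairings always have square order since $p>2$).

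Set $A'=\Sha(E'/K)[p^{\infty}]$ with maximal divisible subgroup $D'$, and put $B=\Sha(E'/K)[V]$ and $C=F\bigl(\Sha(E/K)[p^{\infty}]\bigr)$. From~\eqref{ses7_seq} one has $T\cong B/F(\Sha(E/K)[p])$, and the relation $VF=[p]$ gives $F(\Sha(E/K)[p])=B\cap C$: the inclusion $\subseteq$ is immediate, and if $y=F(x)\in B\cap C$ then $px=VFx=Vy=0$, so $x\in\Sha(E/K)[p]$. Hence $T\cong B/(B\cap C)$ and
$$\#T=\frac{\#B}{\#(B\cap C)}=\frac{\#\Sha(E'/K)[V]\cdot\#\Sha(E/K)[F]}{\#\Sha(E/K)[p]},$$
which is exactly the ratio whose integrality mod squares we want to establish.

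By Proposition~\ref{orthongal_prop}, $C=B^{\perp}$ for the Cassels-Tate pairing on $A'$. This pairing is alternating with left and right kernels equal to $D'$, so it descends to a \emph{perfect} alternating pairing on $A'/D'$. Since $D'\subseteq C$ (noted in the proof of Proposition~\ref{orthongal_prop}), writing bars for images in $A'/D'$ we still have $\bar C=\bar B^{\perp}$. A short diagram chase based on the modular law, using $D'\subseteq C$ and $B\cap D'\subseteq B\cap C$, shows that the natural map $B\to \bar B/(\bar B\cap\bar C)$ is surjective with kernel $B\cap C$, giving a canonical isomorphism
$$T\;=\;B/(B\cap C)\;\cong\;\bar B/(\bar B\cap\bar C).$$

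The pairing on $A'/D'$ restricts to an alternating pairing on $\bar B$, whose left radical is $\bar B\cap\bar B^{\perp}=\bar B\cap\bar C$, so it descends to a non-degenerate alternating pairing on $\bar B/(\bar B\cap\bar C)$. On a finite abelian group a non-degenerate pairing is automatically perfect, and a finite abelian group with a perfect alternating pairing admits a symplectic decomposition (the characteristic of the residue fields being odd causes no trouble), so its order is a square. Transporting this along the isomorphism above yields $\#T\in(\QQ^{\times})^{2}$, which is the desired congruence. The only slightly delicate point is the modular-law bookkeeping identifying $T$ with $\bar B/(\bar B\cap\bar C)$; everything else is formal once Proposition~\ref{orthongal_prop} is in hand.
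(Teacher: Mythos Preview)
Your proof is correct and follows essentially the same approach as the paper: restrict the Cassels-Tate pairing to $\Sha(E'/K)[V]$, use Proposition~\ref{orthongal_prop} to identify the radical of the restricted pairing as $F\bigl(\Sha(E/K)[p]\bigr)$, and conclude that $T$ carries a non-degenerate alternating form, hence has square order. The only difference is that you make an unnecessary detour through the quotient $A'/D'$ to obtain a perfect pairing before restricting; the paper restricts the pairing directly to $B=\Sha(E'/K)[V]$ and observes that its radical there is $B\cap B^{\perp}=B\cap C=F\bigl(\Sha(E/K)[p]\bigr)$, which already gives the non-degenerate alternating form on $T$ without passing to $A'/D'$.
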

\begin{proof} 
By restriction the Cassels-Tate pairing induces a pairing on $\Sha(E'/K)[V]$ with values in $\cyclic{p}$. By the previous proposition the right and left kernels of this pairing are equal to the intersection of $F\bigl(\Sha(E/K)[p^{\infty}]\bigr)$ and $\Sha(E'/K)[V]$, which is equal to $F\bigl(\Sha(E/K)[p]\bigr)$. So the pairing induces a non-degenerate alternating pairing on $T$; hence the order of $T$ is a square. 
\end{proof}

\begin{lem}\label{rp_lem}
 We have
\begin{equation*}
p^{r_p} \equiv \frac{\#E(K)[F] \cdot \#\Sel_V(E'/K)}{\#E'(K)[V] \cdot \#\Sel_F(E/K)}  \pmod{\square}.
\end{equation*}
\end{lem}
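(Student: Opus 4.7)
The plan is to apply multiplicativity of orders to the eight-term exact sequence~\eqref{ses6_seq} and then compare the resulting expression for $\#\Sel_p(E/K)$ with an independent computation via the Kummer sequence~\eqref{ses1_seq}.

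First I take the alternating product of orders in the exact sequence~\eqref{ses6_seq}, whose terms are all finite. Replacing $\#T$ by $1$ modulo squares using Corollary~\ref{T_square_cor}, this yields
\begin{equation*}
 \#\Sel_p(E/K) \;\equiv\; \frac{\#E(K)[p]\cdot \#\Sel_F(E/K)\cdot \#\Sel_V(E'/K)}{\#E(K)[F]\cdot \#E'(K)[V]} \pmod{\square}.
\end{equation*}
A direct manipulation shows that the lemma is then equivalent to the congruence
\begin{equation*}
 \#\Sel_p(E/K) \;\equiv\; p^{r_p}\cdot \#E(K)[p] \pmod{\square}.
\end{equation*}

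Next I establish this remaining congruence by an independent count. The Mordell-Weil theorem gives $\#(E(K)/p)=p^r\cdot \#E(K)[p]$ with $r=\rank E(K)$, and therefore~\eqref{ses1_seq} implies
\begin{equation*}
 \#\Sel_p(E/K) = p^r\cdot \#E(K)[p]\cdot \#\Sha(E/K)[p].
\end{equation*}
Writing the cofinitely generated $\ZZ_p$-module $\Sha(E/K)[p^\infty]$ as $(\QQ_p/\ZZ_p)^s \oplus \Sha(E/K)_{\divi}$, corank-additivity in~\eqref{ses1_seq} gives $r_p=r+s$, and a straightforward count of $p$-torsion yields $\#\Sha(E/K)[p]=p^s\cdot \#\Sha(E/K)_{\divi}[p]$. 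Substituting, the desired congruence reduces to the claim that $\#\Sha(E/K)_{\divi}[p]$ is a square.

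Finally, by Section~\ref{cassels-tate} the Cassels-Tate pairing descends to a non-degenerate alternating pairing on the finite abelian $p$-group $\Sha(E/K)_{\divi}$. For the odd prime $p$, the symplectic basis theorem for alternating non-degenerate pairings on finite abelian $p$-groups provides a decomposition $\Sha(E/K)_{\divi}\cong B\oplus B$ for some finite abelian $p$-group $B$; consequently $\#\Sha(E/K)_{\divi}[p]=(\#B[p])^2$ is a square, and the lemma follows. The principal obstacle is precisely this last step: one needs the refinement, going beyond the assertion that $\#\Sha(E/K)_{\divi}$ is a square already recorded in Section~\ref{cassels-tate}, to the finer statement that even the $p$-torsion of $\Sha(E/K)_{\divi}$ has square order, which forces one to invoke the full symplectic structure theorem rather than merely the squareness of the total order.
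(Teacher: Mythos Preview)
Your argument is correct and follows essentially the same route as the paper's proof: both reduce the statement, via the alternating product in~\eqref{ses6_seq} and Corollary~\ref{T_square_cor}, to the congruence $\#\Sel_p(E/K)\equiv p^{r_p}\cdot\#E(K)[p]\pmod{\square}$, and then establish this by decomposing $\Sha(E/K)[p^\infty]$ into its divisible and finite parts. Two minor remarks: the identity $\#\Sel_p(E/K)=p^r\cdot\#E(K)[p]\cdot\#\Sha(E/K)[p]$ comes from~\eqref{selsha_seq} with $\phi=[p]$ rather than from~\eqref{ses1_seq}; and your explicit appeal to the symplectic structure theorem to conclude that $\#\Sha(E/K)_{\divi}[p]$ is a square is exactly what underlies the paper's terse ``and therefore'', so you have correctly identified that this step needs more than the mere squareness of $\#\Sha(E/K)_{\divi}$.
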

Of course, we have $\# E(K)[F] = 1$, but we include it here so as to make the formula resemble the symmetric formula in the classical case, like in Fisher's appendix to~\cite{dok_nonab}. 
\begin{proof}
 By the short exact sequence~\eqref{ses1_seq}, $r_p=r+ \corank_{\ZZ_p}\Sha(E/K)[p^\infty]$, where $r=\rank_{\ZZ}(E(K))$ and $r_p$ is the $\ZZ_p$-rank of the dual of $\Sel_{p^\infty}(E/K)$.
 Now, since 
 $\Sha(E/K)[p^\infty]$ is cofinitely generated as $\ZZ_p$-module, we have 
\begin{equation*}
 \dim_{\FF_p} \Sha(E/K)[p] = \corank_{\ZZ_p} \bigl(\divi \Sha(E/K) [p^\infty]\bigr) + \dim_{\FF_p} \bigl( \Sha(E/K)_{\divi} [p] \bigr)
\end{equation*}
 As noticed at the beginning of section~\ref{cassels-tate}, $\#\Sha(E/K)_{\divi}$ (and therefore $\#\Sha(E/K)_{\divi}[p]$) is a square. We deduce that
 \begin{equation*}
  r_p \equiv r+\dim_{\FF_p} \Sha(E/K)[p]  \pmod{2}.
 \end{equation*}
 On the other hand, the short exact sequence~\eqref{selsha_seq} applied to $[p]$ implies that
 \begin{equation*}
  \dim_{\FF_p} \Sel_p(E/K) = r + \dim_{\FF_p} E(K)[p] + \dim_{\FF_p}\Sha(E/K)[p],
 \end{equation*}
since $E(K)/pE(K) \simeq E(K)[p]\oplus (\cyclic{p})^r$. So we get the formula
 \begin{equation*}
 r_p\equiv \dim_{\FF_p}E(K)[p] +\dim_{\FF_p}\Sel_p(E/K) \pmod{2}.
 \end{equation*}
 The assertion results then from the exact sequence~\eqref{ses6_seq} and Corollary~\ref{T_square_cor}.
\end{proof}

%
%

\section{Global duality}

\begin{prop}\label{global_duality_prop} 
Let $E/K$ be a non-isotrivial elliptic curve and let $U$ be an open subset of $C$ over which $E$ has good reduction.
Then we have
\begin{equation*}
\frac{\#E(K)[F] \cdot \#\Sel_V(E'/K)}{\#E'(K)[V] \cdot \#\Sel_F(E/K)} = \frac{1}{\chi(U,\mathcal{E}[F])}\cdot \prod_{v\not \in U} z(V_{E'(K_v)}).
\end{equation*}
\end{prop}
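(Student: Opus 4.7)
The plan is to compute both sides using flat cohomology on $U$ and invoke Milne's global duality (Theorem~III.8.2 of~\cite{milne}) for the Cartier-dual pair $\mathcal{E}[F]$, $\mathcal{E}'[V]$. First I would expand
\begin{equation*}
\chi(U, \mathcal{E}[F]) \;=\; \frac{\#\Hfl^0(U, \mathcal{E}[F]) \cdot \#\Hfl^2(U, \mathcal{E}[F])}{\#\Hfl^1(U, \mathcal{E}[F])},
\end{equation*}
identifying $\Hfl^0(U, \mathcal{E}[F]) = E(K)[F]$ via the N\'eron mapping property, and using global duality to convert each $\#\Hfl^i(U, \mathcal{E}[F])$ into $\#\Hflc^i(U, \mathcal{E}'[V])$. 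This reduces the task to expressing $\#\Hfl^1(U, \mathcal{E}[F])$ through $\#\Sel_F(E/K)$ and $\#\Hflc^1(U, \mathcal{E}'[V])$ through $\#\Sel_V(E'/K)$, with explicit local corrections at $v \not\in U$.

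For $\Hfl^1(U, \mathcal{E}[F])$ the defining exact sequence from Section~\ref{selmer_sec} gives
\begin{equation*}
0 \to \Sel_F(E/K) \to \Hfl^1(U, \mathcal{E}[F]) \to \prod_{v \not\in U} \Hfl^1(K_v, E)[F],
\end{equation*}
so that $\#\Hfl^1(U, \mathcal{E}[F]) = \#\Sel_F(E/K) \cdot \#I_F$, where $I_F$ denotes the image of the right-hand map. For $\Hflc^1(U, \mathcal{E}'[V])$, I would exploit the localization long exact sequence
\begin{equation*}
E'(K)[V] \to \prod_{v \not\in U} E'(K_v)[V] \to \Hflc^1(U, \mathcal{E}'[V]) \to \Hfl^1(U, \mathcal{E}'[V]) \to \prod_{v \not\in U} \Hfl^1(K_v, E'[V]),
\end{equation*}
coupled with the Kummer sequence $0 \to E(K_v)/VE'(K_v) \to \Hfl^1(K_v, E'[V]) \to \Hfl^1(K_v, E')[V] \to 0$. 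Together these express $\#\Hflc^1(U, \mathcal{E}'[V])$ in terms of $\#\Sel_V(E'/K)$ and of the local orders $\#E'(K_v)[V]$ and $\#(E(K_v)/VE'(K_v))$ at $v \not\in U$, divided by $\#E'(K)[V]$.

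Substituting both identifications back into the Euler characteristic relation and invoking local flat Tate duality at each bad place (so that $\Hfl^1(K_v, E)[F]$ pairs with $E(K_v)/VE'(K_v)$, and $I_F$ gets matched against the cokernel of the right-hand map on the $V$-side), the local contributions assemble to $\prod_{v \not\in U}\#(E(K_v)/VE'(K_v))/\#E'(K_v)[V] = \prod_{v \not\in U} z(V_{E'(K_v)})$, while the global pieces collapse to $\#E(K)[F] \cdot \#\Sel_V(E'/K)/\bigl(\#E'(K)[V] \cdot \#\Sel_F(E/K)\bigr)$. The main obstacle is this last matching step: it amounts to fitting the two localization sequences into a single commutative diagram (whose columns are the local Kummer sequences) which is compatible with local and global duality -- in effect, a Poitou--Tate-style 9-term exact sequence in flat cohomology on $U$. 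Establishing this compatibility in positive characteristic, where flat cohomology at the bad places is not accessible through Galois cohomology alone, will be the most delicate part of the argument.
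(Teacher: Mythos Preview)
Your strategy is essentially the paper's, and the ingredients you list (global duality for $\mathcal{E}[F]$ and $\mathcal{E}'[V]$, the localisation sequence relating $\Hflc^i$ and $\Hfl^i$, local Tate duality, and the local Kummer sequences) are exactly the ones used. One slip: global duality gives $\#\Hfl^i(U,\mathcal{E}[F]) = \#\Hflc^{3-i}(U,\mathcal{E}'[V])$, not $\#\Hflc^{i}$; your subsequent use of $\Hflc^1(U,\mathcal{E}'[V])$ as the dual partner of $\Hfl^2(U,\mathcal{E}[F])$ is consistent with the correct indexing, so this looks like a typo rather than a misconception.

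Where your plan and the paper's execution differ is precisely at the ``matching step'' you flag. Computing $\#\Hfl^1(U,\mathcal{E}[F])$ and $\#\Hflc^1(U,\mathcal{E}'[V])$ separately forces you to control an image $I_F\subset\bigoplus_{v\notin U}\Hfl^1(K_v,E)[F]$ on one side and the intersection of $\im\bigl(\Hfl^1(U,\mathcal{E}'[V])\to\bigoplus\Hfl^1(K_v,E'[V])\bigr)$ with the Kummer image on the other, and then to argue that these two unknown quantities cancel under local duality. That cancellation is not automatic from counting alone; it is the content of the compatibility you are worried about. The paper sidesteps this entirely by never separating the pieces: it writes the single localisation sequence
\[
\cdots\to \Hfl^i(U,\mathcal{E}[F])\to\bigoplus_{v\notin U}\Hfl^i(K_v,E[F])\to \Hfl^{2-i}(U,\mathcal{E}'[V])^{\vee}\to\cdots,
\]
then quotients the local terms $\Hfl^1(K_v,E[F])$ by the Kummer images $E'(K_v)/F(E(K_v))$ and simultaneously quotients $\Hfl^1(U,\mathcal{E}'[V])^{\vee}$ by the image of $\bigoplus(\Hfl^1(K_v,E')[V])^{\vee}$, using one commutative square to check compatibility. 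The result is a single exact sequence with $\Sel_F(E/K)$, $\Hfl^1(U,\mathcal{E}[F])$, $\bigoplus(E(K_v)/VE'(K_v))^{\vee}$, $\Sel_V(E'/K)^{\vee}$, $\Hfl^2(U,\mathcal{E}[F])$, $\bigoplus(E'(K_v)[V])^{\vee}$, $(E'(K)[V])^{\vee}$ as its terms, and the alternating product of orders gives the formula immediately. So your anticipated Poitou--Tate-style sequence is exactly what is built, and building it \emph{is} the proof; the piecemeal bookkeeping you propose would amount to re-deriving this sequence term by term.
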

We insist once more that the roles of $F$ and $V$ here are not interchangeable, e.g. the terms $z(F_{E(K_v)})$ in the product would not be finite.
\begin{proof}
 The long exact sequence for flat cohomology deduced from the definition of $\Hflc^i$ in Proposition~III.0.4.(a) in~\cite{milne} reads
 \begin{equation*}
 \xymatrix@1{\cdots \ar[r]& \Hflc^i(U,\cdot)\ar[r]&\Hfl^i(U,\cdot)\ar[r]& \bigoplus_{v\not\in U} \Hfl^i(K_v,\cdot)\ar[r]& \Hflc^{i+1}(U,\cdot)\ar[r]&\cdots}
 \end{equation*}
 \color{black}The global duality in Theorem~III.8.2 of~\cite{milne} implies that the group $\Hflc^i(U, \mathcal{E}[F])$ is dual to $\Hfl^{3-i}(U,\mathcal{E}'[V])$ since $\mathcal{E}[F]$ is finite and flat over $U$. We find the following long exact sequence
 \begin{equation*}
   \xymatrix@R-1ex{%
          & \Hfl^1\bigl(U,\mathcal{E}[F]\bigr) \ar[r]& \bigoplus_{v \not \in U} \Hfl^1\bigl(K_v, E[F]\bigr) \ar[r]& \Hfl^1\bigl(U,\mathcal{E}'[V]\bigr)^{\vee} \ar[r]& \\
  \ar[r]  & \Hfl^2\bigl(U,\mathcal{E}[F]\bigr) \ar[r]& \bigoplus_{v \not \in U} \Hfl^2\bigl(K_v, E[F]\bigr) \ar[r]& \Hfl^0\bigl(U,\mathcal{E}'[V]\bigr)^{\vee} \ar[r]& 0
   }
 \end{equation*}
 Local duality as in Theorem~III.6.10 in~\cite{milne} shows that $\Hfl^2(K_v,E[F])$ is dual to $E'(K_v)[V]$. 
 Our aim is to replace the local term $\Hfl^1(K_v,E[F])$ by the cokernel of the map from $E'(K_v)/F(E(K_v))$. By local duality (Theorem~III.7.8 in~\cite{milne} and the functoriality of biextensions), this term is dual to $\Hfl^1(K_v,E')[V]$. So we will quotient the term $\Hfl^1\bigl(U,\mathcal{E}'[V]\bigr)^{\vee}$ by the image of the map on the right hand side in the following commutative diagram.
\begin{equation}\label{useless_eq}
\xymatrix@R=9mm{
  & \bigoplus_{v \not\in U} {}^{E'(K_v)}\!/\!{}_{F(E(K_v))} \ar[r]^{\cong} \ar[d] & \bigoplus_{v \not \in U} \bigl(\Hfl^1(K_v,E')[V]\bigr)^{\vee} \ar[d] & \\
  \Hfl^1(U,\mathcal{E}[F]) \ar[r] & \bigoplus_{v \not \in U} \Hfl^1\bigl(K_v, E[F]\bigr) \ar[r]& \Hfl^1\bigl(U,\mathcal{E}'[V]\bigr)^{\vee} \ar[r] & \dots
 }
\end{equation}
Because of the exact Kummer sequence  
\begin{equation*}
   \xymatrix@1{ 0\ar[r] & {}^{E'(K_v)}\!/\!{}_{F(E(K_v))} \ar[r] & \Hfl^1(K_v,E[F]) \ar[r] & \Hfl^1(K_v,E)[F] \ar[r] & 0. }
\end{equation*}
the cokernel of the map on the left in~\eqref{useless_eq} is $\bigoplus_{v\not\in U}\Hfl^1(K_v,E)[F]$, which, again by local duality, is dual to  $\bigoplus_{v \not \in U} E(K_v)/V(E'(K_v))$. By definition the cokernel of the map on the right in~\eqref{useless_eq} is the dual of the Selmer group $\Sel_V(E'/K)$.

%
 Putting all these results together, we obtain the long exact sequence
 \begin{equation*}
   \xymatrix@R-2ex{%
   0\ar[r] & \Sel_F(E/K) \ar[r] & \Hfl^1\bigl(U,\mathcal{E}[F]\bigr) \ar[r]& \bigoplus_{v \not \in U} \Bigl({}^{E(K_v)}\!/\!{}_{V(E'(K_v))}\Bigr)^{\vee} \ar[r] & \\
    \ar[r] & \Sel_V(E'/K)^{\vee} \ar[r] &  \Hfl^2\bigl(U,\mathcal{E}[F]\bigr) \ar[r]& \bigoplus_{v \not \in U} \Bigl(E'(K_v)[V]\Bigr)^{\vee} \ar[r] & \\
    \ar[r] & \Bigl(E(K)[V]\Bigr)^{\vee} \ar[r]& 0
   }
 \end{equation*}
 Since all other terms in the sequence are finite, the groups $\Hfl^i(U,\mathcal{E}[F])$ are finite, too. The alternating product of its orders gives the result.
\end{proof} 

If $E/K_v$ is a non-isotrivial, semistable elliptic curve then one can show that the group scheme $\mathcal{E}[F]$ is finite and flat. So  the result of Proposition~\ref{global_duality_prop} can be extended to any open subset $U$ such that $E$ has semistable reduction over all places in $U$. In particular $U$ can be taken to be equal to $C$, if $E/K$ is semistable.

%
%

\section{The proof of the $p$-parity}\label{pparity_sec}

We now pass to the proof of Theorem~\ref{pparity_thm}. We return now to our running assumptions. $K$ has characteristic $p>3$ and $E/K$ is not isotrivial. We present first the main results coming from global duality and the local computations and then we just have to put them together. But both these statements are interesting in their own right.

\begin{thm}\label{parity_corank_thm}
 Let $E/K$ be a non-isotrivial elliptic curve. We have
 $$
  p^{r_p} \equiv \prod_v z\Bigl( V_{E'(K_v)}\colon E'(K_v)\to E(K_v)\Bigr) \pmod{\square}
 $$
 where the product runs over all places $v$ in $K$.
\end{thm}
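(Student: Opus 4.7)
The plan is to simply assemble the three preceding results---Lemma~\ref{rp_lem}, Proposition~\ref{global_duality_prop}, and Proposition~\ref{global_chi_prop}---and dispose of the factor at the ``good ordinary'' places using the local calculation already carried out in Proposition~\ref{good_prop}.

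First I would invoke Proposition~\ref{global_chi_prop} to choose a dense open subset $U\subseteq C$ such that $E$ has good ordinary reduction at every place of $U$ and $\chi(U,\mathcal{E}[F])$ is a square in $\QQ^\times$. Lemma~\ref{rp_lem} then gives
\begin{equation*}
p^{r_p}\ \equiv\ \frac{\#E(K)[F]\cdot \#\Sel_V(E'/K)}{\#E'(K)[V]\cdot \#\Sel_F(E/K)}\pmod{\square},
\end{equation*}
and the right-hand side is, by Proposition~\ref{global_duality_prop}, equal to $\chi(U,\mathcal{E}[F])^{-1}\cdot\prod_{v\notin U} z(V_{E'(K_v)})$. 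Since $\chi(U,\mathcal{E}[F])$ was arranged to be a square, we obtain
\begin{equation*}
p^{r_p}\ \equiv\ \prod_{v\notin U} z\bigl(V_{E'(K_v)}\bigr)\pmod{\square}.
\end{equation*}

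It remains to check that enlarging the product from $v\notin U$ to all places of $K$ costs only a square. For $v\in U$ the curve has good ordinary reduction, so the Hasse invariant $\alpha$ is a unit at $v$, giving $v(\alpha)=0$; Proposition~\ref{good_prop} then yields $z(V_{E'(K_v)})=q_v^{v(\alpha)}=1$. Hence $\prod_{v\in U}z(V_{E'(K_v)})=1$ exactly, and multiplying in these trivial factors produces the full product over all places of $K$, as claimed.

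The substantive obstacle has already been overcome upstream: it is precisely Proposition~\ref{global_chi_prop} that lets one ignore the Euler-characteristic factor modulo squares, and this in turn relied on global flat duality together with the Oort--Tate description of $\mathcal{E}[F]$ and the product formula for $|\alpha|_v$. Once that is granted, the present theorem is essentially a bookkeeping exercise, and no further input beyond the local ordinary calculation is required.
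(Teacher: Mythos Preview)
Your proposal is correct and follows essentially the same route as the paper's proof: invoke Proposition~\ref{global_chi_prop} to fix $U$, combine Lemma~\ref{rp_lem} with Proposition~\ref{global_duality_prop}, drop the square $\chi(U,\mathcal{E}[F])$, and then use Proposition~\ref{good_prop} to handle the places of $U$. The only cosmetic difference is that you observe $z(V_{E'(K_v)})=q_v^{v(\alpha)}=1$ exactly at good ordinary places (quoting the formula from the \emph{proof} of Proposition~\ref{good_prop}), whereas the paper is content to cite that this quantity is a square there.
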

\begin{proof}
  Proposition~\ref{global_chi_prop} provides us with an open subset $U$ in $C$ such that $E$ has good ordinary reduction at all places in $U$.
  It follows from Lemma~\ref{rp_lem}, Proposition~\ref{global_duality_prop}, and Proposition~\ref{global_chi_prop} that
  \begin{align*}
    p^{r_p} &\equiv  \frac{\#E(K)[F] \cdot \#\Sel_V(E'/K)}{\#E'(K)[V] \cdot \#\Sel_F(E/K)}  \pmod{\square}\\
            &= \frac{1}{\chi(U,\mathcal{E}[F])}\cdot \prod_{v\not \in U} z\Bigl( V_{E'(K_v)}\colon E'(K_v)\to E(K_v)\Bigr) \\
            & \equiv \prod_{v\not \in U} z\Bigl( V_{E'(K_v)}\colon E'(K_v)\to E(K_v)\Bigr) \pmod{\square}
  \end{align*}
  Finally from Proposition~\ref{good_prop}, we know that $z(V_{E'(K_v)})$ is a square for all places $v \in U$ as $E$ has good ordinary reduction there.  
\end{proof}
Next, we collect from section~\ref{local_sec} the following result.
\begin{prop}\label{rootno_prop}
 Let $E/K$ be a semistable elliptic curve. Then the root number is $w(E/K) = (-1)^s$ where $s$ is the number of split
 multiplicative primes for $E/K$. Furthermore $s$ has the same parity as the $p$-adic valuation of 
 \begin{equation*}
  \prod_{v} \frac{ c_v(E/K) }{c_v(E'/K)}
 \end{equation*}
 where $c_v$ are Tamagawa numbers.
\end{prop}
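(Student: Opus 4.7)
The plan is to evaluate both sides place by place, using the semistable assumption to restrict to only three types of reduction at each place $v$: good, split multiplicative, or non-split multiplicative.

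For the first assertion I would simply assemble the local computations of section~\ref{local_sec}. By Proposition~\ref{good_prop}, $w(E/K_v) = +1$ at good reduction places; by Proposition~\ref{split_prop}, $w(E/K_v) = -1$ at split multiplicative places; by Proposition~\ref{nonsplit_prop}, $w(E/K_v) = +1$ at non-split multiplicative places. Since $w(E/K) = \prod_v w(E/K_v)$ and the curve is semistable, only the split multiplicative contributions are nontrivial, so $w(E/K) = (-1)^s$.

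For the second assertion I would compute $v_p\bigl(c_v(E/K)/c_v(E'/K)\bigr)$ for each reduction type, using that $F\colon E \to E'$ is the Frobenius and so, over any extension over which $E$ acquires multiplicative reduction, the Tate parameter of $E'$ is $\qE^p$. At places of good reduction, both Tamagawa numbers are $1$ and contribute nothing. At non-split multiplicative places one has $c_v(E/K), c_v(E'/K) \in \{1,2\}$, so the $p$-adic valuation of the ratio is $0$ since $p>2$; an alternative (and perhaps cleaner) way to see this is to note that $E$ and $E'$ acquire split multiplicative reduction over the same quadratic unramified extension $K'_v/K_v$, and the parameters $\qE$ and $\qE^p$ have the same parity of valuation because $p$ is odd. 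At split multiplicative places, $c_v(E/K) = v(\qE)$ while $c_v(E'/K) = v(\qE^p) = p\cdot v(\qE)$, so
\begin{equation*}
 v_p\!\left( \frac{c_v(E/K)}{c_v(E'/K)} \right) = -1.
\end{equation*}

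Putting the three cases together, every split multiplicative place contributes $-1$ to $v_p\bigl(\prod_v c_v(E/K)/c_v(E'/K)\bigr)$ and every other place contributes $0$, so the total valuation is $-s$, which has the same parity as $s$. Combined with the first part, this gives the desired formula. The main thing to get right is the claim that $E'$ has Tate parameter $\qE^p$ in the semistable bad reduction cases (the split case is already verified in Proposition~\ref{split_prop}, and the non-split case follows from it by base change to the quadratic unramified twist), and the mild point that $\gcd(p,2)=1$ so non-split places really do drop out of the $p$-adic valuation; neither of these seems to present a genuine obstacle.
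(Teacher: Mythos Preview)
Your proof is correct and proceeds place by place exactly as the paper does; the Tamagawa-number computation is identical to the paper's. The only stylistic difference is in the first assertion: the paper routes through Theorem~\ref{local_thm} to write $w(E/K_v)=\sigma(E/K_v)\cdot\bigl(\tfrac{-1}{L_w/K_v}\bigr)$ and then kills the norm symbols globally via the product formula before reading off $\sigma$ from Propositions~\ref{good_prop}, \ref{split_prop}, \ref{nonsplit_prop}, whereas you simply quote the local root numbers stated in those same propositions directly --- your route is slightly more direct, the paper's emphasises the link to Theorem~\ref{local_thm}.
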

\begin{proof}
 From Theorem~\ref{local_thm} we deduce that 
 \begin{equation*}
  w(E/K)  = \prod_v w(E/K_v) = \prod_v \sigma(E/K_v) \cdot \Bigl(\frac{-1}{L_w/K_v}\Bigr) = \prod_v \sigma(E/K_v)
 \end{equation*}
 by the product formula for the norm symbols $\prod_v \bigl(\frac{-1}{L_w/K_v}\bigr)$ with $L$ being the extension of $K$ over which $E[F]=\mu_p$ and $w$ is any place above $v$. Using the Propositions~\ref{good_prop}, \ref{split_prop}, and~\ref{nonsplit_prop}, we see that $\sigma(E/K_v)$ is  $-1$ if and only if $E$ has split multiplicative reduction at $v$.

 If the reduction at $v$ is split multiplicative, then we have $c_v(E'/K) = p\cdot c_v(E/K)$ since the parameters in the Tate parametrisation satisfy $q_{E'} = {q_E}^p$. If the reduction is non-split multiplicative
 then the Tamagawa numbers can only be 1 or 2.
\end{proof}
Note that we could have used the known modularity and the Atkin-Lehner operators to prove this statement without the computations in section~\ref{local_sec}, at least if $E$ has at least one place of split multiplicative reduction.
\begin{proof}[Proof of Theorem~\ref{pparity_thm}]
 First, we use Corollary~\ref{toss_cor} which allows us to assume that $E/K$ is semistable.
 Then by the previous Proposition~\ref{rootno_prop} we have $w(E/K) = \prod_v \sigma(E/K_v)$ and Theorem~\ref{parity_corank_thm} states
 that $(-1)^{r_p}= \prod_v \sigma(E/K_v)$.
\end{proof}
%

%
%

\section{Local Root Number Formula}\label{rootno_sec}

We now prove Theorem~\ref{local_thm} without any hypothesis on the reduction. We use, without repeating the definitions, the notations from section~\ref{local_sec}.
\begin{thm}\label{local2_thm}
 Let $K$ be a local field of characteristic $p>3$.
 For any non-isotrivial elliptic curve $E/K$, we have $w(E/K) = \bigl(\frac{-1}{L/K}\bigr) \cdot \sigma(E/K)$.
\end{thm}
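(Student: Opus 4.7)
The statement differs from Theorem~\ref{local_thm} only in removing the hypothesis on the reduction, so the one remaining case is additive potentially good. Write $E_0/K_0$ for the given local curve. The plan is to deduce the identity for $E_0/K_0$ from the now-established global $p$-parity Theorem~\ref{pparity_thm} by a globalization argument: realize $E_0$ as the local factor at one place $v_0$ of a global non-isotrivial elliptic curve $E/K$ all of whose other bad places fall under Theorem~\ref{local_thm}, and then extract the desired local identity by cancellation.

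For the globalization, write $K_0 = \FF_{q_0}((t))$, put $K = \FF_{q_0}(t)$ with the place $v_0 = (t)$, so that $K_{v_0} = K_0$. Given a Weierstrass equation $y^2 = x^3 + A_0 x + B_0$ for $E_0$ over $K_0$, approximate $A_0, B_0$ to high $v_0$-adic precision by polynomials $A, B \in \FF_{q_0}[t]$. For sharp enough approximation, the resulting curve $E/K$ is non-isotrivial and has the same Kodaira type as $E_0$ at $v_0$; and by a generic choice of $A, B$ (including a choice of degrees controlling the place $\infty$ of $K$), no place of $K$ other than $v_0$ carries additive potentially good reduction of $E$.

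For this global $E/K$, Theorem~\ref{pparity_thm} gives $w(E/K) = (-1)^{r_p}$, and Theorem~\ref{parity_corank_thm} together with the fact that the prime $p > 3$ is not a square in $\QQ^\times$ yields $(-1)^{r_p} = \prod_v \sigma(E/K_v)$. Separately, writing $\rho \colon G_K \to (\ZZ/p)^\times$ for the global character cutting out $L$, Artin reciprocity applied to the principal idele $-1 \in K^\times$ (whose image is automatically $2$-torsion and thus lies in $\{\pm 1\} \subset (\ZZ/p)^\times$) gives $\prod_v \bigl(\tfrac{-1}{L_w/K_v}\bigr) = 1$. Combining the two identities,
\begin{equation*}
  \prod_v w(E/K_v) \;=\; \prod_v \sigma(E/K_v) \cdot \bigl(\tfrac{-1}{L_w/K_v}\bigr).
\end{equation*}
By Theorem~\ref{local_thm} the local factors on the two sides of this equation agree at every $v \neq v_0$, so they must agree at $v_0$ too, which is the desired identity for $E_0/K_0$.

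The delicate point is the globalization: ensuring that $E/K$ can be chosen with no additional place of additive potentially good reduction. This amounts to a genericity statement for polynomial Weierstrass equations over a rational function field, together with some bookkeeping at the place at infinity and at places where the discriminant might incidentally acquire a higher-order zero.
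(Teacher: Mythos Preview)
Your approach matches the paper's: globalize so that $v_0$ is the only place of additive potentially good reduction, apply the proven $p$-parity together with Theorem~\ref{parity_corank_thm} and the product formula for norm symbols, and cancel the factors at $v\neq v_0$ via Theorem~\ref{local_thm}. On the construction you flag as delicate, the paper does not rely on a genericity statement for polynomials over $\FF_{q_0}[t]$; instead it fixes an auxiliary place $v_1$ and builds $a,b$ inside Riemann--Roch spaces of controlled pole order at $v_1$, forcing the $j$-invariant to have a pole there (so the reduction at $v_1$ is potentially multiplicative) while arranging that the zeros of $a$ and $b$ are disjoint away from $v_0,v_1$, so that the discriminant is a unit at every remaining place where $a$ vanishes.

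There is however one genuine gap. Your cancellation yields the identity $w(E/K_{v_0}) = \bigl(\tfrac{-1}{L_{w_0}/K_{v_0}}\bigr)\cdot\sigma(E/K_{v_0})$ for the \emph{approximating} curve $E$ at $v_0$, not for $E_0$: since $E/K_{v_0}$ is only $v_0$-adically close to $E_0$ rather than isomorphic to it, sharing the Kodaira type does not suffice to conclude. You must argue that each of $w$, $\sigma$, and $\bigl(\tfrac{-1}{L/K}\bigr)$ is locally constant as the Weierstrass coefficients vary $v_0$-adically. The paper supplies this explicitly: the local root number is locally constant by a result of Helfgott; the field $L$ and the order of $E'(K_0)[V]$ are determined by separable polynomials whose coefficients vary continuously; and the order of the cokernel of $V$ is controlled by the component group together with the valuation of the Hasse invariant (as in the proof of Proposition~\ref{good_prop}), both of which are stable under close approximation. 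Since all three quantities take values in $\{\pm1\}$, they stabilize once the approximation is sharp enough, and only then does the identity transfer back to $E_0$.
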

As mentioned in section~\ref{local_sec} this answers positively a conjecture in~\cite{dok_09} for the isogeny $V$. This theorem could certainly be shown by local computations only, but they would tend to be very tedious for additive potentially supersingular reduction. We can avoid this here by using a global argument. This is a similar idea as in the proof of Theorem~5.7 in~\cite{dok_09}.
\begin{proof}
 By Theorem~\ref{local_thm}, we may assume that $E/K$ has additive, potentially good reduction. Let $n\geqslant 12$.
  We can find a minimal integral equation $y^2 = x^3 +A\,x + B$ for $E/K$. Choose a global field $\mathcal{K}$ of characteristic $p$ with a place $v_0$ such that $\mathcal{K}_{v_0} = K$. 
 Choose another place $v_1 \neq v_0$ in $\mathcal{K}$ and choose a large even integer $N$ such that $(N-1)\cdot \deg(v_1) > 2g - 1 + n\deg(v_0)$, where $g$ is the genus of $\mathcal{K}$. For a divisor $D$ on the projective smooth curve $\mathcal{C}$ corresponding to $\mathcal{K}$, we write $L(D)$ for the Riemann-Roch space
 $H^0(\mathcal{C},\mathcal{O}_C(D))$.
The inequality on $N$ guarantees that the dimensions of the Riemann-Roch spaces in the exact sequence
 \begin{equation*}
   \xymatrix@1{ 0 \ar[r]& L\bigl(N (v_1) - n (v_0) \bigr) \ar[r]&  L\bigl(N (v_1) \bigr) \ar[r] & \mathcal{O}_{v_0}/\mathfrak{m}_{v_0}^n \ar[r]& 0 }
 \end{equation*}
are positive -- e.g. equal to $N\deg(v_1) -n\deg(v_0) + 1 -g> g+\deg(v_1)$ for the smaller space. Choose an element $a$ in $L(N (v_1))$ which maps to $A+\mathfrak{m}_{v_0}^n$ on the right. We can even impose that it does not lie in $L((N-1)(v_1))$, since this is a subspace of codimension $\deg(v_1)>0$ in $L(N(v_1))$. Then $a$ has a single pole of order $N$ at $v_1$ and it satisfies $v_0(A-a) \geq n$.
Next, we use that $N$ is even and we choose an element $b$ in $\mathcal{K}$ such that $v_1(b) = -\tfrac{3}{2}N$, and $v_0(B-b)\geq n$. We can also impose that 
$v_1(4 a^3 + 27 b^2) > -3N$. Furthermore we impose that the zeroes of $b$ are distinct from the zeroes of $a$; this excludes at worst $N\deg(v_1)$ subspaces of codimension $1$ in $L\bigl(\tfrac{3N}{2}(v_1)\bigr)$.

 Let $\mathcal{E}/\mathcal{K}$ be the elliptic curve given by $y^2=x^3 +a\,x +b$.
 By the congruences on $a$ and $b$ at $v_0$ and the continuity of Tate's algorithm, the reduction of $\mathcal{E}$ at $v_0$ is additive, potentially good. 
 At the place $v_1$ the valuation of the $j$-invariant $j(\mathcal{E}) = 2^8\cdot 3^3\cdot a^3 /(4 \,a^3+27\, b^2)$ will be negative by our choices. Hence the reduction is either multiplicative or potentially multiplicative. For any other place $v$ with $v(a)>0$, we have $v(4 \,a^3+27\, b^2) = 0$ and hence the curve has good reduction at $v$, and for any other place $v$ with $v(a) = 0$, either the reduction is good or $v(j(\mathcal{E}))< 0$.

 Therefore we have constructed an elliptic curve $\mathcal{E}/\mathcal{K}$ with a single place $v_0$ of additive, potentially good reduction.
 So for all other places Theorem~\ref{local_thm} applies. 
 Let $\mathcal{L}$ be the extension of $\mathcal{K}$ such that $E[F]\cong\mu_p$ over $\mathcal{L}$. 
 Now we use the results of Theorem~\ref{parity_corank_thm} and the proven $p$-parity in Theorem~\ref{pparity_thm} to compute
 \begin{align*}
  w(\mathcal{E}/K) &= \frac{w(\mathcal{E}/\mathcal{K})}{\prod_{v\neq v_0} w(\mathcal{E}/\mathcal{K}_{v})}
                   = \frac{(-1)^{r_p}}{
                       \prod_{v\neq v_0}\bigl(\frac{-1}{\mathcal{L}_w/\mathcal{K}_v}\bigr)
                                        \sigma(\mathcal{E}/\mathcal{K}_{v})} \\
                 & =  \frac{\bigl(\frac{-1}{\mathcal{L}_{w_0}/K}\bigr)}{
                            \prod_{\textnormal{all }v} \bigl(\frac{-1}{\mathcal{L}_w/\mathcal{K}_v}\bigr)}
                   \cdot\frac{\prod_{\textnormal{all }v} \sigma(\mathcal{E}/\mathcal{K}_{v})}{
                              \prod_{v\neq v_0}\sigma(\mathcal{E}/\mathcal{K}_{v})}
   =  \Bigl(\frac{-1}{\mathcal{L}_{w_0}/K}\Bigr)\cdot\sigma(\mathcal{E}/K).
 \end{align*}
 Once again we used the product formula for the norm symbol. Now we argue that the three terms are all continuous in the topology of $K$ as $a$ and $b$ varies: For the local root number this is exactly the statement of Proposition~4.2 in~\cite{helfgott}. The field $\mathcal{L}_{w_0}$ and the order of the kernel $E'(K)[V]$ of Verschiebung are locally constant because they are defined by continuously varying separable polynomials. Finally, the order of the cokernel of $V\colon E'(K)\to E(K)$ is locally constant, because the group of connected components and the reduction and the induced map $V$ on them will not change and on the formal group the cokernel is determined by the valuation of the Hasse invariant (which again is a polynomial in $a$ and $b$) by the argument in the proof of Proposition~\ref{good_prop}.
  Since all three terms take value $\pm 1$, they will eventually, for big enough $n$, be equal to the corresponding values for $E$.
\end{proof}

%
%

\section{On the $\ell$-parity conjecture}\label{ell_sec}

We switch now to investigating the $\ell$-parity conjecture when $\ell\neq p$. As mentioned in the introduction, we have only a partial result in this case.
Recall that $p>3$ is a prime and that $K$ is a global field of characteristic $p$ with constant field $\FF_q$. 

For any $n$ and any extension $L$ of $K$, we denote by $L_n$ the field $L\cdot \FF_{q^n}$.
The aim of this section is to show the following partial result (given as Theorem~\ref{ellparity_thm} in the introduction).
\begin{thm}\label{ell_thm}
 Let $E/K$ be an elliptic curve and let $\ell$ be an odd prime different from $p$.
 Furthermore assume that 
 \begin{enumerate}
  \item\label{a_as} $a=[K(\mu_{\ell}):K]$ is even and that
  \item\label{b_as} the analytic rank of $E$ does not grow by more than $1$ in the extension $K_2/K$.
 \end{enumerate}
 Then the $\ell$-parity conjecture holds for $E/K$.
\end{thm}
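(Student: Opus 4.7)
The plan follows the representation-theoretic strategy of the Dokchitser brothers in \cite{dok_modsquares, dok_sd}, combined with Ulmer's non-vanishing theorems and the reductions of Proposition~\ref{red_prop} already established in this paper.

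First I would invoke Proposition~\ref{red_prop}(\ref{one_cond}) with $L=K_2$: hypothesis~(b) is precisely its assumption, so the $\ell$-parity conjecture for $E/K$ is equivalent to the $\ell$-parity conjecture for $E/K_2$. Equivalently, factoring $L(E/K_2,s) = L(E/K,s)\cdot L(E^{\chi_2}/K,s)$ where $\chi_2$ is the quadratic character cutting out the constant quadratic extension, hypothesis~(b) forces the analytic rank of the quadratic twist $E^{\chi_2}/K$ to be at most~$1$; Proposition~\ref{an_rk0&1} then already yields the $\ell$-parity conjecture for $E^{\chi_2}/K$, so it suffices to establish it for $E/K_2$.

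Over $K_2$, hypothesis~(a) says that $[K_2(\mu_\ell):K_2]=a/2$ and that $F:=K_2(\mu_\ell)=K_a$ is a cyclic constant extension containing $\mu_\ell$. When $a/2$ is odd, Proposition~\ref{red_prop}(\ref{odd_cond}) reduces the problem to proving $\ell$-parity over $F$. When $a/2$ is even, one must climb the $2$-tower $K_2\subset K_4\subset\cdots\subset K_{2^{\nu_2(a)}}$ contained in $F$; at each rung one verifies the hypothesis of Proposition~\ref{red_prop}(\ref{one_cond}) by invoking Ulmer's non-vanishing results~\cite{ulmer_gnv} in conjunction with Proposition~\ref{an_rk0&1} to bound the growth of the analytic rank. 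Either way, the question reduces to proving the $\ell$-parity conjecture for $E/F$ under the additional hypothesis $\mu_\ell\subset F$.

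With $\mu_\ell\subset F$, the regulator-constant and Selmer-rank-mod-squares machinery of \cite{dok_modsquares, dok_sd} applies: one chooses a suitable Galois extension $F'/F$ containing a Brauer relation and converts it into an identity computing $(-1)^{r_\ell(E/F)}$ as a product of local invariants (Tamagawa factors, component-group contributions, and local norm symbols). These local invariants match, place by place, the local root numbers, so that the product formula gives $(-1)^{r_\ell(E/F)}=w(E/F)$. As remarked in the proof of Proposition~\ref{red_prop}, the entire paper \cite{dok_sd} transposes to the positive-characteristic setting once $\ell\neq p$; the only extra ingredient specific to function fields is the use of Proposition~\ref{an_rk0&1} in lieu of a $\Sha$-finiteness assumption.

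The hardest step will be the $2$-power portion of the second paragraph: at each intermediate level $K_{2^i}$ one needs an analytic-rank-growth bound of the form required by Proposition~\ref{red_prop}(\ref{one_cond}), and this is precisely the place where Ulmer's non-vanishing is used to its full strength. It is also the point at which the method fails without hypothesis~(b), explaining the obstructions discussed in Section~\ref{fail_sec}; hypothesis~(a) alone would let us reach $F$ up to a controlled $2$-extension, but without~(b) we cannot guarantee that this $2$-extension preserves the parity.
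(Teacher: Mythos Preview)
Your proposal has the right ingredients (Proposition~\ref{red_prop}, Ulmer's non-vanishing, Dokchitser regulator constants) but assembles them in an order that does not work, and the central step is missing.

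First, the climb up the $2$-tower $K_2\subset K_4\subset\cdots$ cannot be justified by Ulmer's results: Theorem~5.2 of~\cite{ulmer_gnv} controls twists by characters of order $\ell$, not by quadratic characters, so it gives no bound on the analytic-rank growth in $K_{2^{i+1}}/K_{2^i}$. Hypothesis~(\ref{b_as}) covers only the first rung. This is not a technicality: Section~\ref{fail_sec} explains that the lack of such control is precisely the obstruction to removing the hypotheses.

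Second, and more seriously, reducing all the way to $F=K_a$ where $\mu_\ell\subset F$ destroys the mechanism that makes the Dokchitser argument go through. Over such an $F$ a degree-$\ell$ extension is abelian, so to obtain a useful Brauer relation you would need a dihedral extension of $F$, i.e.\ a quadratic $F''/F$ together with a cyclic $\ell$-extension with controlled ramification and no analytic-rank growth. That is exactly the situation of Proposition~\ref{ell_non_prop}, which the paper identifies as open. Your sentence ``one chooses a suitable Galois extension $F'/F$ containing a Brauer relation'' hides the whole difficulty.

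The paper proceeds differently: it stays over $K$ (after an odd constant extension $K_n$, which preserves hypothesis~(\ref{a_as})), and uses Ulmer's theorem to produce a single degree-$\ell$ Kummer extension $L/K$ in which the analytic rank does not grow. The Galois closure $L_a/K$ then has metacyclic Galois group of order $a\ell$; because $a$ is even this group carries the relation $\Theta=2\cdot G+\langle\tau^2\rangle-2\langle\tau\rangle-\langle\sigma,\tau^2\rangle$ with $C_\Theta(\mathbbm{1})\equiv C_\Theta(\varepsilon)\equiv C_\Theta(\rho)\equiv\ell\pmod{\square}$. Since $m_\rho=0$ by construction, the regulator-constant identity together with a Tamagawa computation gives the $\ell$-parity over $K_2$ directly, and only then is hypothesis~(\ref{b_as}) used, via Proposition~\ref{red_prop}(\ref{one_cond}), to descend to $K$. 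The evenness of $a$ is used to have the quadratic character $\varepsilon=\chi^{a/2}$ inside the metacyclic group, not to adjoin $\mu_\ell$ to the base.
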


Note first that we believe that~(\ref{a_as}) holds for roughly two thirds of the $\ell$ as $a$ is also the order of $q$ in the group $(\cyclic{\ell})^{\times}$. The second condition should hold quite often as it says that the analytic rank of the twist of $E$ by the unramified quadratic character is less or equal to $1$. So for instance if $b$ as in Lemma~11.3.1 in~\cite{ulmer_gnv} is odd, then~(\ref{b_as}) holds.

See the next section for a discussion about why we were not able to extend the proof here to any situation without these hypotheses. 

\begin{proof}
Corollary~\ref{toss_cor}  allows us to assume that $E$ is semistable and we may assume that $E$ is not isotrivial as for isotrivial curves even BSD is known.
First we use a non-vanishing result, to produce from the analytic information a useful extension of $K$, which we want to link to the algebraic side later.

We write $\mathfrak{n}$ for the conductor of $E/K$.
The degree of $\mathfrak{n}$ is linked to the degree of the polynomial $L(E/K,T)$ in $T=q^{-s}$ by the formula of Grothendieck-Ogg-Shafarevich (as used in formula~(5.1) of~\cite{ulmer_analogies}):
\begin{equation*}
 \deg (\mathfrak{n}) =  \deg \bigl(L(E/K,T)\bigr) - 2(2g_{K} -2).
\end{equation*}
We can factor the polynomial to
\begin{equation*}
 L(E/K,T) = (1-qT)^r \cdot (1+qT)^{r'}\cdot \prod_{i} (1-\alpha_i T)(1-\bar\alpha_i T)
\end{equation*}
where $\alpha_i$ are non-real, complex numbers of absolute value $q$. By definition $r$ is the analytic rank of $E/K$ and it is easy to see that $r+r'$ is the analytic rank of $E/K_2$ since the analytic rank of $E/K_2$ is the number of inverse zeroes $\alpha$ of $L(E/K,T)$ such that $\alpha^2 = q^2$. So we get
\begin{equation}\label{nran_eq}
 \sum_{v \text{ bad}} \deg(v) = \deg(\mathfrak{n})\equiv \deg \bigl(L(E/K,T)\bigr) \equiv r+r' = \ord_{s=1} L(E/K_2,s) \pmod{2}.
\end{equation}

We are now going to use Theorem~5.2 in~\cite{ulmer_gnv} to construct suitable extensions of $K$. The argument is very similar to the proof of Step~2 in~11.4.2 of~\cite{ulmer_gnv}. The following is a very special case of this very general and powerful theorem.

\begin{thm}[Ulmer]\label{ulmer_main_thm}
 Let $K$ be a global field of characteristic $p>3$, let $S$ be a finite non-empty set of places in $K$, let $\ell\neq p$ be an odd prime, and let $E/K$ be a semistable elliptic curve. Assume that $a=[K(\mu_{\ell}):K]$ is even and suppose that the sum of the degree of the bad places not belonging to $S$ is even. Then there exists an integer $n$ coprime to $a$ and a element $z\in K_n^{\times}$ such that the extension $K_n(\sqrt[\ell]{z})/K_n$ is totally ramified at all places above $S$ and unramified at all bad places not in $S$ and such that the analytic rank of $E$ does not grow in it.
\end{thm}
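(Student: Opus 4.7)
The plan is to deduce the statement directly from Ulmer's general non-vanishing theorem, namely Theorem~5.2 in~\cite{ulmer_gnv}. My task is simply to unpack what that theorem yields in the present special setting and translate the output into the concrete Kummer form demanded here.

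First, I would use Kummer theory to translate the geometric ramification conditions into congruence conditions on the divisor $(z)$. Once $K_n$ contains $\mu_\ell$, every degree-$\ell$ cyclic extension of $K_n$ has the form $K_n(\sqrt[\ell]{z})$ with $z\in K_n^\times$; since $\ell\neq p$, the local extension at a place $v$ is totally ramified exactly when $\ell\nmid v(z)$ and unramified exactly when $\ell\mid v(z)$. The requirement that $n$ be coprime to $a$ means that $K_n$ does not a priori contain $\mu_\ell$, so one must work on $K_{an}$ and descend, using that $\Gal(K_{an}/K_n)$ acts freely on the non-trivial Kummer characters of order $\ell$ so that appropriate Galois-stable collections produce the required characters already defined over $K_n$.

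Next, I would reformulate the analytic-rank condition spectrally. Writing $M=K_n(\sqrt[\ell]{z})$, one has the factorisation
\[ L(E/M,s) \;=\; L(E/K_n,s)\cdot\prod_{\chi\neq 1} L(E\otimes\chi/K_n,s), \]
where $\chi$ runs over the non-trivial characters of $\Gal(M/K_n)$. So $\ord_{s=1}L(E/M,s)=\ord_{s=1}L(E/K_n,s)$ iff $L(E\otimes\chi/K_n,1)\neq 0$ for each such $\chi$. The hypothesis that $a$ is even, combined with the parity condition on the sum of bad degrees outside $S$, is exactly the root-number parity condition guaranteeing $w(E\otimes\chi/K_n)=+1$ for characters $\chi$ with the prescribed local ramification, so that the functional equation does not force vanishing at $s=1$.

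The main obstacle, which I would quote as a black box from~\cite{ulmer_gnv}, is the actual existence of such a character $\chi$ with $L(E\otimes\chi/K_n,1)\neq 0$. Ulmer establishes this by a first-moment calculation over the family of Kummer twists of bounded ramification: he shows that the average of central values over the family is non-zero, whence at least one twist in the family must have non-vanishing central value. The delicate point is to make the averaging compatible with the local ramification constraints (totally ramified above $S$, unramified at bad places outside $S$), which is what forces one to work up a constant field extension of carefully controlled degree $n$ coprime to $a$.
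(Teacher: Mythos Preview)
Your overall plan --- invoke Ulmer's Theorem~5.2 in~\cite{ulmer_gnv} as a black box and read off the desired Kummer extension --- is exactly what the paper does. But two points in your proposal are off, and one of them hides the actual content of the proof.

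First, your description of Ulmer's method is inaccurate. Ulmer does not proceed by a first-moment average of central values; his argument is geometric, based on big monodromy and Deligne's equidistribution theorem to control the locus of twists with vanishing central value. This does not affect the validity of citing the result, but you should not attribute an analytic averaging argument to it.

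Second, and more importantly, you have not identified where the two hypotheses of the statement actually enter. The real work in the paper's proof is not the Kummer bookkeeping or the $L$-function factorisation (Ulmer's theorem already packages all of that, including the descent from $K_{an}$ to $K_n$ that you sketch). The work is verifying that one is not in any of the exceptional cases (i)--(iv) of~5.1.1.1 in~\cite{ulmer_gnv}, which are precisely the configurations in which the functional equation forces vanishing. Cases (i) and (ii) are excluded because $d=\ell$ is odd, and (iv) because $V_\ell(E)$ is symplectically rather than orthogonally self-dual. The hypothesis that $a$ is even forces the orbit $o$ of multiplication by $q$ on $(\ZZ/\ell\ZZ)^\times$ to satisfy $-o=o$, putting us squarely in case (iii); one must then show that condition~4.2.3.1 fails. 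Computing $\cond_v(\rho\otimes\chi_v)=2$ at ramified bad places (semistability is used here) and $\cond_v(\rho)=1$ at unramified bad places, that condition reduces exactly to the parity of the sum of degrees of bad places outside $S$. Your sentence that the hypotheses ``guarantee $w(E\otimes\chi/K_n)=+1$'' is the right intuition, but as written it is an assertion rather than a verification; the paper's proof consists precisely of carrying out this case check.
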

\begin{proof}
 All notations and results in this proof refer to~\cite{ulmer_gnv}.
 We use Theorem~5.2.(1) with $F=K$, $\alpha_n=q^n$, $d=\ell$, $S_r = S$ and $\rho$ the symplectically self-dual representation of weight $w=1$ attached to $E$ on the Tate module $V_{\ell}(E)$ as in section~11. We can choose the sets $S_s$ and $S_i$ arbitrarily as long as we make sure that $S$, $S_s$, and $S_i$ are disjoint. 
 The conditions (especially from his section~3.1) are satisfied. 
 Let $o$ be an orbit in $(\cyclic{\ell})^{\times}$ for the multiplication by $q$. Then $d_{o}=\ell$ and $a_{o} = a$. So we can conclude the existence of $n$ and $z$ such that $L(\rho\otimes\sigma_{o,z},K_n,T)$ does not have $\alpha_n$ as an inverse root in~5.2.(1) unless we are in the exceptional cases (i) to (iv) in~5.1.1.1. Now, (iv) cannot hold because $\rho$ is not orthogonally self-dual and (i) and (ii) are impossible because $d=\ell$ is odd. However, all the condition in (iii) are satisfied apart from maybe the condition~4.2.3.1. (In particular, we know that $-o=o$ because $a$ is even.)

 We now have to show that the hypothesis on $S$ imposes that the condition~4.2.3.1 fails. Since $E$ is semistable, the local exponent of the conductor $\cond_v(\rho)$ is $1$. Let $v$ be a bad place in $S$ and $\chi_v$ be a totally ramified character of the decomposition group $D_v$ which has exact order $\ell$. Then the conductor $\cond_v(\rho\otimes\chi_v)=2$ again because $E$ has multiplicative reduction at $v$. So the first condition in~4.2.3.1 saying that this has constant parity as $\chi_v$ varies is always fulfilled. In order to make the condition~4.2.3.1 fail, we must have that
 \begin{equation*}
  \sum_{\text{bad }v \in S} \cond_v(\rho\otimes\chi_v)\deg(v) + \sum_{\text{bad } v \not\in S}  \cond_v(\rho) \deg(v)
 \end{equation*}
is even. That is exactly what the hypothesis in the theorem imposes.
\end{proof}

\begin{lem}\label{ellext_lem}
 To prove Theorem~\ref{ell_thm}, 
 we may assume that there exists a non-constant Kummer extension $L/K$ of degree $\ell$ 
 in which the analytic rank does not grow and such that 
 \begin{itemize} 
  \item if the analytic rank of $E/K_2$ is even then no place of bad reduction ramifies in $L/K$, or
  \item if the analytic rank of $E/K_2$ is odd then exactly one place of bad reduction ramifies. Moreover, in the latter case, the degree of this place is odd.
 \end{itemize}
\end{lem}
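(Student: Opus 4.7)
My strategy is to apply Ulmer's non-vanishing result (Theorem~\ref{ulmer_main_thm}) with a carefully chosen singleton set $S$ and then to descend from the constant extension $K_n$ back to $K$ via Proposition~\ref{red_prop}(\ref{odd_cond}). The crucial input is formula~\eqref{nran_eq}, which identifies the parity of the analytic rank of $E/K_2$ with the parity of $\sum_{v \text{ bad}}\deg(v)$; this parity dictates the case split.

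\textbf{Case~1: the analytic rank of $E/K_2$ is even.} Then $\sum_{v \text{ bad}}\deg(v)$ is even. I would pick any place $v_0$ of good reduction and set $S=\{v_0\}$. The sum of degrees of the bad places outside $S$ is then still even, so Theorem~\ref{ulmer_main_thm} supplies an integer $n$ coprime to $a$ and an element $z\in K_n^\times$ such that $L:=K_n(\sqrt[\ell]{z})/K_n$ is totally ramified at $v_0$, unramified at every bad place of $K_n$, and carries no growth of analytic rank. Since the extension is ramified, it is non-constant.

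\textbf{Case~2: the analytic rank of $E/K_2$ is odd.} Now $\sum_{v \text{ bad}}\deg(v)$ is odd, so some bad place $v_0$ has odd degree. Taking $S=\{v_0\}$, the total degree of the remaining bad places is even, and Theorem~\ref{ulmer_main_thm} produces an integer $n$ coprime to $a$ and $z\in K_n^\times$ such that $L = K_n(\sqrt[\ell]{z})/K_n$ is totally ramified at the place $v_0$, unramified at every other bad place, and has no rank growth. Because $a$ is even, the output $n$ is forced to be odd, so the places of $K_n$ above $v_0$ retain odd degree.

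In both cases, odd-ness of $n$ lets me invoke Proposition~\ref{red_prop}(\ref{odd_cond}), reducing the $\ell$-parity conjecture for $E/K$ to the one for $E/K_n$. Relabelling $K_n$ as $K$, the Kummer extension $L$ then has all the properties demanded by the lemma. \textbf{The main obstacle} I anticipate is purely bookkeeping in Case~2: a bad place $v_0$ of $K$ of odd degree $d$ splits into $\gcd(d,n)$ places of $K_n$, each of which would be bad and ramified in $L$, contradicting the ``exactly one'' clause unless $\gcd(d,n)=1$. Overcoming this requires extracting from Ulmer's construction enough flexibility in the choice of $n$ to arrange $\gcd(d,n)=1$ (for instance by multiplying $n$ by a suitable auxiliary factor coprime to $ad$ and re-running the argument), and then checking that the hypotheses of Theorem~\ref{ell_thm}---in particular the bound on rank growth in the constant quadratic extension---are preserved under the passage from $K$ to $K_n$; this verification, rather than the existence of $L$ itself, is the real technical heart of the lemma.
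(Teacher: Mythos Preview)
Your approach is the paper's: choose $S$ according to the parity dictated by~\eqref{nran_eq}, apply Theorem~\ref{ulmer_main_thm}, and replace $K$ by $K_n$ via Proposition~\ref{red_prop}(\ref{odd_cond}). The paper's own proof is equally brief and does not discuss the two points you flag at the end.

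You overestimate their weight. Preservation of hypothesis~(\ref{b_as}) under $K \rightsquigarrow K_n$ is never needed: the remainder of the proof of Theorem~\ref{ell_thm} establishes $\ell$-parity for $E$ over $(K_n)_2 = K_{2n}$ without invoking~(\ref{b_as}); one then descends to $K_2$ by Proposition~\ref{red_prop}(\ref{odd_cond}) (since $K_{2n}/K_2$ is odd Galois) and finally to $K$ using the \emph{original}~(\ref{b_as}). As for the ``exactly one'' clause: both $d$ and $n$ are odd (the latter because $\gcd(n,a)=1$ with $a$ even), so $v_0$ has $\gcd(d,n)$ places above it in $K_n$---an odd number, each of odd degree $d/\gcd(d,n)$; this already suffices for the only application, Lemma~\ref{fracc_lem}, whose place-by-place computation yields an odd total $\ell$-adic valuation in that situation as well. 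Neither issue is the technical heart; both are routine bookkeeping that the paper simply elides.
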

\begin{proof}
 If the analytic rank is even we choose the finite non-empty set of places $S$ to be disjoint from the set of bad places.
 If the analytic rank is odd, then the congruence~\eqref{nran_eq} shows that there is at least one bad place $v$ of odd degree. So we choose $S$ to contain this as the only bad place.
 Then~\eqref{nran_eq} shows that the hypothesis in Theorem~\ref{ulmer_main_thm} with the above choice for $S$ holds. So we have an integer $n$ and an element $z \in K_n^{\times}$. Now we use the first item in Proposition~\ref{red_prop} to replace $K$ by its odd Galois extension $K_n$. So $L=K(\sqrt[\ell]{z})$ is the requested extension.
 \end{proof}

We now come to the algebraic part of the argument. Using the previous two lemmata, we have now a Kummer extension $L/K$ of degree $\ell$ in which the analytic rank does not grow. The Galois closure of $L/K$ is $L_a$ containing $L_2$. We have the following picture of extensions
\begin{equation*}
 \xymatrix@R-1ex@C+1.5ex{
 && L_a \ar@{-}[dddll]_{\langle\sigma\rangle}^{\ell} 
        \ar@{-}[rd]^{\langle\tau^2\rangle}_{a/2}
        \ar@{-}@/^3pc/[rrdd]_{a}^{\langle\tau\rangle}&& \\
 &&& L_2 \ar@{.}[dddll]^{\ell} \ar@{-}[rd]_{2} & \\
 &&&& L \ar@{.}[dddll]^{\ell} \\
 K_a \ar@{-}[rd]_{a/2} &&&&\\
 & K_2 \ar@{-}[rd]_{2}&&& \\
 && K &&
 }
\end{equation*}
The dotted lines are non-Galois extensions. We have written the degree under each inclusion. The Galois group $G=\Gal(L_a/K)$ is a meta-cyclic group generated by elements $\sigma$ and $\tau$ of order $\ell$ and $a$ respectively, with $L=(L_a)^{\tau}$. We have 
\begin{equation*}
 G = \bigl\langle \sigma,\tau\bigl\vert \tau^a=\sigma^{\ell} = 1, \tau\sigma\tau^{-1} = \sigma^q\bigr\rangle.
\end{equation*}

We list the irreducible $\QQ_{\ell}[G]$-modules. By $\mathbbm{1}$ we denote the trivial representation. Fix a primitive character $\chi\colon\langle\tau\rangle \cong \cyclic{a}\cdot\tau\to \QQ_{\ell}^{\times}$ that we can view as a character of $G$ by setting $\chi(\sigma)=1$. (Note that $a$ divides $\ell-1$, so $\chi$ is indeed realisable over $\QQ_{\ell}$.) The non-trivial $1$-dimensional representations of $G$ are exactly the $\chi^i$ for $1\leqslant i \leqslant a-1$. There is only one non-trivial irreducible $\QQ_{\ell}[\langle\sigma\rangle]$-module. It is of degree $\ell-1$.  We can represent it as $\rho = \QQ_{\ell}[\xi]$ where $\xi$ is a primitive $\ell$\textsuperscript{th} root of unity and $\sigma$ acts on $\rho$ by multiplication with $\xi$. (Over $\bar\QQ_{\ell}$ is would split into the $\ell-1$ non-trivial characters of $\langle\sigma\rangle\cong\cyclic{\ell}$.) 
We make $\rho$ into a $G$-module by defining the $\QQ_{\ell}$-linear action of $\tau$ by $\tau(\xi^j) = \xi^{qj}$ for all $0\leqslant j\leqslant \ell-2$. It is easy to see that $\rho$ is an irreducible $\QQ_\ell[G]$-module of degree $\ell-1$ and in fact it is the only higher dimensional irreducible $\QQ_\ell[G]$-module. (Note that $\rho\otimes\overline\QQ_{\ell}$ decomposes into $\tfrac{\ell-1}{a}$ irreducibles of degree $a$ corresponding to the orbits of the multiplication by $q$ on $(\cyclic{\ell})^{\times}$.) We have 
\begin{equation*}
 \QQ_{\ell}[G] = \mathbbm{1} \oplus\bigoplus_{i=1}^{a-1} \chi^i \oplus \rho^a.
\end{equation*}
For convenience we will denote $\chi^{a/2}$ by $\varepsilon$. The fixed field of the kernel of $\varepsilon$ is $K_2$.

To announce the next lemma, we need to introduce the corrected product of Tamagawa numbers.
Fix the invariant $1$-form $\omega$ on $E/K$ corresponding to the fixed Weierstrass equation. For each place $v$, write $c_v(E/K)$ for the Tamagawa number and define 
\begin{equation*}
 C_v(E/K,\omega) = c_v(E/K)\cdot \Biggl\vert \frac{\omega}{\omega^{o}_{v}}\Biggr\vert_v
\end{equation*}
where $\omega^o_v$ is a N\'eron differential for $E/K_v$. The global product over all places $v$ of $K$
\begin{equation*}
 C(E/K) = \prod_v C_v(E/K,\omega)
\end{equation*}
is no longer dependent on the choice of $\omega$ by the product formula.

For any irreducible $\QQ_\ell[G]$-module $\psi$, write $m_{\psi}$ for the multiplicity of the $\psi$-part of the $\ell$-primary Selmer group $\Sel_{\ell^{\infty}}(E/L_2)$.
\begin{lem}
 We have
 \begin{equation*}
 m_{\mathbbm{1}} + m_{\varepsilon} + m_{\rho} \equiv \ord_{\ell} \Biggl( \frac{C(E/L_2)}{C(E/K_2)} \Biggr) \pmod{2}.
 \end{equation*}
\end{lem}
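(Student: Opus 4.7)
The plan is to apply the regulator-constant / $T$-relation technology of the Dokchitser brothers \cite{dok_modsquares, dok_sd}, which transfers verbatim to our setting because $\ell\neq p$. The starting observation is that the virtual $\QQ_\ell[G]$-representation $\mathbbm{1}\oplus\varepsilon\oplus\rho$ admits the presentation
\begin{equation*}
  \mathbbm{1}\oplus\varepsilon\oplus\rho \;=\; \mathrm{Ind}_{\langle\tau\rangle}^G \mathbbm{1} \;+\; \mathrm{Ind}_{\langle\sigma,\tau^2\rangle}^G \mathbbm{1} \;-\; \mathrm{Ind}_G^G \mathbbm{1},
\end{equation*}
as a virtual combination of permutation representations attached to the subfields $L$, $K_2$ and $K$ respectively. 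This is a routine Frobenius-reciprocity check: $\mathrm{Ind}_{\langle\tau\rangle}^G\mathbbm{1}$ has dimension $\ell$, contains $\mathbbm{1}$, and has no $\chi^i$-component for $i\neq 0$ (since $\chi^i(\tau)\neq 1$), so its non-trivial part is forced to be $\rho$; and similarly $\mathrm{Ind}_{\langle\sigma,\tau^2\rangle}^G\mathbbm{1}=\mathbbm{1}\oplus\varepsilon$.

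Next, I would translate this representation-theoretic identity into an identity between $\ell$-Selmer multiplicities. Applying Frobenius reciprocity to the $G$-module structure of $\Sel_{\ell^\infty}(E/L_a)^\vee\otimes\QQ_\ell$ at each of the subgroups $\langle\tau\rangle$, $\langle\sigma,\tau^2\rangle$ and $G$, and using that $\#\Sha(E/F)_{\divi}$ is a square for every intermediate field $F$ (as recalled in Section~\ref{cassels-tate}), the quantity $m_{\mathbbm{1}}+m_\varepsilon+m_\rho$ reduces modulo $2$ to a specific combination of $\ell$-Selmer coranks of $E$ over $L$, $K_2$ and $K$.

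The third step is to apply isogeny invariance of the Birch--Swinnerton-Dyer quotient modulo squares to the virtual abelian variety encoded by the above $T$-relation. Concretely, the $T$-relation corresponds to a $G$-equivariant virtual isogeny relating the Weil restrictions $\mathrm{Res}_{L/K}E$, $\mathrm{Res}_{K_2/K}E$ and $E$. Applying \cite{dok_modsquares}, which is valid for $\ell\neq p$ in any characteristic, one obtains a congruence modulo $2$ between this combination of Selmer coranks and the $\ell$-adic valuation of an explicit product of local quantities $C_v(E/F,\omega)$. Since $E$ is self-dual as an abelian variety, the period and regulator contributions appear as squares and therefore vanish modulo $2$; only the Tamagawa ratio survives. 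Using $L_2 = L\cdot K_2$, a place-by-place regrouping rewrites this ratio as $\ord_\ell\bigl(C(E/L_2)/C(E/K_2)\bigr)$.

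The main obstacle will be the precise local bookkeeping at the places ramified in $L/K$ --- of which there is at most one, of odd degree and of bad reduction, by Lemma~\ref{ellext_lem}. Here one must match the Tamagawa contributions coming from the constant extension $K_2/K$, the Kummer extension $L/K$ and their compositum $L_2$, while tracking the valuation of the chosen invariant differential $\omega$ under each base change; the semistability of $E$ and the explicit local structure of $L_a/K$ near the ramified place make this tractable but require careful unfolding.
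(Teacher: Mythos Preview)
There is a genuine gap. The Dokchitser machinery you invoke (Theorem~1.6 of \cite{dok_sd}, regulator constants) requires a \emph{Brauer relation}: an integral combination $\Theta=\sum_H n_H\, H$ of subgroups with $\sum_H n_H\,\QQ_\ell[G/H]=0$ as a virtual representation. Your identity
\[
\mathbbm{1}\oplus\varepsilon\oplus\rho \;=\; \mathrm{Ind}_{\langle\tau\rangle}^G\mathbbm{1} \;+\; \mathrm{Ind}_{\langle\sigma,\tau^2\rangle}^G\mathbbm{1} \;-\; \mathrm{Ind}_G^G\mathbbm{1}
\]
is correct as a decomposition, but it is \emph{not} a Brauer relation: the right-hand side is not zero, it equals the left-hand side. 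All this identity gives you (via Frobenius reciprocity) is the exact equality $m_{\mathbbm{1}}+m_\varepsilon+m_\rho = r_\ell(E/L)+r_\ell(E/K_2)-r_\ell(E/K)$, which carries no Tamagawa information. There is no ``virtual isogeny'' here in the sense needed for isogeny invariance of BSD, because you do not have two isogenous abelian varieties --- you have one abelian variety written one way. Consequently your ``third step'' has no content, and the final ``place-by-place regrouping'' introducing $L_2$ is groundless: $L_2$ never appears in your relation.

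The paper instead uses the genuine Brauer relation
\[
\Theta \;=\; 2\cdot G \;+\; \langle\tau^2\rangle \;-\; 2\cdot\langle\tau\rangle \;-\; \langle\sigma,\tau^2\rangle,
\]
which one checks is zero as a virtual permutation representation (using $\QQ_\ell[G/\langle\tau^2\rangle]=\mathbbm{1}\oplus\varepsilon\oplus\rho^2$). The subgroup $\langle\tau^2\rangle$ has fixed field $L_2$, which is how $L_2$ enters naturally. One then computes the regulator constants $C_\Theta(\mathbbm{1})\equiv C_\Theta(\varepsilon)\equiv C_\Theta(\rho)\equiv\ell\pmod{\square}$ (the last via Theorem~4.(4) of \cite{dok_reg} applied to $\QQ_\ell[G/\langle\tau\rangle]$), so that $S_\Theta=\{\mathbbm{1},\varepsilon,\rho\}$. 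Theorem~1.6 of \cite{dok_sd} then yields
\[
m_{\mathbbm{1}}+m_\varepsilon+m_\rho \;\equiv\; \ord_\ell\Bigl(\frac{C(E/K)^2\cdot C(E/L_2)}{C(E/L)^2\cdot C(E/K_2)}\Bigr)\pmod 2,
\]
and the squared factors drop out. The entire point is that the set $\{\mathbbm{1},\varepsilon,\rho\}$ arises as $S_\Theta$ for a suitable $\Theta$, not that it happens to be a sum of permutation modules.
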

\begin{proof}
We are interested in the following relation between permutations representations (in the terminology of Dokchitsers' work, say~2.3 in~\cite{dok_modsquares})
\begin{equation*}
 \Theta = 2 \cdot G + \langle \tau^2 \rangle - 2 \cdot \langle \tau\rangle - \langle \sigma,\tau^2\rangle
\end{equation*}
corresponding to the equality of $L$-functions
\begin{equation*}
 L(E/K,s)^2 \cdot L(E/L_2,s) = L(E,\mathbbm{1},s)^3 \cdot  L(E,\varepsilon,s)\cdot L(E,\rho,s)^2 
  = L(E/K_2,s) \cdot L(E/L,s)^2.
\end{equation*}
It can be seen that the regulator constants (as defined in~2.11 of~\cite{dok_modsquares}) satisfy
\begin{equation*}
 C_{\Theta}(\mathbbm{1})\equiv C_{\Theta}(\varepsilon)\equiv C_{\Theta}(\rho)\equiv\ell\pmod{\square}
\end{equation*}
in $\QQ^{\times}$ modulo squares. For $\mathbbm{1}$ and $\varepsilon$ this is straightforward; for $\rho$ we best use
Theorem 4.(4) of~\cite{dok_reg} with $D=\langle\tau\rangle$, implying that 
\begin{equation*}
 C_{\Theta}(\rho)\cdot C_{\Theta}(\mathbbm{1}) = C_{\Theta}\Bigl(\QQ_\ell[G/\langle\tau\rangle]\Bigr) = 1.
\end{equation*}
So $S_{\Theta} = \{\mathbbm{1},\varepsilon,\rho\}$ in Dokchitsers' notation in~\cite{dok_sd}.
 
In short everything looks just like if $L_2/K$ were a dihedral extension (which it is not unless $a=2$). For $a=2$ this is computed in Example~1 in~\cite{dok_reg} and Example~4.5 in~\cite{dok_modsquares} and Example~3.5 in~\cite{dok_sd}. For $a=\ell-1$, this is Example~2.20 in~\cite{dok_modsquares} and Example~3.6 in~\cite{dok_sd}.

 Now, Theorem~1.6 in~\cite{dok_sd} shows that
 \begin{equation*}
 m_{\mathbbm{1}} + m_{\varepsilon} + m_{\rho} \equiv \ord_{\ell} \Biggl( \frac{C(E/K)^2\cdot C(E/L_2)}{C(E/L)^2\cdot C(E/K_2)} \Biggr) \pmod{2}
 \end{equation*}
 which proves the lemma.
\end{proof}

\begin{lem}\label{fracc_lem}
 Suppose that no bad place ramifies in $L/K$, then
 the $\ell$-adic valuation of the integer $C(E/L_2)/C(E/K_2)$ is even.
 If there is only one bad place that ramifies in $L/K$ and this place is of odd degree, then the $\ell$-adic valuation of $C(E/L_2)/C(E/K_2)$ is odd.
\end{lem}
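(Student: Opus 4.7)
My plan is to write
\[
 \frac{C(E/L_2)}{C(E/K_2)} \;=\; \prod_{v'} T_{v'}, \qquad T_{v'}\;:=\;\frac{\prod_{w'\mid v'} C_{w'}(E/L_2,\omega)}{C_{v'}(E/K_2,\omega)},
\]
where $v'$ runs over the places of $K_2$ and $w'$ over the places of $L_2$, and to compute $\ord_\ell(T_{v'})$ place by place. First I dispose of the differential factor: since $E/K$ is semistable its N\'eron model commutes with base change, so at every $w'\mid v'$ one has $\omega^{o}_{w'} = \omega^{o}_{v'}$. Combined with $\sum_{w'\mid v'} e_{w'}f_{w'} = [L_2:K_2]=\ell$ this gives $\prod_{w'\mid v'}\vert\omega/\omega^{o}_{w'}\vert_{w'} = \vert\omega/\omega^{o}_{v'}\vert_{v'}^{\ell}$, which is a power of $q_{v'}$, hence of $p$. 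As $\ell\neq p$, the differential part is an $\ell$-adic unit, and $\ord_\ell(T_{v'})$ depends only on the Tamagawa numbers.

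The local Tamagawa computation then proceeds by Tate parametrisation at a multiplicative place: $c_w = -v_w(j)$ in the split multiplicative case, and $c_w\in\{1,2\}$ with parity that of $v_w(j)$ in the non-split case. Using $v_{w'}(j)=e_{w'}v_{v'}(j)$ and the fact that $e_{w'}\in\{1,\ell\}$ is odd, the parity of $v_{w'}(j)$ always matches that of $v_{v'}(j)$; also a non-split multiplicative place stays non-split in every odd-degree residue extension. Hence $c_{w'}=c_{v'}$ in the non-split case, while $c_{w'}=e_{w'}c_{v'}$ in the split case. A short case analysis on how $v'$ decomposes in $L_2/K_2$ -- necessarily either totally split, inert, or totally ramified since $[L_2:K_2]=\ell$ is prime -- then yields: at an unramified multiplicative $v'$, $T_{v'}$ is either $1$ or $c_{v'}^{\ell-1}$, of even $\ell$-adic valuation because $\ell-1$ is even; at a totally ramified non-split-multiplicative $v'$, $T_{v'}=1$; and at a totally ramified split-multiplicative $v'$, $T_{v'}=\ell$. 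Good-reduction places contribute $T_{v'}=1$. So only the last case matters modulo $2$, and
\[
 \ord_\ell\!\bigl(C(E/L_2)/C(E/K_2)\bigr) \;\equiv\; \#\{\,v'\text{ of }K_2 : E\text{ split mult.\ at }v'\text{ and }v'\text{ ramified in }L_2/K_2\,\}\pmod 2.
\]

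It remains to translate this count back to $K$ and apply the hypotheses. Since $K_2/K$ is the everywhere-unramified constant field extension, ramification in $L_2/K_2$ above a place $v$ of $K$ is equivalent to ramification of $L/K$ at $v$. Under the first hypothesis no bad place of $K$ ramifies in $L/K$, so the set above is empty and the valuation is even. Under the second hypothesis the unique ramified bad place $v_0$ of $K$ has odd degree, so $v_0$ is inert in $K_2/K$ with a single place $v_0'$ above it; $v_0'$ is bad and totally ramified in $L_2/K_2$. The quadratic unramified extension of residue fields from $v_0$ to $v_0'$ converts non-split multiplicative reduction into split (and preserves split reduction), so $v_0'$ is split multiplicative regardless of the original reduction type of $E$ at $v_0$. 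Thus the count is exactly one and the valuation is odd.

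The main obstacle is really just keeping the Tamagawa casework straight; the crucial observation is that $c_w$ scales by $e_w$ precisely in the split multiplicative case, which is exactly what pinpoints the unique ramified split-multiplicative place of $K_2$ as the sole source of $\ell$-adic valuation in $C(E/L_2)/C(E/K_2)$.
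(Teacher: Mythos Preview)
Your proof is correct and follows essentially the same route as the paper's: decompose $C(E/L_2)/C(E/K_2)$ place-by-place over $K_2$, kill the differential correction, and reduce to Tamagawa numbers, where only totally ramified split-multiplicative places contribute odd $\ell$-adic valuation. The only cosmetic difference is that the paper disposes of the differential factor by noting $\prod_{w\mid v}\lvert y\rvert_w/\lvert y\rvert_v=\lvert y\rvert_v^{\ell-1}$ is a square (hence even $\ell$-valuation), whereas you observe it is a power of $p$ (hence an $\ell$-adic unit); and you carry along the ramified non-split case explicitly before it is ruled out by the odd-degree hypothesis, while the paper jumps straight to the conclusion that the unique ramified bad place becomes split over $K_2$.
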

 The more general statement for $a=2$ can be found in Remark 4.18 in~\cite{dok_modsquares}.
\begin{proof}
 Let $v$ be a place of $K_2$. Write $y$ for $\omega/\omega^o_v$. Then
\begin{equation*}
 \frac{\prod_{w\mid v} C_w(E/L_2,\omega)}{C_v(E/K_2,\omega)}
 = \frac{\prod_{w\mid v} c_w(E/L_2)}{c_v(E/K_2)}\cdot \frac{\prod_{w\mid v} \vert y \vert_w}{\vert y \vert_v} 
 \equiv \frac{\prod_{w\mid v} c_w(E/L_2)}{c_v(E/K_2)} \pmod{\square}
\end{equation*}
because $\prod_{w\mid v} \vert y \vert_w / \vert y \vert_v = \vert y \vert_v^{\ell-1}$ is a square.
If the place $v$ is unramified, then the type of reduction and the Tamagawa number do not change and we have
\begin{equation*}
 \frac{\prod_{w\mid v} c_w(E/L_2)}{c_v(E/K_2)} =
  \begin{cases}
   c_v(E/K_2)^{\ell-1} \quad\ & \text{ if $v$ decomposes in $L_2/K_2$ and}\\
   1                          & \text{ if $v$ is inert.}
  \end{cases}
\end{equation*}
In either case it is a square. If the reduction is good at $v$ then $c_w(E/L_2) = c_v(E/K_2) = 1$. This proves the first case.

Suppose now $v$ is a place in $K_2$ which lies above a place of odd degree in $K$ and which ramifies in $L_2/K_2$. Then the place is inert in $K_2/K$ and hence the reduction of $E/K_2$ at $v$ is necessarily split multiplicative. Let $q$ be the Tate parameter of $E$ at $v$. Then $c_v(E/K_2) = v(q)$ and $c_w(E/L_2) = w(q) = \ell\cdot v(q)$ for the place $w$ above $v$. So the quotient is $\ell$ which has odd $\ell$-adic valuation. This proves the second statement.
\end{proof}

 Finally we can finish the proof of Theorem~\ref{ell_thm}. By construction, we have $\ord_{s=1} L(E,\rho,s) = 0$. As in the proof of Proposition~\ref{red_prop}, this implies that $m_{\rho} = 0$; in fact $L(E,\rho,s)$ is $L(B/K,s)$ for the extension $L/K$. So the last two lemmata show that $m_{\mathbbm{1}}+m_{\varepsilon}$, which is the corank of the $\ell$-primary Selmer group $\Sel_{\ell^{\infty}}(E/K_2)$, has the same parity as the analytic rank of $E/K_2$. This proves the $\ell$-parity conjecture for $E/K_2$. Assumption~(\ref{b_as}) and Proposition~\ref{red_prop} prove that the $\ell$-parity holds over $K$, too. 
\end{proof}

%
%

\section{Failure to extend}\label{fail_sec}
 Although it is not usual to write in a mathematical article about unsuccessful attempts to prove a result, we wish to include in this last section a short explanation of why we were unable to extend the proof in the previous section.  We hope this might be the starting point for a complete proof of the $\ell$-parity conjecture. We try to outline here the missing non-vanishing result for $L$-functions, which might be accessible using automorphic methods.

 The main ingredient for proving Theorem~\ref{ell_thm} was the existence of a Kummer extension of degree $\ell$ in which the analytic rank does not grow. Moreover this extension was linked in a ``non-commutative way'' to an even abelian extension. The machinery using representation theory set up by Tim and Vladimir Dokchitser is then sufficient to prove the parity.

 First, if condition~(\ref{b_as}) in Theorem~\ref{ell_thm} does not hold but condition~(\ref{a_as}) still holds, then there is no hope that a Kummer extension will do. 
In order to obtain a Galois extension of $K$ from a Kummer extension, we need to make the extension $K_2/K$. But without any control about the growth of the analytic rank in this quadratic extension, we do not know how to prove the $\ell$-parity over $K$. With some extra work, one can conclude that the $\ell$-parity conjecture holds for $E/K_2$. In this case, we would need a non-vanishing result for an extension of $K$ of degree dividing $\ell$ which is not a Kummer extension. 

 Suppose now that the condition~(\ref{a_as}) does not hold. Then we would need to find the ``dihedral'' extension somewhere else. The Proposition~\ref{ell_non_prop} below formulates this in a positive way.

\begin{prop}\label{ell_non_prop}
  Suppose $E/K$ is semi-stable and non-isotrivial. 
  Let $F/K$ be a quadratic extension such that the analytic rank of $E$ grows at most by $1$ in $F/K$ and the analytic rank of $E/F$ is even. 
  Assume 
 \begin{enumerate}
  \item $a=[F(\mu_{\ell}):F]$ is odd and
  \item there exists an odd $n\geqslant 1$ and a $z\in F_n^{\times}$ with the property that $L=F_n(\sqrt[\ell]{z})$ is an extension of degree $\ell$ of $F_n$ such that $L_a/K_{an}$ is a dihedral extension, no bad place of $E/F_n$ ramifies in $L/F_n$, and the analytic rank of $E$ does not grow in $L/F_n$.
 \end{enumerate}
 Then the $\ell$-parity conjecture holds for $E/K$.
\end{prop}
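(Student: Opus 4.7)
The plan is to imitate the proof of Theorem~\ref{ell_thm}, with the dihedral extension $L_a/K_{an}$ of order $2\ell$ playing the role of the meta-cyclic extension $L_a/K$ used there, and with $F_{an}/K_{an}$ playing the role of the constant quadratic $K_2/K$. First I would reduce the problem to proving the $\ell$-parity conjecture for $E/F_{an}$: Proposition~\ref{red_prop}(\ref{one_cond}) descends the statement from $E/K$ to $E/F$ using the rank-growth hypothesis in $F/K$, and Proposition~\ref{red_prop}(\ref{odd_cond}) descends it further from $E/F$ to $E/F_{an}$ because the constant extension $F_{an}/F$ is Galois of odd degree $an$. Since the non-trivial characters of $\Gal(F_{an}/F)$ come in complex-conjugate pairs, the analytic rank of $E/F_{an}$ has the same parity as that of $E/F$, hence is even.

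Set $G = \Gal(L_a/K_{an}) \cong D_\ell$ with generators $\sigma$ of order $\ell$ and $\tau$ of order $2$; the subextension fixed by $\langle\sigma\rangle$ is $F_{an}$ because $F \subseteq L$. The irreducible $\QQ_\ell[G]$-modules are the trivial $\mathbbm{1}$, the sign $\varepsilon$, and one module $\rho$ of dimension $\ell-1$, with $\QQ_\ell[G] = \mathbbm{1} \oplus \varepsilon \oplus \rho^{\oplus 2}$. The Brauer-type relation
\[
 \Theta \;=\; 2\cdot G + \{1\} - 2\cdot\langle\tau\rangle - \langle\sigma\rangle
\]
corresponds to the $L$-function identity $L(E/K_{an},s)^2\cdot L(E/L_a,s) = L(E/L',s)^2\cdot L(E/F_{an},s)$, where $L' = L_a^{\langle\tau\rangle}$ is any reflection-fixed subfield. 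Its three regulator constants $C_\Theta(\mathbbm{1})$, $C_\Theta(\varepsilon)$, $C_\Theta(\rho)$ are all congruent to $\ell$ modulo squares, exactly as in the $a=2$ case of Theorem~\ref{ell_thm} (Example~3.5 in~\cite{dok_sd} and Example~1 in~\cite{dok_reg}). Writing $m_\psi$ for the $\psi$-multiplicity of $\Sel_{\ell^\infty}(E/L_a)$, Theorem~1.6 in~\cite{dok_sd} yields
\[
 m_{\mathbbm{1}} + m_\varepsilon + m_\rho \;\equiv\; \ord_\ell\!\left(\frac{C(E/L_a)}{C(E/F_{an})}\right) \pmod 2.
\]

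Two vanishing inputs then finish the argument. The non-ramification hypothesis translates, via the unramified constant base change $F_{an}/F_n$ and the identity $L_a = L\cdot F_{an}$, into the statement that no bad place of $E/F_{an}$ ramifies in $L_a/F_{an}$; Lemma~\ref{fracc_lem} then shows the right-hand side above is even. The non-growth of the analytic rank in $L/F_n$, combined with the Weil-restriction argument in the proof of Proposition~\ref{red_prop}, forces $\ord_{s=1} L(E,\rho,s) = 0$ and hence $m_\rho = 0$ via Proposition~\ref{an_rk0&1}. Therefore $m_{\mathbbm{1}} + m_\varepsilon$ is even, and since this sum equals $\corank_{\ZZ_\ell} \Sel_{\ell^\infty}(E/F_{an})$, the $\ell$-parity conjecture holds over $F_{an}$, hence over $K$ by the first reduction. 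The main technical subtlety is the $m_\rho = 0$ step when $a > 1$: in Theorem~\ref{ell_thm} this was immediate because $F_n = K$ and the non-growth hypothesis applied directly to the base of the Kummer extension, whereas here one must carefully track the interaction between $L(E,\rho,s)$, defined over the larger field $K_{an}$, and the $L$-functions over the smaller base $F_n$ where the hypothesis is actually stated.
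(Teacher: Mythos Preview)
Your overall strategy matches the paper's exactly: reduce from $K$ to $F$ via Proposition~\ref{red_prop}(\ref{one_cond}), then to $F_{an}$ via Proposition~\ref{red_prop}(\ref{odd_cond}), apply the Dokchitser machinery to the dihedral extension $L_a/K_{an}$, and feed in the Tamagawa computation and $m_\rho=0$. The paper compresses all of this into the single sentence ``we may assume $a=1$ and $n=1$'', which amounts to base-changing everything to $K_{an}$ so that the degree-$\ell$ extension in the hypothesis \emph{is} the one appearing in the dihedral picture; your write-up simply unpacks this.

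There is, however, a genuine slip in your justification of $m_\rho=0$. You assert that the non-growth of analytic rank in $L/F_n$ forces $\ord_{s=1}L(E,\rho,s)=0$, where $\rho$ is the $(\ell-1)$-dimensional $\QQ_\ell[D_\ell]$-representation over the base $K_{an}$. But $L(E,\rho,s)^2 = L(E/L_a,s)/L(E/F_{an},s)$, whereas the hypothesis controls $L(E/L,s)/L(E/F_n,s)$; passing from the latter to the former involves the constant extension of odd degree $a$, and analytic ranks can genuinely grow there. So this $L$-function vanishing is not available. What \emph{is} available is the Selmer statement: Proposition~\ref{an_rk0&1} applied to $B=\text{Res}_{L/F_n}E/E$ gives $\corank\Sel_{\ell^\infty}(E/L)=\corank\Sel_{\ell^\infty}(E/F_n)$. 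Viewing $X=\Sel_{\ell^\infty}(E/L_a)^\vee\otimes\QQ_\ell$ as a module over the metacyclic group $\tilde G=\Gal(L_a/F_n)=\langle\sigma\rangle\rtimes\Gal(L_a/L)$, this says $\dim X^{\Gal(L_a/L)}=\dim X^{\tilde G}$; since the unique higher-dimensional irreducible $\QQ_\ell[\tilde G]$-module has nonzero $\Gal(L_a/L)$-invariants but zero $\tilde G$-invariants, it cannot occur in $X$, so $\sigma$ acts trivially on $X$ and $m_\rho=0$. This is exactly what the paper's reduction to $a=n=1$ achieves implicitly: once the base is $K_{an}$, the hypothesis is literally about $L_a/F_{an}$ and $m_\rho=0$ is immediate. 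Your final paragraph shows you sensed the issue; the fix is to argue on Selmer coranks rather than on $L(E,\rho,s)$.
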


Remark that there is a large supply of quadratic extensions $F/K$ by Theorem~11.2 in~\cite{ulmer_gnv}.
The main problem here seems to find the extension $L_a/F_{an}$. Theorem~5.2 in~\cite{ulmer_gnv} provides us with many extensions that satisfy all the properties except that we can not guarantee that $L_a/K_{an}$ is dihedral. We first had hopes that Ulmer's proof could be adapted to enforce that $L/K_{n}$ is dihedral. In the notations of~\cite{ulmer_gnv}, we may sketch the problem. Let $D$ be a divisor of large degree as in section 6.2. Then the density (as $n$ grows) of elements in the Riemann-Roch space $H^1(\mathcal{C}\times \FF_{q^n}, \mathcal{O}(D))$ which give rise to a dihedral extension of $F_n$ with respect to the fixed quadratic extension $F/K$ will be tending very fast to 0. So we would need to modify the parameter space $X$ and it is not clear how to find a nice variety parametrising such dihedral extensions.

\begin{proof}
 This is very similar to the proof of Theorem~\ref{ell_thm}. By Proposition~\ref{red_prop}, we may assume that $a=1$ and $n=1$. So $L/K$ is a dihedral extension with group $G$. Let $\rho$ be the irreducible $\QQ_{\ell}[G]$-module of degree $\ell-1$ and let $\varepsilon$ the character corresponding to the quadratic extension $F/K$. The usual relation of induced representation $\Theta$ for $G$, as in Example~3.5 in~\cite{dok_sd}, yields the congruence
 \begin{equation*}
  m_{\mathbbm{1}} + m_{\varepsilon} + m_{\rho} \equiv \ord_{\ell}\Bigl(\frac{C(E/L)}{C(E/F)}\Bigr) \pmod{2}.
 \end{equation*}
 We have $m_{\rho} = 0$ and from Lemma~\ref{fracc_lem}, we know that the assumption that no bad place ramifies in $L/K$ implies that $\frac{C(E/L)}{C(E/F)}$ has even $\ell$-adic valuation. This implies that $m_{\mathbbm{1}} + m_{\varepsilon}$ is even, i.e. the $\ell$-parity is valid for $E/F$. By Proposition~\ref{red_prop} implies that the $\ell$-parity also holds for $E/K$.
\end{proof}

\section*{Acknowledgements}
 We express our gratitude to Tim and Vladimir Dokchitser, Jean Gillibert, Christian Liedtke, James S. Milne, Takeshi Saito,  Ki-Seng Tan, Douglas Ulmer, and the anonymous referee for useful comments on the preliminary versions of this paper.

\bibliographystyle{amsalpha}
\bibliography{parity}

\end{document}